\title{Morse--Bott--Smale chain complex}
\author{Ryuma Orita}
\address[Ryuma ORITA]{Department of Mathematics, Faculty of Science, Niigata University, Niigata 950-2181, Japan}
\email{\href{mailto:orita@math.sc.niigata-u.ac.jp}{orita@math.sc.niigata-u.ac.jp}}
\urladdr{\url{https://ryuma-orita.netlify.app/}}
\author{Kanon Yashiro}
\address[Kanon YASHIRO]{Graduate School of Science and Technology, Niigata University, Niigata 950-2181, Japan}
\email{\href{mailto:yashiro@m.sc.niigata-u.ac.jp}{yashiro@m.sc.niigata-u.ac.jp}}
\urladdr{\url{https://sites.google.com/view/kanon-yashiro/home}}
\subjclass[2020]{Primary 57R70; Secondary 58K05, 55N10, 37D15}
\keywords{Morse homology, Morse--Bott homology, critical points}
\thanks{This work was supported by JSPS KAKENHI Grant Number 21K13787.}
\newtheorem{theorem}{Theorem}[section]
\newtheorem{lemma}[theorem]{Lemma}
\newtheorem{proposition}[theorem]{Proposition}
\newtheorem{corollary}[theorem]{Corollary}
\theoremstyle{definition}
\newtheorem{definition}[theorem]{Definition}
\newtheorem{example}[theorem]{Example}
\theoremstyle{remark}
\newtheorem{remark}[theorem]{Remark}
\newcommand{\ZZ}{\mathbb{Z}}
\newcommand{\RR}{\mathbb{R}}
\newcommand{\CB}{\mathrm{CB}}
\newcommand{\HB}{\mathrm{HB}}
\newcommand{\CM}{\mathrm{CM}}
\newcommand{\HM}{\mathrm{HM}}
\newcommand{\Crit}{\mathrm{Crit}}
\newcommand{\ind}{\operatorname{ind}}
\newcommand{\grad}{\operatorname{grad}}
\newcommand{\Image}{\operatorname{Im}}
\newcommand{\Ker}{\operatorname{Ker}}
\newcommand{\relmiddle}[1]{\mathrel{}\middle#1\mathrel{}}
\newcommand{\dotDelta}{{\vphantom{\Delta}\mathpalette\d@tD@lta\relax}}
\newcommand{\d@tD@lta}[2]{\ooalign{\hidewidth$\m@th#1\mkern-1mu\cdot$\hidewidth\cr$\m@th#1\Delta$\cr}}
\begin{document}

\begin{abstract}
Banyaga and Hurtubise \cite{BH10} defined the Morse--Bott--Smale chain complex
as a quotient of a large chain complex by introducing five degeneracy relations.
However, their five degeneracy relations are in fact redundant \cite{BH25}.
In the present paper, we unify these five conditions into a single degeneracy condition and resolve the issue of the well-definedness of the Morse--Bott--Smale chain complex.
This provides an appropriate definition of the Morse--Bott homology and more computable examples.
Moreover, we show that our chain complex for a Morse--Smale function is quasi-isomorphic to the usual Morse--Smale--Witten chain complex.
As a consequence, we obtain an alternative proof of the Morse Homology Theorem.
\end{abstract}

\maketitle

\tableofcontents


\section{Introduction}\label{section:Introduction}

Morse theory is a powerful tool to reveal the topology of a smooth manifold
by analyzing the isolated critical points of a smooth function, called a Morse function.
A Morse--Bott function generalizes a Morse function
by allowing its critical point set to be a finite disjoint union of connected submanifolds \cite{Bo54}.
Such functions are useful, for instance, in the presence of a group action.
The (co)homology theory for Morse--Bott functions admits several constructions,
such as Austin and Braam’s model \cite{AB95},
Fukaya's model \cite{Fu96},
and the cascades model \cite{Bourgeois2002,Fr03}.
More recently, Zhou \cite{Zhou2024} unified the three methods
and constructed the \textit{minimal} Morse--Bott cochain complex for every Morse--Bott flow category.
We refer the reader to his paper \cite{Zhou2024} for a excellent comparison of these constructions and more details.

Banyaga and Hurtubise \cite{BH10} also defined a chain complex,
which they called the \textit{Morse--Bott--Smale chain complex},
for a Morse--Bott function satisfying a certain transversality condition (Definition \ref{definition:MBS trans.}).
Their chain complex is constructed as a quotient of the smooth singular cubical chain complex of the critical submanifolds,
modulo five degeneracy relations introduced in \cite[Definition 5.9]{BH10}.
An advantage of their construction is that the Morse--Bott--Smale chain complex for a Morse--Smale function
coincides with the Morse--Smale--Witten chain complex (see Section \ref{Section:Morse homology} for the definition).
In the case of a constant function, it coincides with the smooth singular cubical chain complex.
However, since the five degeneracy relations overlap,
it is difficult to compute concrete examples of Morse--Bott homology
and to establish the well-definedness of the boundary operator.
As clarified in \cite{BH25}, their five degeneracy relations are in fact redundant.
For details, see Remark \ref{remark:private_communication}.

In the present paper,
we resolve the issue of well-definedness and unify Banyaga and Hurtubise's degeneracy relations into a single condition (Definition \ref{definition:deg con})
by introducing the notion of a \textit{good representing chain system} (Definition \ref{definition:good rep.sys.}).
Roughly speaking, a good representing chain system is a collection of representative chains chosen for all the spaces under consideration.
This provides an appropriate definition of the Morse--Bott--Smale chain complex and more computable examples,
see Section \ref{section:computation}.
The crucial point is that,
due to the change in the degeneracy conditions,
our chain complex,
which is also called the \textit{Morse--Bott--Smale chain complex},
for a Morse--Smale function \textit{no longer} coincides with the Morse–Smale–Witten chain complex.
However, one can show that these two chain complexes are quasi-isomorphic (Theorem \ref{theorem:Morse_MB_isom}).
Moreover, our Morse--Bott--Smale chain complex for a constant function coincides
with the smooth singular \textit{simplicial} chain complex up to sign (Example \ref{example:Constant func.}).

Furthermore, one can show that
our Morse--Bott homology does not depend on the choices of the Morse--Bott--Smale function
nor the good representing chain system (Theorem \ref{theorem:MB homology theorem}).
This provides another proof of the Morse Homology Theorem (Theorem \ref{theorem:Morse homology Theorem}).


\section*{Acknowledgements}

The authors would like to express their sincere gratitude to Augustin Banyaga and David Hurtubise for carefully reading the first version of the present paper and for engaging in thoughtful and generous discussions with us.


\section{Chain complexes of topological chains}

In this section, we set conventions and notation to define our Morse--Bott--Smale chain complex.
We introduce chain complexes of \textit{abstract/singular topological chains}.
We closely follow the arguments in \cite[Sections 2--4]{BH10}.


\subsection{Abstract topological chains}\label{section:abstract topological chains}

Let $p$ be an integer.
For $p\geq 0$ we fix a family $C_p$ of topological spaces.
Let $S_p$ be the free abelian group generated by the elements of $C_p$, i.e., $S_p=\ZZ[C_p]$.
For $p<0$ or $C_p=\emptyset$ we set $S_p=0$. 

\begin{definition}[{\cite[Definition 4.1]{BH10}}]\label{definition:abs.top.cpx.}
For $p\geq 1$ let $\partial_p\colon S_p \to S_{p-1}$ be a homomorphism satisfying the following conditions:
\begin{enumerate}
    \item For any $P\in C_p$ we have $\partial_p(P) =\sum_k n_k P_k$
    where $n_k=\pm 1$ and $P_k\in C_{p-1}$ is a subspace of $P$ for all $k$.
    \item $\partial_{p-1}\circ\partial_p=0$.
\end{enumerate}
For $p\leq 0$ we set $\partial_p=0$.
Then the pair $(S_*,\partial_*)$ is called a \textit{chain complex of abstract topological chains}.
\end{definition}

\begin{example}\label{example:face abs. simpl. cpx.}
Fix a large positive integer $N$.
Let $\Delta^N\subset\RR^{N+1}$ denote the standard $N$-simplex.
Namely,
\[
    \Delta^N = \left\{\, (t_0,t_1,\ldots,t_N)\in\RR^{N+1} \relmiddle| t_0+t_1+\dots+t_N=1\ \text{and}\ t_i\geq 0\ \text{for all}\ i \,\right\}.
\]
For $p=0$,~1, \dots,~$N$ let $\dotDelta^p$ denote a $p$-face of the standard simplex $\Delta^N$.
Let $\iota\colon\Delta^p\to\dotDelta^p\subset\Delta^N$ be the standard embedding.
We define $C_p^{\Delta}$ to be the set of the $p$-faces of $\Delta^N$
and let $S_p^{\Delta}=\ZZ[C_p^{\Delta}]$.
For $p<0$ or $p>N$ we set $S_p^{\Delta}=0$.
We define a boundary operator
$\partial_p\colon S_p^{\Delta}\to S_{p-1}^{\Delta}$
by the formula
\[
    \partial_p(\dotDelta^p)=\sum_{i=0}^p (-1)^i \iota\left(\left.\iota^{-1}(\dotDelta^p)\right|_{t_i=0}\right),
\]
where $\dotDelta^p\in C^{\Delta}_p$ and $(t_0,\ldots,t_p)\in\Delta^p$. 
Then we deduce that $\partial_{p-1}\circ\partial_p=0$ holds.
Now the pair $(S_*^{\Delta},\partial_*)$ is a chain complex of abstract topological chains.
\end{example}


\subsection{Singular topological chains}\label{section:singular topological chains}

Let $(S_*,\partial_*)$ be a chain complex of abstract topological chains induced from a collection $\{C_p\}_{p\geq 0}$.
Let $X$ be a topological space.

\begin{definition}[{\cite[Definition 4.2]{BH10}}]
Let $p\geq 0$.
A \textit{singular $C_p$-space} in $X$ is a continuous map $\sigma\colon P\to X$ where $P\in C_p$.
Let $S_p(X)$ be the free abelian group generated by the singular $C_p$-spaces in $X$.
For $S_p=0$ or $X=\emptyset$ we set $S_p(X)=0$.
\end{definition}

For $p\geq 1$ the boundary operator $\partial_p\colon S_p\to S_{p-1}$ induces a boundary operator
$\partial_p\colon S_p(X)\to S_{p-1}(X)$ as follows.
For a singular $C_p$-space $\sigma\colon P\to X$
we define
\[
    \partial_p(\sigma)=\sum_k n_k \sigma|_{P_k},
\]
where $\partial_p(P)=\sum_k n_k P_k$.
For $p\leq 0$ we set $\partial_p=0$.
Then the pair $(S_*(X),\partial_*)$ is called a \textit{chain complex of singular topological chains in $X$}.


\subsection{The fibered product of singular topological chains}

Let $P_1$, $P_2$, and $X$ be topological spaces.
Let $\sigma_1\colon P_1\to X$ and $\sigma_2\colon P_2\to X$ be continuous maps.
We recall that the fibered product of $\sigma_1$ and $\sigma_2$ is defined to be
\[
    P_1\times_X P_2 = \left\{\, (x_1, x_2)\in P_1\times P_2 \mid \sigma_1(x_1)=\sigma_2(x_2) \,\right\} .
\]
If $P_1$, $P_2$, and $X$ are smooth manifolds of dimension $p_1$, $p_2$, and $b$, respectively,
and if $\sigma_1$ and $\sigma_2$ are smooth and $\sigma_1$ is transverse to $\sigma_2$,
then the fibered product $P_1\times_X P_2$ is a smooth manifold of dimension $p_1+p_2-b$
(see, e.g., \cite[Lemma 4.5]{BH10}).

For a collection $\{C_p\}_{p\geq 0}$ of families of topological spaces,
an element $P\in C_p$ is said to have \textit{degree} $p$.
Furthermore, if $X$ is a manifold of dimension $b$,
then we associate the degree $p_1+p_2-b$ to the fibered product $P_1\times_X P_2$
where $P_1\in C_{p_1}$ and $P_2\in C_{p_2}$.

\begin{definition}[{\cite[Definition 4.6]{BH10}}]\label{definition:boundary operator to fiber product}
Let $(S_*,\partial_*)$ be the chain complex of abstract topological chains induced from a collection $\{C_p\}_{p\geq 0}$.
Assume that the collection $\{C_p\}_{p\geq 0}$ is closed under taking the fibered product with respect to some collection of maps.
For $i=1$,~2 if $\sigma_i=\sum_k n_{i,k} \sigma_{i,k}\in S_{p_i}(X)$,
where $\sigma_{i,k}\colon P_{i,k}\to X$ is a singular $C_{p_i}$-space for all $k$,
then the \textit{fibered product} of $\sigma_1$ and $\sigma_2$ over $X$ is defined to be
\[
    P_1\times_X P_2 = \sum_{k,\,j} n_{1,k} n_{2,j} P_{1,k}\times_X P_{2,j} ,
\]
where $P_1=\sum_k n_{1,k} P_{1,k}\in S_{p_1}$ and $P_2=\sum_j n_{2,j} P_{2,j}\in S_{p_2}$.
The boundary operator for the fibered product is defined to be
\[
    \partial_{p_1+p_2-b}(P_1\times_X P_2) = \partial_{p_1}(P_1)\times_X P_2 + (-1)^{p_1+b} P_1\times_B\partial_{p_2}(P_2).
\]
If either $\sigma_1=0$ or $\sigma_2=0$,
then we define $P_1\times_X P_2=0$.
\end{definition}

According to \cite[Lemma 4.7]{BH10},
$(P_1\times_{X_1} P_2) \times_{X_2} P_3$ and $P_1\times_{X_1} (P_2\times_{X_2} P_3)$
are the same as abstract topological chains. 
Hence the fibered product of singular topological chains is compatible with its structure as an abstract topological chain complex.
Namely, under the setting of Definition \ref{definition:boundary operator to fiber product},
$(S_*(X),\partial_*)$ becomes a chain complex of singular topological chains.


\section{Morse functions and Morse--Bott functions}

We quickly review the construction of the Morse--Smale--Witten chain complex
and the definition of Morse--Bott functions.
We refer the reader to an excellent book \cite{BH04a} for more details.
Let $(M,g)$ be a closed oriented Riemannian manifold.


\subsection{The Morse--Smale--Witten chain complex}\label{Section:Morse homology}

Let $f\colon M\to\RR$ be a Morse--Smale function on $(M,g)$.
Namely, we assume that $f$ is a Morse function,
and the stable manifolds and the unstable manifolds
(with respect to the negative gradient vector field $-\grad_g{f}$) of all critical points of $f$ intersect transversally.
Let $\Crit(f)$ denote the critical point set of $f$.
Moreover, for each integer $k\geq 0$
let $\Crit_k(f)$ be the set of critical points of Morse index $k$.

For $k=0$,~1, \dots,~$\dim{M}$ we define $\CM_k(f)=\ZZ[\Crit_k(f)]$.
For $k<0$ or $k>\dim{M}$ we set $\CM_k(f)=0$.
We define the boundary operator $\partial^{\mathrm{Morse}}_k\colon\CM_k(f)\to\CM_{k-1}(f)$ by the formula
\[
    \partial^{\mathrm{Morse}}_k(q) = \sum_{p\in\Crit_{k-1}(f)} n(q,p)\, p,
\]
where $q\in\Crit_k(f)$ and $n(q,p)$ is the finite number of negative gradient flow lines from $q$ to $p$ with signs determined by orientations of the unstable manifolds and the given orientation of $M$.

\begin{theorem}[{The Morse Homology Theorem (cf.\ \cite[Theorem 7.4]{BH04a}}]\label{theorem:Morse homology Theorem}
The pair $(\CM_*(f),\partial^{\mathrm{Morse}}_*)$ is a chain complex.
Moreover, for each $k\in\ZZ$ the $k$-th homology $\HM_k(f)=H_k\bigl((\CM_*(f),\partial^{\mathrm{Morse}}_*)\bigr)$
is isomorphic to the singular homology $H_k(M;\ZZ)$ of $M$.
\end{theorem}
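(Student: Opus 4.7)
The plan is to derive Theorem \ref{theorem:Morse homology Theorem} as a corollary of the two deeper results stated later in the paper: the invariance of the simplified Morse--Bott homology under changes in the defining data (Theorem \ref{theorem:MB homology theorem}) and the comparison between the simplified Morse--Bott--Smale chain complex and the Morse--Smale--Witten chain complex for a Morse--Smale function (Theorem \ref{theorem:Morse_MB_isom}). This matches the strategy advertised in the introduction, where the Morse Homology Theorem is obtained as a by-product of the simplified Morse--Bott formalism rather than as an independent analytic statement.

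First, I would verify that $(\CM_*(f),\partial_*^{\mathrm{Morse}})$ is itself a chain complex. The most economical route is to inherit the identity $\partial^{\mathrm{Morse}}\circ\partial^{\mathrm{Morse}}=0$ from the chain complex property of the ambient simplified Morse--Bott--Smale complex via Theorem \ref{theorem:Morse_MB_isom}, after viewing the Morse--Smale function $f$ as a Morse--Bott--Smale function whose critical submanifolds are the isolated points of $\Crit(f)$; alternatively one may quote the standard compactness--and--gluing argument for one-dimensional moduli spaces of broken trajectories from \cite{BH04a}. With the chain complex property in hand, Theorem \ref{theorem:Morse_MB_isom} immediately yields
\[
    \HM_*(f)\cong\HB_*(f),
\]
where $\HB_*(f)$ denotes the simplified Morse--Bott homology of $f$.

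Second, to identify $\HB_*(f)$ with the singular homology $H_*(M;\ZZ)$, I would invoke the invariance theorem (Theorem \ref{theorem:MB homology theorem}) to replace the Morse--Smale function $f$ by a constant function $g\colon M\to\RR$ equipped with any good representing chain system. The unique critical submanifold of $g$ is the whole manifold $M$, so Example \ref{example:Constant func.} identifies the simplified Morse--Bott--Smale chain complex of $g$ with the smooth singular simplicial chain complex of $M$ up to a choice of signs. Since the homology of the smooth singular simplicial complex agrees with the usual singular homology $H_*(M;\ZZ)$, the chain of isomorphisms $\HM_*(f)\cong\HB_*(f)\cong\HB_*(g)\cong H_*(M;\ZZ)$ completes the proof.

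The main obstacle is that essentially all of the substance of the argument is packaged into Theorems \ref{theorem:MB homology theorem} and \ref{theorem:Morse_MB_isom}, whose proofs require establishing the well-definedness of the Morse--Bott differential, constructing explicit chain maps and chain homotopies between the simplified Morse--Bott--Smale complexes associated to different choices of data, and handling the comparison with the Morse--Smale--Witten complex on the level of fibered products. Once those tools are in place, the only remaining technicality specific to this deduction is to note that the sign discrepancy recorded in Example \ref{example:Constant func.} has no effect on homology, since negating a boundary operator produces an isomorphic chain complex on the same underlying group.
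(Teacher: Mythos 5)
Your proposal follows the same strategy as the paper's proof: establish that $(\CM_*(f),\partial_*^{\mathrm{Morse}})$ is a chain complex, invoke Theorem \ref{theorem:Morse_MB_isom} for $\HM_*(f)\cong\HB_*(f,\mathcal{R})$, and then use Theorem \ref{theorem:MB homology theorem} together with the constant-function case (Example \ref{example:Constant func.}, packaged as Corollary \ref{corollary:MB_is_sing}) to identify $\HB_*(f,\mathcal{R})$ with $H_*(M;\ZZ)$. The only slight deviation is in how you handle $\partial^{\mathrm{Morse}}\circ\partial^{\mathrm{Morse}}=0$: the paper isolates this in Lemma \ref{lemma:diagonal_is_MSW} via Proposition \ref{proposition:boundary operator}, which is cleaner than extracting it from Theorem \ref{theorem:Morse_MB_isom} (whose statement already presupposes $\HM_k(f)$ is defined), but both routes go through.
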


Theorem \ref{theorem:Morse homology Theorem} implies that
the Morse homology $\HM_k(f)$ does not depend on the choices of the Morse--Smale function $f$,
the Riemannian metric $g$, nor the orientations of the unstable manifolds.
In Section \ref{section:A proof of the MH thm},
we provide another proof of Theorem \ref{theorem:Morse homology Theorem}.


\subsection{Morse--Bott--Smale functions}

We recall the definition of Morse--Bott--Smale functions,
which are Morse--Bott functions satisfying a certain transversality.

Let $f\colon M\to\RR$ be a smooth function on $M$.
We assume that the critical point set $\Crit(f)$ decomposes into
a finite disjoint union of connected submanifolds $B_1$, \dots,~$B_n$ in $M$,
called \textit{critical submanifolds}.
For any $\alpha=1$, \dots,~$n$ and any $p\in B_{\alpha}$
we have the decomposition
\[
    T_p M \cong T_p B_{\alpha}\oplus\nu_p(B_{\alpha}),
\]
where $\nu_p(B_{\alpha})$ denotes the normal space of $B_{\alpha}$ in $M$ at $p$.
Then the Hessian $\mathrm{Hess}_p(f)\colon T_p M \times T_p M \to \RR$ induces
a symmetric bilinear form on the normal space $\nu_p(B_{\alpha})$,
which is denoted by $\mathrm{Hess}^\nu_p(f)$.

\begin{definition}\label{definition:MB}
A smooth function $f\colon M\to \RR$ is said to be \textit{Morse--Bott}
if the critical point set of $f$ is a finite disjoint union of connected submanifolds of $M$
and for any critical submanifold $B$ the bilinear form $\mathrm{Hess}^\nu_p(f)$ is non-degenerate for all $p \in B$.
\end{definition}

We refer the reader to \cite[Theorem 3.51]{BH04a} and \cite{BH04b} for a proof of the following lemma.

\begin{lemma}[The Morse--Bott Lemma]
Let $f\colon M\to \RR$ be a Morse--Bott function and let $B$ be a critical submanifold.
For any $p \in B$ there exists a coordinate neighborhood $U$ of $p$ and a decomposition of the normal bundle of $B$ restricted to $U$:
$\nu_*(B)|_U = \nu_*^+(B) \oplus \nu_*^-(B)$.
Moreover, we have
\[
    f(x)=f(u,v,w)=f(B)+\|v\|^2-\|w\|^2,
\]
for any $x=(u,v,w)\in U$ where $u\in B$, $v\in\nu_*^+(B)$ and $w\in\nu_*^-(B)$.
\end{lemma}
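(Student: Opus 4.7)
The plan is to prove the Morse--Bott Lemma in three steps: localize via a tubular neighborhood, split the normal bundle into positive and negative Hessian eigenbundles, and then apply a parametrized (fiberwise) Morse lemma to straighten $f$ into the quadratic normal form.

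First, I would invoke the tubular neighborhood theorem. Since $B$ is a closed embedded submanifold of $M$, a neighborhood of $B$ in $M$ is diffeomorphic to a neighborhood of the zero section in the normal bundle $\nu_*(B)$. Restricting to a small chart on $B$ around $p$ and using the exponential map of $g$, I obtain coordinates $(u,\xi)$ on an open neighborhood $U$ of $p$, where $u$ parametrizes a neighborhood of $p$ in $B$ and $\xi \in \nu_u(B)$, with $B\cap U$ corresponding to $\xi=0$. In these coordinates, $f(u,0)=f(B)$ for every $u$, and the tangential Taylor expansion at $\xi=0$ has vanishing linear term since $B$ is critical.

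Second, I would build the splitting $\nu_*(B)|_U = \nu_*^+(B)\oplus\nu_*^-(B)$. Using the metric $g$ on $\nu_u(B)$, the normal Hessian $\mathrm{Hess}^\nu_u(f)$ corresponds to a self-adjoint endomorphism $A_u$ which is non-degenerate for every $u\in B$ by the Morse--Bott hypothesis. The spectral projections onto the positive and negative eigenspaces depend smoothly on $u$ because the spectrum stays uniformly bounded away from zero on a compact neighborhood of $p$; this gives the desired smooth subbundles. After a fiberwise linear change of coordinates that diagonalizes $A_u$ and rescales each eigendirection by $|\lambda|^{-1/2}$, I may assume the quadratic model reads $Q(u,v,w) = f(B)+\|v\|^2-\|w\|^2$ with $v\in\nu_*^+(B)$, $w\in\nu_*^-(B)$, all of this done smoothly in $u$.

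Third, I would deform $f$ to $Q$ via a Moser-type argument. Writing $\xi=(v,w)$, Taylor's theorem with remainder yields $f(u,\xi)-Q(u,\xi)=R(u,\xi)$ where $R$ vanishes to third order in $\xi$ along $B$. Set $f_t = (1-t)Q + tf$ and look for a fiber-preserving, time-dependent vector field $X_t$ (vanishing on $B$) whose flow $\psi_t$ satisfies $\psi_t^* f_t = Q$. This reduces to the infinitesimal equation $\frac{\partial f_t}{\partial t} + X_t(f_t) = 0$, i.e.\ $X_t(f_t) = Q-f$. Since $Q-f$ vanishes to order three at $\xi=0$ and $f_t$ has a non-degenerate fiberwise Hessian along $B$ for every $t\in[0,1]$, one can solve for $X_t$ fiberwise by inverting the Hessian (equivalently, applying the Hadamard-type lemma that writes $Q-f$ as $\langle X_t, \nabla_\xi f_t\rangle$), obtaining a smooth $X_t$ that vanishes to second order on $B$. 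The flow exists on a uniform neighborhood of $B\cap U$, and $\psi_1$ provides the desired coordinate change.

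The main obstacle is the third step: executing the parametrized Morse lemma with smooth dependence on $u\in B$. The fiberwise linear algebra is standard, but one must verify that the solution $X_t$ of the cohomological equation patches into a smooth vector field on a neighborhood of $B\cap U$ in $M$ and that its flow is defined up to time $1$ on such a neighborhood. Both facts follow from the uniform non-degeneracy of the normal Hessian and a compactness argument on $B\cap U$, and a detailed treatment can be found in \cite{BH04b}.
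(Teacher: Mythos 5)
The paper does not prove this lemma: it states it and cites \cite[Theorem~3.51]{BH04a} and \cite{BH04b} for a proof, so there is no in-paper argument to compare against. Your sketch is a correct and standard route -- tubular neighborhood to localize, spectral splitting of the normal Hessian into $\nu^\pm$, and a Moser-type/path-method deformation reducing $f$ to the quadratic model -- and it is essentially the same strategy as in the cited references.

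One small caution in your second step: you speak of ``a fiberwise linear change of coordinates that diagonalizes $A_u$ and rescales each eigendirection by $|\lambda|^{-1/2}$.'' A smooth family of orthonormal eigenbases for $A_u$ need not exist when eigenvalues cross (even if they stay on one side of $0$), so the diagonalization itself is not available smoothly in $u$. What \emph{is} smooth is the pair of spectral projectors onto $\nu^\pm_u$ (your contour-integral argument gives exactly this), and then the positive-definite operators $\pm A_u|_{\nu^\pm_u}$ admit smooth square roots via functional calculus; replacing $\xi$ by $(\sqrt{\pm A_u^\pm})\,\xi$ on each summand reduces the quadratic part to $\|v\|^2-\|w\|^2$ without ever choosing an eigenbasis. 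With that rephrasing the step is airtight, and your Moser-step computation $\tfrac{\partial f_t}{\partial t}+X_t(f_t)=0$, together with the observation that $f_t$ has the same nondegenerate fiberwise Hessian for all $t\in[0,1]$ and that $Q-f$ vanishes to third order in $\xi$, correctly produces a smooth $X_t$ vanishing to second order along $B$ whose time-one flow gives the desired chart.
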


We assume that $f$ is Morse--Bott.
Let $B$ be a critical submanifold and $p\in B$.
By the Morse--Bott lemma, the tangent space $T_pM $ decomposes into the direct sum
\begin{equation}\label{eq:decomp}
    T_p M \cong T_p B \oplus \nu_p^+(B) \oplus \nu_p^-(B).
\end{equation}
Then we define the \textit{Morse--Bott index} $\ind{B}$ of the critical submanifold $B$ to be the dimension of the subspace $\nu_p^-(B)$.

We fix a Riemannian metric $g$ on $M$.
Let $\{\varphi_t\}_{t\in\RR}$ denote the flow of $-\grad_g{f}$.
For $p\in\Crit(f)$ let $W^s(p)$ (resp.\ $W^u(p)$) denote the stable (resp.\ unstable) manifold of $p$.
We also define the stable manifold and the unstable manifold of $B$ by
$W^s(B) = \bigcup_{p\in B} W^s(p)$ and $W^u(B) = \bigcup_{p\in B} W^u(p)$, respectively.

\begin{theorem}[{\cite[Theorem A.9]{AB95}, cf.\ \cite[Theorem 3.3]{BH10}}]\label{theorem:SU mfd for MB}
The stable and unstable manifolds $W^s(B)$ and $W^u(B)$ are
the images of injective immersions $E^+\colon \nu_*^+(B) \to M$ and $E^-\colon \nu_*^-(B) \to M$, respectively.
There are smooth endpoint map $\mathrm{ev}_+\colon W^s(B) \to B$ and beginning map $\mathrm{ev}_-\colon W^u(B) \to B$
given by $\mathrm{ev}_+(x)=\lim_{t\to\infty} \varphi_t(x)$ and $\mathrm{ev}_-(x)=\lim_{t\to -\infty} \varphi_t(x)$,
which have the structure of fiber bundles.
\end{theorem}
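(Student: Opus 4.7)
The plan is to prove this in two stages: first establish the local structure near $B$ using the Morse--Bott lemma, then globalize via the negative gradient flow. I will work with $W^u(B)$ and $E^-$; the statement for $W^s(B)$ and $E^+$ follows by replacing $f$ with $-f$.

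First I would set up the local model. By the Morse--Bott lemma, around each $p\in B$ there is a neighborhood $U$ with coordinates $(u,v,w)\in B\times \nu_p^+(B)\times \nu_p^-(B)$ in which $f=f(B)+\|v\|^2-\|w\|^2$. In this chart the negative gradient flow (with respect to a metric that agrees with the flat metric in these coordinates, which can be arranged after a standard modification of $g$ near $B$) preserves the splitting, fixes $B$ pointwise, contracts $\nu^+$ exponentially, and expands $\nu^-$ exponentially. From this one reads off the local unstable manifold $W^u_{\mathrm{loc}}(B)$: it is precisely the slice $\{v=0\}$, hence an embedded disk bundle over $B$ diffeomorphic to an open neighborhood of the zero section in $\nu_*^-(B)$. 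For a general metric one invokes the stable manifold theorem for normally hyperbolic invariant submanifolds (Hirsch--Pugh--Shub) to obtain a smooth $W^u_{\mathrm{loc}}(B)$ tangent to $\nu_*^-(B)$ along $B$.

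Next I would construct the global immersion $E^-$. Since $M$ is closed, the flow $\{\varphi_t\}$ is complete. Pick a radius function giving a small open disk subbundle $D\subset\nu_*^-(B)$ so that its image under the local diffeomorphism onto $W^u_{\mathrm{loc}}(B)$ lies inside $U$. For each $v\in\nu_*^-(B)$, write $v=\lambda v_0$ with $v_0\in D$ and $\lambda\geq 1$; by the exponential expansion along $\nu^-$, there is a unique $t(\lambda)\geq 0$ with $\varphi_{-t(\lambda)}$ mapping the image of $v_0$ to a point whose local $\nu^-$-coordinate has the correct norm, and setting $E^-(v)=\varphi_{t(\lambda)}\bigl(\text{local embedding of }v_0\bigr)$ yields a well-defined smooth map extending the local embedding. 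Injectivity follows because distinct negative gradient trajectories cannot merge, and because on each fiber $\nu_p^-(B)$ the restriction of $E^-$ parametrizes $W^u(p)$, which is an embedded submanifold diffeomorphic to $\RR^{\ind B}$ by the usual Morse-type argument. The differential of $E^-$ at the zero section is an isomorphism by construction; away from the zero section it is an isomorphism because $\varphi_t$ is a diffeomorphism.

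Finally, the beginning map $\mathrm{ev}_-$ and its fiber bundle structure. For $x\in W^u(B)$, the limit $\lim_{t\to-\infty}\varphi_t(x)$ lands in $B$ because $f$ is bounded below along the backward trajectory and all accumulation points of $\varphi_{-t}(x)$ lie in $\Crit(f)$; since distinct critical submanifolds are separated in $M$ and the trajectory is connected, the $\omega$-limit set is a single point of a single $B_\alpha$. In the local chart $\mathrm{ev}_-$ is just the projection $(u,0,w)\mapsto u$, hence smooth, and under the identification $E^-\colon\nu_*^-(B)\to W^u(B)$ it corresponds to the bundle projection $\nu_*^-(B)\to B$. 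Thus $(W^u(B),\mathrm{ev}_-,B)$ acquires the structure of the vector bundle $\nu_*^-(B)\to B$ via $E^-$. The main obstacle in making this rigorous is the step of globalizing the local stable/unstable manifolds into injectively immersed submanifolds while keeping track of smoothness of $\mathrm{ev}_-$ at points far from $B$; the key point is that flowing by $\varphi_t$ is a diffeomorphism of $M$ that intertwines the local and global pictures, so smoothness and the bundle structure propagate from the Morse--Bott normal form outward.
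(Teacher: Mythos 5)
The paper does not prove this theorem; it is quoted as is from Austin--Braam \cite{AB95} (Theorem~A.9), with a secondary reference to Banyaga--Hurtubise \cite{BH10} (Theorem~3.3), so there is no internal argument here to compare against. Your sketch follows the standard route that one finds in those sources: use the Morse--Bott normal form (or, more robustly, Hirsch--Pugh--Shub normal hyperbolicity) to realize the local unstable manifold as a disk bundle over $B$, then flow outward to obtain the global immersion and transport the bundle structure along the flow. That overall plan is sound and is essentially the argument the cited references carry out.

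There is, however, a soft spot in your last paragraph. You argue that $\lim_{t\to-\infty}\varphi_t(x)$ lands on a single point of a single critical submanifold because the trajectory is connected and distinct critical submanifolds of $f$ are separated in $M$. Connectedness of the limit set together with separation of the $B_\alpha$'s only places the limit set inside \emph{one} critical submanifold; it does not by itself rule out an $\alpha$-limit set that is a positive-dimensional connected subset of $B_\alpha$ (a trajectory could a priori accumulate on an arc of critical points without converging). Single-point convergence for a Morse--Bott gradient flow is a genuine fact, normally derived from the exponential rate estimates built into the normally hyperbolic picture you already invoked, or from a gradient {\L}ojasiewicz-type inequality. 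Alternatively, note that the paper \emph{defines} $W^u(B)=\bigcup_{p\in B}W^u(p)$, so for $x\in W^u(B)$ the single-point limit is built into the membership condition and $\mathrm{ev}_-$ is tautologically well defined; what then still requires proof is that this union agrees with the image of your map $E^-$ (and hence is a manifold) and that $\mathrm{ev}_-$ is smooth, both of which again come from the local normal form. Either way, the connectedness-plus-separation argument is not a complete substitute and should be replaced by one of these; the rest of the sketch is consistent with the cited references.
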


Now we define Morse--Bott--Smale functions.

\begin{definition}\label{definition:MBS trans.}
A Morse--Bott function $f\colon M \to \RR $ is called \textit{Morse--Bott--Smale}
if for any two critical submanifolds $B$, $B'$
and any point $p\in B$ the submanifolds $W^u(p)$ and $W^s(B')$ intersect transversally.
\end{definition}

Let $B$ and $B'$ be critical submanifolds of $f$ of dimensions $b$ and $b'$, respectively.
Then we have $\dim W^u(B) = b + \ind{B}$ and $\dim W^s(B') = \dim{M} - \ind{B'}$.
We set
\[
    W(B,B') = W^u(B) \cap W^s(B').
\]
If $f$ is Morse--Bott--Smale,
then the intersection $W(B,B')$ is a submanifold of $M$ of dimension $\ind{B} - \ind{B'} + b$,
provided that $W(B,B')\neq\emptyset$.
We note that $\dim{W(B,B')}$ is independent of $\dim{B'}$.
Moreover, if $B\neq B'$ and $W(B,B')\neq\emptyset$, then we have $\ind{B}>\ind{B'}$, see \cite[Lemma 3.6]{BH10}.
Therefore, Morse--Bott--Smale functions are \textit{weakly self-indexing} in the sense of \cite{AB95}.
Namely, Morse--Bott index is strictly decreasing along negative gradient flow lines.


\subsection{Compactified moduli spaces}


Let $f\colon M\to\RR$ be a Morse--Bott--Smale function on $(M,g)$.
Let $\{\varphi _t\}_{t\in\RR}$ denote the flow of the negative gradient vector field $-\grad_g{f}$.


\subsubsection{Gluing and compactification}

For any two critical submanifolds $B$ and $B'$ of $f$
the flow $\{\varphi_t\}_{t\in\RR}$ defines a free $\RR$-action on $W(B,B')$.
Let
\[
    \mathcal{M}(B,B') = W(B,B')/\RR
\]
denote the quotient space,
called the moduli space of (negative) gradient flow lines from $B$ to $B'$.
The following theorems are crucial, see \cite[\S A.3]{AB95} and \cite[Theorems 4.8 and 4.9]{BH10}) for details.

\begin{theorem}[Gluing]\label{theorem:Gluing}
Let $B$, $B'$ and $B''$ be critical submanifolds of $f$.
Then there exist $\varepsilon >0$ and an injective local diffeomorphism
\[
    G\colon \mathcal{M}(B,B') \times_{B'} \mathcal{M}(B',B'') \times (0,\varepsilon) \to \mathcal{M}(B,B'')
\]
onto an end of $\mathcal{M}(B,B'')$.
\end{theorem}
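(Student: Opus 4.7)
The plan is to follow the classical pre-gluing plus implicit function theorem strategy, as developed in \cite{AB95} and \cite[Section 4]{BH10}, adapted to the Morse--Bott--Smale setting. Fix a pair of broken trajectories $([\gamma_1],[\gamma_2])$ in the fibered product, with representatives $\gamma_1\colon\RR\to M$ in $W(B,B')$ and $\gamma_2\colon\RR\to M$ in $W(B',B'')$ satisfying $\mathrm{ev}_+(\gamma_1)=\mathrm{ev}_-(\gamma_2)=p\in B'$. Using the Morse--Bott lemma at $p$, pick coordinates $(u,v,w)\in B'\times\nu^+_p(B')\times\nu^-_p(B')$ in a neighborhood $U$ of $p$ in which $f=f(B')+\|v\|^2-\|w\|^2$, so that $\gamma_1$ enters $U$ along the $v$-directions (stable normal) and $\gamma_2$ exits along the $w$-directions (unstable normal). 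For a small parameter $\lambda\in(0,\varepsilon)$, define a pre-glued curve $\gamma_\lambda$ by concatenating a long segment of $\gamma_1$, a carefully chosen neck near $p$ sitting at distance of order $\lambda$ from $B'$ in both normal directions, and a long segment of $\gamma_2$; the natural dictionary is $\lambda\leftrightarrow T(\lambda)$ where $T(\lambda)=-\tfrac{1}{2}\log\lambda$ records the time the trajectory lingers near $B'$.

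Next, deform $\gamma_\lambda$ into an honest negative gradient flow line. Linearize the flow equation along $\gamma_\lambda$ on an appropriately weighted Sobolev space and show that the Morse--Bott--Smale transversality hypothesis, applied separately to the pairs $(B,B')$ and $(B',B'')$, implies that the linearized operator $D_{\gamma_\lambda}$ is surjective with a right inverse $Q_\lambda$ whose norm is controlled uniformly in $\lambda$. A Newton iteration then yields a unique small correction producing a genuine flow line $\gamma'_\lambda$ lying in $W(B,B'')$. Setting $G([\gamma_1],[\gamma_2],\lambda)=[\gamma'_\lambda]\in\mathcal{M}(B,B'')$ defines the desired smooth map, and a dimension count confirms that $G$ is a local diffeomorphism.

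To show that $G$ is injective and maps onto an end, use the uniqueness part of the implicit function theorem: the gluing parameter $\lambda$ encodes the residual freedom left after quotienting by $\RR$, namely how close the glued trajectory comes to breaking at $B'$. Surjectivity onto an end is a compactness argument: any sequence $[\eta_n]\in\mathcal{M}(B,B'')$ escaping every compact set must, after passing to a subsequence, converge in the broken-trajectory sense to some $([\gamma_1],[\gamma_2])$ in the fibered product, and the uniqueness of the implicit function construction then identifies $[\eta_n]$ with $G([\gamma_1],[\gamma_2],\lambda_n)$ for some $\lambda_n\to 0$.

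The main obstacle is the quantitative control of $Q_\lambda$ along the neck. The tangential directions to $B'$ are only weakly attracting/repelling under the flow, so the naive $L^2$ or $W^{1,2}$ norms do not give a uniform inverse. One must work with exponentially weighted Sobolev norms matched to the smallest positive eigenvalues of $\mathrm{Hess}^\nu(f)$ on $B'$, verify that the pre-gluing error on $\gamma_\lambda$ decays in this weighted norm strictly faster than $\|Q_\lambda\|^{-1}$, and then close the contraction estimate. Once this weighted analysis is in place, the Banach fixed point theorem delivers $\gamma'_\lambda$ and smooth dependence on $([\gamma_1],[\gamma_2],\lambda)$, completing the proof.
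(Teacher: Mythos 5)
The paper does not prove Theorem~\ref{theorem:Gluing} at all; immediately before its statement the authors write ``see \cite[\S A.3]{AB95} and \cite[Theorems 4.8 and 4.9]{BH10} for details,'' and then use it as a black box (e.g., to establish the boundary description of $\overline{\mathcal{M}}(B,B'')$ and to motivate Definition~\ref{definition:orientation of fiber product}). So there is no proof in the paper to compare against; the correct comparison is with Austin--Braam's appendix, and your pre-gluing plus Newton-iteration outline is indeed the scheme used there.

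At the level of a sketch, your proposal is sound and hits the essential points: pre-gluing with parameter $\lambda$ and the logarithmic time dictionary, linearization and a uniformly bounded right inverse, a contraction argument producing a genuine trajectory, and surjectivity onto an end via the compactness/convergence to broken trajectories. One small inaccuracy worth fixing: the directions tangent to $B'$ are not ``weakly attracting/repelling'' but genuinely \emph{neutral} (zero eigenvalues of the linearized flow), which is precisely what distinguishes the Morse--Bott gluing problem from the Morse case. The standard remedy in \cite{AB95} is not simply to shrink a single exponential weight below the smallest positive normal eigenvalue, but to separate the Banach space into a normal part (carrying exponential weights keyed to the spectral gap of $\mathrm{Hess}^\nu_pf$) and a finite-dimensional tangential part on which the end conditions are allowed to slide along $B'$; the fibered product over $B'$ in the domain of $G$ encodes exactly this tangential freedom. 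With that adjustment, your argument aligns with the cited proof; filling in the uniform right-inverse estimate along the neck and the global injectivity of $G$ (local uniqueness from the contraction mapping plus a degree/compactness argument) would complete it.
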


\begin{theorem}[Compactification]\label{theorem:Compactification}
Let $B$ and $B'$ be distinct critical submanifolds of $f$.
Then the moduli space $\mathcal{M}(B,B')$ admits a compactification $\overline{\mathcal{M}}(B,B')$,
which consists of all the piecewise gradient flow lines from $B$ to $B'$.
Moreover, $\overline{\mathcal{M}}(B,B')$ is either empty or a compact smooth manifold with corners
of dimension $\ind{B} -\ind{B'}+b-1$.
Furthermore, the beginning and endpoint maps $($Theorem \ref{theorem:SU mfd for MB}$)$ extend to smooth maps
$\mathrm{ev}_-\colon\overline{\mathcal{M}}(B,B') \to B$ and $\mathrm{ev}_+\colon \overline{\mathcal{M}}(B,B') \to B'$,
where the beginning point map $\mathrm{ev}_-$ has the structure of a fiber bundle.
\end{theorem}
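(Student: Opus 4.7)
The plan is to construct $\overline{\mathcal{M}}(B,B')$ stratum by stratum as a union of fibered products of moduli spaces, endow it with the $C^0_{\mathrm{loc}}$ convergence topology, and then apply the gluing map of Theorem \ref{theorem:Gluing} iteratively to produce smooth charts with corners. Concretely, I would set
\[
    \overline{\mathcal{M}}(B,B') = \coprod_{k\geq 1}\,\coprod \mathcal{M}(B_0,B_1)\times_{B_1}\mathcal{M}(B_1,B_2)\times_{B_2}\cdots\times_{B_{k-1}}\mathcal{M}(B_{k-1},B_k),
\]
where the inner disjoint union runs over all finite chains $B=B_0,B_1,\ldots,B_k=B'$ of pairwise distinct critical submanifolds. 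Since $f$ is weakly self-indexing one has $\ind{B}>\ind{B_1}>\cdots>\ind{B_k}=\ind{B'}$, so only finitely many chains occur. Each stratum is a smooth manifold because the fibered products are transverse by the Morse--Bott--Smale condition.

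The topology would be the usual one in which a sequence $[\gamma_n]\in\mathcal{M}(B,B')$ converges to a broken trajectory $([\gamma^1],\ldots,[\gamma^k])$ if, after suitable time-shifts, the pieces of $\gamma_n$ converge uniformly on compact subsets of $\RR$ to each $[\gamma^j]$. Compactness then follows by a standard Arzel\`a--Ascoli argument applied to the Lipschitz flow $\{\varphi_t\}$: any sequence in $\mathcal{M}(B,B')$ has a subsequence whose reparameterizations converge on compacta of $\RR$, and the limit is either an element of $\mathcal{M}(B,B')$ itself or a broken trajectory through intermediate critical submanifolds. The weakly self-indexing property prevents the breaking from accumulating indefinitely, so the strata are indexed by finite strictly decreasing chains of Morse--Bott indices.

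The smooth structure with corners would come from iterating Theorem \ref{theorem:Gluing}. Near a broken trajectory in the stratum indexed by $(B_0,\ldots,B_k)$, successive applications of $G$ yield a local parametrization
\[
    \mathcal{M}(B_0,B_1)\times_{B_1}\cdots\times_{B_{k-1}}\mathcal{M}(B_{k-1},B_k)\times [0,\varepsilon)^{k-1}\longrightarrow\overline{\mathcal{M}}(B,B'),
\]
whose image is an open neighborhood and in which the $j$-th collar coordinate records the $j$-th gluing parameter; the corner stratum of codimension $j$ is exactly the locus where $j$ of these parameters vanish. A dimension count using $\dim\mathcal{M}(B_i,B_{i+1})=\ind{B_i}-\ind{B_{i+1}}+\dim{B_i}-1$, together with the reduction by $\dim{B_i}$ at each fibered-product node, telescopes to $\ind{B}-\ind{B'}+b-1$ on the top stratum, and the collars restore the same dimension on the deeper strata.

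The evaluation maps $\mathrm{ev}_\pm$ would extend to a broken trajectory by taking the beginning point of its first piece and the endpoint of its last piece. Continuity across strata is automatic from the definition of gluing, and smoothness follows because the charts above are smooth. The fiber-bundle structure of $\mathrm{ev}_-\colon\overline{\mathcal{M}}(B,B')\to B$ is inherited from that of $\mathrm{ev}_-\colon W^u(B)\to B$ in Theorem \ref{theorem:SU mfd for MB} by quotienting out the free $\RR$-action and gluing trivializations fiberwise across the corner strata. The main obstacle will be verifying the manifold-with-corners structure at deeply broken trajectories, where several gluing parameters interact simultaneously; the key technical input is the associativity and smoothness of iterated gluing, which ultimately rests on the exponential convergence of gradient trajectories to critical submanifolds provided by the Morse--Bott lemma.
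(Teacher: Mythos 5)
The paper does not prove Theorem~\ref{theorem:Compactification}; it is quoted directly from \cite[\S A.3]{AB95} and \cite[Theorems~4.8 and 4.9]{BH10}, so there is no in-paper proof to compare against. Your sketch follows the standard route that those references take: stratify by broken-trajectory type, topologize by reparameterized $C^0_{\mathrm{loc}}$ convergence, get sequential compactness from an Arzel\`a--Ascoli argument combined with weak self-indexing, obtain corner charts by iterating the gluing map of Theorem~\ref{theorem:Gluing}, and read off the dimension from a telescoping count. The dimension arithmetic in your sketch is correct.

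Two places, however, are gaps rather than details, and you should not defer them. First, your corner charts have domain $\mathcal{M}(B_0,B_1)\times_{B_1}\cdots\times_{B_{k-1}}\mathcal{M}(B_{k-1},B_k)\times[0,\varepsilon)^{k-1}$, with the \emph{uncompactified} pieces; but whenever $\ind B_i-\ind B_{i+1}\ge 2$ the stratum $\mathcal{M}(B_i,B_{i+1})$ itself fails to be closed in $\overline{\mathcal{M}}(B,B')$, so these charts do not cover a neighborhood of trajectories that break in more than $k$ places. The domain must use the already-compactified fibered products $\overline{\mathcal{M}}(B_i,B_{i+1})$, and the whole construction has to be run by induction on the total relative index $\ind B-\ind B'$. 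Second, the gluing map in Theorem~\ref{theorem:Gluing} is only stated on the open interval $(0,\varepsilon)$; its smooth extension to $[0,\varepsilon)$ that sends the face $\{0\}$ to the broken stratum is precisely what makes $\overline{\mathcal{M}}(B,B')$ a smooth manifold with corners, so invoking it as given is circular. This extension is where the analytic content lives: one needs a uniform exponential-decay estimate for gradient trajectories near each $B_i$ (coming from the Morse--Bott lemma and the spectral gap of $\mathrm{Hess}^\nu$), both to show the gluing parameter extends $C^\infty$ to $0$ and to rule out non-convergent subsequences in the Arzel\`a--Ascoli step. Your final paragraph correctly names these as the obstacles, but a proof of the theorem has to discharge them rather than record them, and it is exactly for this reason that the paper cites \cite{AB95,BH10} instead of reproving the statement.
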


For any two critical submanifolds $B$ and $B'$,
define a partial ordering $B\succ B'$  if $B\neq B'$ and there exists a negative gradient flow line from $B$ to $B'$.
Note that if $B\succ B'$ and $B'\succ B''$, then $B\succ B''$ by Theorem \ref{theorem:Gluing}.
Moreover, a compactified moduli space $\overline{\mathcal{M}}(B,B'')$ can be described as 
\[
    \overline{\mathcal{M}}(B,B'') = \mathcal{M}(B,B'') \cup \bigcup_{B\succ B'\succ B'' } \overline{\mathcal{M}}(B,B') \times_{B'} \overline{\mathcal{M}}(B',B'').
\]
Therefore, $\overline{\mathcal{M}}(B,B'')$ consists of all \textit{broken} gradient flow lines from $B$ to $B''$.
For a topology on $\overline{\mathcal{M}}(B,B'')$ which is compatible with the topology of fibered products induced by the product topology, we refer the reader to \cite[Definition 4.10]{BH10}.


\subsubsection{Compactified moduli spaces as abstract topological chains}

We define the structure of a chain complex of abstract topological chains for the collection of fibered products of compactified moduli spaces.

Let $i=0$,~1, \dots,~$\dim{M}$.
Let $B_i$ be the set of critical points of $f$ of Morse--Bott index $i$.
Namely, the set $B_i$ is the disjoint union of critical submanifolds of Morse--Bott index $i$.
To simplify the notation,
we assume that for each $i=0$,~1, \dots,~$\dim{M}$ the components of $B_i$ have the same dimension.
Let $b_i=\dim{B_i}$.

\begin{definition}[{\cite[Definition 4.11]{BH10}}]\label{definition:deg of moduli}
Let $j=0$,~1, \dots,~$i$ and consider the compactified moduli space $\overline{\mathcal{M}}(B_i,B_{i-j})$.
We define the degree of $\overline{\mathcal{M}}(B_i,B_{i-j})$ to be 
$\deg{\overline{\mathcal{M}}(B_i,B_{i-j})}=j+b_i-1$.
Moreover, we define the boundary of $\overline{\mathcal{M}}(B_i,B_{i-j})$ by the (component-wise) formal sum
\[
    \partial_{j+b_i-1}\bigl(\overline{\mathcal{M}}(B_i,B_{i-j})\bigr)%
    = (-1)^{i+b_i} \sum_{i-j<n<i} \overline{\mathcal{M}}(B_i,B_n) \times_{B_n} \overline{\mathcal{M}}(B_n,B_{i-j}),
\]
where the fibered product is taken over the beginning and endpoint maps $\mathrm{ev}_-$ and $\mathrm{ev}_+$
(see Theorem \ref{theorem:Compactification}).
If $B_n=\emptyset$,
then we set $\overline{\mathcal{M}}(B_i,B_n)=\overline{\mathcal{M}}(B_n,B_{i-j})=0$.
Furthermore, the boundary operator extends to fibered products of compactified moduli spaces
via Definition \ref{definition:boundary operator to fiber product}.
\end{definition}

Let $p$ be an integer.
For $p\geq 0$ let $C_p^{\mathcal{M}}$ be the set of the $p$-dimensional connected components of compactified moduli spaces
and the $p$-dimensional connected components of fibered products of the form
\[
    \overline{\mathcal{M}}(B_{i_1},B_{i_2}) \times_{B_{i_2}} \overline{\mathcal{M}}(B_{i_2},B_{i_3}) \times_{B_{i_3}} \cdots \times_{B_{i_{n-1}}} \overline{\mathcal{M}}(B_{i_{n-1}},B_{i_n}),
\]
where $\dim{M} \geq i_1 > i_2 > \cdots > i_n \geq 0$
and the fibered product is taken with respect to the beginning and endpoint maps $\mathrm{ev}_-$ and $\mathrm{ev}_+$.
Let $S_p^{\mathcal{M}} = \ZZ[C_p^{\mathcal{M}}]$.
For $p<0$ or $C_p^{\mathcal{M}} = \emptyset$ we set $S_p^{\mathcal{M}} = 0$. 

\begin{lemma}[{\cite[Lemma 4.12]{BH10}}]\label{lemma:boundary operator on compactified moduli space}
The pair $(S_*^{\mathcal{M}},\partial_*)$ is a chain complex of abstract topological chains.
\end{lemma}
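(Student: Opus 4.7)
The plan is to verify the two defining conditions of Definition \ref{definition:abs.top.cpx.} for the pair $(S_*^{\mathcal{M}},\partial_*)$. Condition (i) is essentially structural and follows from Theorem \ref{theorem:Compactification}, while condition (ii), i.e.\ $\partial^2=0$, requires a careful sign bookkeeping argument based on the fact that the codimension-$2$ strata of the compactified moduli spaces correspond to twice-broken gradient trajectories, each appearing in two different ways with opposite signs.

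For condition (i), I would first treat the case of a generator of the form $P=\overline{\mathcal{M}}(B_i,B_{i-j})\in C_{j+b_i-1}^{\mathcal{M}}$. The coefficient in Definition \ref{definition:deg of moduli} is $(-1)^{i+b_i}=\pm 1$, as required. Each summand $\overline{\mathcal{M}}(B_i,B_n)\times_{B_n}\overline{\mathcal{M}}(B_n,B_{i-j})$ has dimension $j+b_i-2$ by a direct count using $\dim\overline{\mathcal{M}}(B_i,B_n)=i-n+b_i-1$, $\dim\overline{\mathcal{M}}(B_n,B_{i-j})=n-(i-j)+b_n-1$, and the fibered-product dimension formula over $B_n$; moreover, by Theorem \ref{theorem:Compactification} and the stratified decomposition of $\overline{\mathcal{M}}(B_i,B_{i-j})$ displayed just before the lemma, each such fibered product is a codimension-$1$ stratum of $P$, hence a subspace of $P$ lying in $C_{j+b_i-2}^{\mathcal{M}}$. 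For a general element of $C_p^{\mathcal{M}}$, which is by definition a fibered product of compactified moduli spaces, condition (i) propagates automatically through the Leibniz rule of Definition \ref{definition:boundary operator to fiber product}.

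For condition (ii), the calculation reduces by the same Leibniz rule to the case of a single generator $\overline{\mathcal{M}}(B_i,B_{i-j})$. Applying $\partial$ once yields
\[
    (-1)^{i+b_i}\sum_{i-j<n<i}\overline{\mathcal{M}}(B_i,B_n)\times_{B_n}\overline{\mathcal{M}}(B_n,B_{i-j}),
\]
and applying $\partial$ again, via Definition \ref{definition:boundary operator to fiber product}, produces twice-broken products
\[
    T(i,m,n,i-j)=\overline{\mathcal{M}}(B_i,B_m)\times_{B_m}\overline{\mathcal{M}}(B_m,B_n)\times_{B_n}\overline{\mathcal{M}}(B_n,B_{i-j})
\]
for all chains $i>m>n>i-j$. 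Each such $T(i,m,n,i-j)$ arises in exactly two ways in $\partial^2(\overline{\mathcal{M}}(B_i,B_{i-j}))$: first, from the summand indexed by $n$ when the Leibniz rule differentiates the left factor $\overline{\mathcal{M}}(B_i,B_n)$ and splits it at the intermediate $B_m$; second, from the summand indexed by $m$ when the Leibniz rule differentiates the right factor $\overline{\mathcal{M}}(B_m,B_{i-j})$ and splits it at the intermediate $B_n$.

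The main obstacle is to verify that these two contributions carry opposite signs. The first contribution has sign $(-1)^{i+b_i}(-1)^{i+b_i}=+1$, while the second has sign $(-1)^{i+b_i}(-1)^{\deg\overline{\mathcal{M}}(B_i,B_m)+b_m}(-1)^{m+b_m}$ with $\deg\overline{\mathcal{M}}(B_i,B_m)=i-m+b_i-1$; a direct parity check shows the exponents differ by an odd integer, so the two cancel. Summing over all triples $(m,n)$ with $i>m>n>i-j$ then yields $\partial_{p-1}\circ\partial_p(\overline{\mathcal{M}}(B_i,B_{i-j}))=0$. The general case of an arbitrary generator in $C_p^{\mathcal{M}}$ follows by induction on the number of fibered factors, using $\partial^2=0$ on each factor together with the standard quadratic Leibniz identity built into Definition \ref{definition:boundary operator to fiber product}; the identification of iterated fibered products invoked by \cite[Lemma 4.7]{BH10} ensures the bracketing does not obstruct the argument.
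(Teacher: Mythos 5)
Your proof is correct and follows the expected route: the paper itself gives no proof here, simply citing Banyaga--Hurtubise's Lemma 4.12, and your argument is the standard one that source uses — verify the structural condition via the stratified boundary decomposition and the dimension count, then verify $\partial^2=0$ by pairing the two appearances of each twice-broken fibered product and checking sign cancellation, with the quadratic Leibniz identity handling general generators. Your parity check is in order: the first contribution carries sign $(-1)^{2(i+b_i)}=+1$ and the second $(-1)^{i+b_i}(-1)^{(i-m)+b_i-1+b_m}(-1)^{m+b_m}=(-1)^{2i+2b_i+2b_m-1}=-1$, so the two copies of each $T(i,m,n,i-j)$ indeed cancel.
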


We need the following result for our purpose, see Remark \ref{remark:fibered_product_ori_corners}.

\begin{lemma}[{\cite[Lemma 5.21]{BH10}}]\label{lemma:fiber product is closed on Man(smooth, cpt, with corners)}
Let $B$ and $B'$ be critical submanifolds.
Let $P$ be a compact smooth manifold with corners.
If $\sigma \colon P \to B$ is a smooth map, then the fibered product $P \times _{B} \overline{\mathcal{M}}(B,B')$ is a compact smooth manifold with corners.
\end{lemma}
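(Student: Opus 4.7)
The strategy is to exploit the fiber bundle structure of the beginning-point map $\mathrm{ev}_-\colon\overline{\mathcal{M}}(B,B')\to B$ provided by Theorem \ref{theorem:Compactification}. Since $\mathrm{ev}_-$ is a fiber bundle, the fibered product $P\times_B\overline{\mathcal{M}}(B,B')$ can be identified, as a set, with the total space of the pullback bundle $\sigma^{*}\overline{\mathcal{M}}(B,B')\to P$. The task then splits into showing that this pullback carries the structure of a smooth manifold with corners and that it is compact.

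First, I would choose an open cover $\{U_\alpha\}$ of $B$ over which $\mathrm{ev}_-$ trivializes as $\mathrm{ev}_-^{-1}(U_\alpha)\cong U_\alpha\times F$, where $F$ denotes a typical fiber. Since $\overline{\mathcal{M}}(B,B')$ is a compact smooth manifold with corners and $F$ is a closed subspace cut out by a submersion, $F$ itself inherits the structure of a compact smooth manifold with corners. Pulling back along $\sigma$, I would obtain an open cover $\{\sigma^{-1}(U_\alpha)\}$ of $P$ together with local diffeomorphisms
\[
    \bigl(P\times_B\overline{\mathcal{M}}(B,B')\bigr)\cap\bigl(\sigma^{-1}(U_\alpha)\times\mathrm{ev}_-^{-1}(U_\alpha)\bigr) \xrightarrow{\ \cong\ } \sigma^{-1}(U_\alpha)\times F.
\]
The right-hand side is a product of manifolds with corners, hence a manifold with corners, and the transition maps between these trivializations are restrictions of the transition maps of the bundle $\mathrm{ev}_-$, so they are diffeomorphisms in the corner category. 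This endows $P\times_B\overline{\mathcal{M}}(B,B')$ with the desired smooth manifold-with-corners structure.

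For compactness, I would note that $P\times_B\overline{\mathcal{M}}(B,B')$ is the preimage of the closed diagonal $\Delta_B\subset B\times B$ under the continuous map $(\sigma,\mathrm{ev}_-)\colon P\times\overline{\mathcal{M}}(B,B')\to B\times B$. Thus it is closed in the compact space $P\times\overline{\mathcal{M}}(B,B')$ and therefore compact.

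The main obstacle, in my view, is not a geometric one but a bookkeeping issue: I need to read the fiber bundle conclusion of Theorem \ref{theorem:Compactification} as a statement in the category of smooth manifolds with corners, so that the trivializations above are smooth with corners and the local product structures glue consistently. Once this is in place, no further transversality argument is required, since a fiber bundle projection is automatically transverse to every smooth map and the local product charts supply the manifold-with-corners structure essentially for free.
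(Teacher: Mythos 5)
The paper does not give a proof of this lemma; it simply cites \cite[Lemma~5.21]{BH10} and treats the statement as a black box. So there is no internal argument to compare against, and I can only assess your proposal on its own merits.

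Your argument is correct in its essentials, and the strategy is the natural one: use the fiber-bundle structure of $\mathrm{ev}_-$ asserted in Theorem~\ref{theorem:Compactification} to identify $P\times_B\overline{\mathcal{M}}(B,B')$ with the total space of the pullback bundle $\sigma^*\overline{\mathcal{M}}(B,B')\to P$, observe that local trivializations present it as products $\sigma^{-1}(U_\alpha)\times F$ of manifolds with corners glued by corner-preserving transition maps, and get compactness by exhibiting it as a closed subset of the compact space $P\times\overline{\mathcal{M}}(B,B')$. Both halves of the argument go through, and your remark that no separate transversality hypothesis is needed --- because a fiber bundle projection is transverse to every smooth map --- is also correct; this is precisely why the lemma holds for arbitrary smooth $\sigma$.

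One small imprecision worth tightening: you justify that the typical fiber $F$ is a compact manifold with corners by saying it is "cut out by a submersion." For maps between manifolds with corners, preimages of regular values are not automatically manifolds with corners without additional conditions on how the map meets the boundary strata, so this phrasing invites a genuine worry. Fortunately you do not need that route at all. Once one reads the conclusion of Theorem~\ref{theorem:Compactification} as saying that $\mathrm{ev}_-\colon\overline{\mathcal{M}}(B,B')\to B$ is a locally trivial fiber bundle in the category of smooth manifolds with corners (over a base $B$ that is a boundaryless closed manifold), the fiber $F$ is a manifold with corners by definition of such a bundle, and its compactness follows since $\mathrm{ev}_-^{-1}(b)$ is a closed subset of the compact total space $\overline{\mathcal{M}}(B,B')$. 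You correctly identify this reading of Theorem~\ref{theorem:Compactification} as the crux; it is the one nontrivial input, and with it the rest is routine.
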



\subsubsection{Orientations}

In the rest of the section,
we discuss orientations on compactified moduli spaces
(see also \cite[Section 5.2]{BH10}).
We assume that all the critical submanifolds of $f$
and their negative normal bundles are oriented.
Let $B$ be a critical submanifold.
First we orient the positive normal bundle $\nu^+(B)$ by the decomposition \eqref{eq:decomp}
and orient the stable and unstable manifolds $W^s(B)$, $W^u(B)$
(see Theorem \ref{theorem:SU mfd for MB}).
Moreover, for any two critical submanifolds $B$ and $B'$
we orient $W(B,B')=W^u(B)\cap W^s(B')$ by the decomposition
\begin{align*}
    T_x M%
    &\cong T_x W(B,B') \oplus \nu_x\bigl(W(B,B')\bigr) \\
    &\cong T_x W(B,B') \oplus \nu_x\bigl(W^s(B')\bigr) \oplus \nu_x \bigl(W^u(B)\bigr).
\end{align*}
Then we orient the moduli space $\mathcal{M}(B,B')$
by the decomposition
\[
    T_x W(B,B') = \mathrm{span}\bigl(-(\grad_g{f})_x\bigr) \oplus T_x \mathcal{M}(B,B').
\]
By Theorem \ref{theorem:Gluing},
the boundary of the compactified moduli space $\overline{\mathcal{M}}(B,B')$ consists of fibered products of compactified moduli spaces.
We orient these fibered products as follows.

\begin{definition}[{\cite[Definition 5.7]{BH10}}]\label{definition:orientation of fiber product}
Let $X$ be an oriented manifold.
Let $P_1$ and $P_2$ be oriented manifolds with corners.
Let $\sigma_1\colon P_1\to X$ and $\sigma_2\colon P_2\to X$ be smooth maps.
Assume that $\sigma_1$ and $\sigma_2$ intersect transversally and stratum transversally
(see \cite[Section 5.5]{BH10} or \cite{Ni82} for the definition).
Then we define an orientation on the manifold $P_1\times_X P_2$ with corners
by the formula
\[
    (-1)^{\dim{X}\dim{P_2}} T_x(P_1 \times_X P_2) \oplus (\sigma_1\times\sigma_2)^*\bigl(\nu_*(\Delta_X)\bigr) = T_*(P_1\times  P_2),
\]
where $\Delta_X$ is the diagonal of the product $X\times X$.
\end{definition}

This definition states that the gluing maps are orientation reversing,
which can be deduced by comparing the orientations provided above using \cite[Proposition 2.7]{Br93}.

\begin{lemma}[{\cite[Lemma 5.8]{BH10}}]\label{lemma:orientations are associative}
Under the setting of Definition \ref{definition:orientation of fiber product},
the orientations on fibered products of transverse intersections of smooth manifolds with corners is associative, i.e.,
\[
    (P_1 \times_{X_1} P_2) \times_{X_2} P_3 = P_1 \times_{X_1} (P_2 \times_{X_2} P_3)
\]
as oriented manifolds with corners.
\end{lemma}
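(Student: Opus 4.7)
The plan is to prove Lemma \ref{lemma:orientations are associative} pointwise, by comparing the oriented vector spaces produced by iterating the defining formula of Definition \ref{definition:orientation of fiber product} in the two different orders. The underlying equality of the two triple fibered products as manifolds with corners is already supplied by \cite[Lemma 4.7]{BH10}, invoked in the excerpt right after Definition \ref{definition:boundary operator to fiber product}; the content of the lemma therefore reduces to a sign computation performed at a single arbitrary point.

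Fix a point $(y_1,y_2,y_3)$ in the triple fibered product and set $p_i=\dim P_i$, $x_j=\dim X_j$. I will express both orientations as coefficients relative to the common reference space $TP_1\oplus TP_2\oplus TP_3$ with its product orientation, after absorbing the two normal-bundle factors $\nu(\Delta_{X_1})$ and $\nu(\Delta_{X_2})$ on a uniform side. For the left-hand side, Definition \ref{definition:orientation of fiber product} applied first to $P_1\times_{X_1}P_2$ and then to $(P_1\times_{X_1}P_2)\times_{X_2}P_3$ gives
\begin{align*}
    T(P_1\times_{X_1}P_2)\oplus \nu(\Delta_{X_1}) &= (-1)^{x_1 p_2}\,TP_1\oplus TP_2,\\
    T\bigl((P_1\times_{X_1}P_2)\times_{X_2}P_3\bigr)\oplus \nu(\Delta_{X_2}) &= (-1)^{x_2 p_3}\,T(P_1\times_{X_1}P_2)\oplus TP_3.
\end{align*}
Substituting the first into the second, moving $\nu(\Delta_{X_1})$ past $TP_3$ (Koszul sign $(-1)^{x_1 p_3}$) and $\nu(\Delta_{X_2})$ past $\nu(\Delta_{X_1})$ (Koszul sign $(-1)^{x_1 x_2}$), one obtains
\[
    T\bigl((P_1\times_{X_1}P_2)\times_{X_2}P_3\bigr)\oplus \nu(\Delta_{X_1})\oplus \nu(\Delta_{X_2}) = \varepsilon_L\,(TP_1\oplus TP_2\oplus TP_3),
\]
with $\varepsilon_L=(-1)^{x_1 p_2 + x_2 p_3 + x_1 p_3 + x_1 x_2}$.

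The parallel two-step computation for the right-hand side invokes $\dim(P_2\times_{X_2}P_3)=p_2+p_3-x_2$ in the outer formula, and yields the same identity with sign $\varepsilon_R=(-1)^{x_1(p_2+p_3-x_2)+x_2 p_3}$. Since $-x_1 x_2\equiv x_1 x_2\pmod{2}$, one checks that $\varepsilon_L=\varepsilon_R$; cancelling the common normal-bundle summands then shows that the two orientations on the triple fibered product agree at $(y_1,y_2,y_3)$, and hence everywhere by continuity.

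I expect the main obstacle to be purely the bookkeeping: pinning down precisely how the prefactor in Definition \ref{definition:orientation of fiber product} attaches to the direct sum, choosing a consistent ordering of summands at each step, and tracking the Koszul signs from the reshufflings. No additional geometric input is needed beyond the transversality and stratum-transversality hypotheses already imposed in Definition \ref{definition:orientation of fiber product}, which by Lemma \ref{lemma:fiber product is closed on Man(smooth, cpt, with corners)} (and its obvious iteration) ensure that every intermediate fibered product is itself a smooth manifold with corners, so that the defining formula applies at each stage.
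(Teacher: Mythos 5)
The paper does not actually prove this lemma: it imports it verbatim from \cite[Lemma~5.8]{BH10}. Your sign-chase is the standard way to establish it, and it is correct as written. A few small checks to confirm the bookkeeping: applying Definition~\ref{definition:orientation of fiber product} twice on the left produces, after appending $\nu(\Delta_{X_1})$ to the right and then commuting $\nu(\Delta_{X_1})$ past $TP_3$ (sign $(-1)^{x_1 p_3}$) on the right-hand side, and commuting $\nu(\Delta_{X_2})$ past $\nu(\Delta_{X_1})$ (sign $(-1)^{x_1 x_2}$) on the left-hand side, exactly the $\varepsilon_L$ you state; the parallel computation for the right association uses $\dim(P_2\times_{X_2}P_3)=p_2+p_3-x_2$ in the prefactor of the outer application and requires no reshuffling, giving $\varepsilon_R=(-1)^{x_1(p_2+p_3-x_2)+x_2p_3}$. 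Since $x_1x_2\equiv -x_1x_2\pmod 2$, indeed $\varepsilon_L=\varepsilon_R$, and because the underlying unoriented triple fibered products coincide by \cite[Lemma~4.7]{BH10}, cancelling the (common, identically oriented) normal-bundle summands gives equality of oriented manifolds. One minor point of presentation: Lemma~\ref{lemma:fiber product is closed on Man(smooth, cpt, with corners)} is phrased specifically for fibered products against compactified moduli spaces, while the statement at hand is about general transverse pairs of manifolds with corners; the needed fact that transverse, stratum-transverse fibered products of manifolds with corners are manifolds with corners is the general principle behind that lemma (cf.\ \cite[Section~5.5]{BH10}), so the citation is directionally right but slightly overspecific.
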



\section{Morse--Bott homology}\label{section:MB homology}

Let $(M,g)$ be a closed oriented Riemannian manifold.
Let $f\colon M\to\RR$ be a Morse--Bott--Smale function on $(M,g)$.
We assume that all the critical submanifolds of $f$ and their negative normal bundles are oriented.
In this section, we define the Morse--Bott--Smale chain complex $(\CB_*(f),\bm{\partial}_*)$ for $f$,
which is a quotient complex of a larger chain complex $(\widetilde{\CB}_*(f),\bm{\partial}_*)$.

For $i=0$,~1, \dots,~$\dim{M}$ let $B_i\subset \Crit(f)$ be the set of critical points of index $i$ and let $N>\dim{M}$.
To simplify the notation,
we assume that the components of $B_i$ have the same dimension.


\subsection{Larger chain complex}\label{subsection:simplicial MBS chain complex}

For any $p\geq 0$
let $C^f_p$ be the set consisting of $p$-faces of the $N$-simplex $\Delta^N$
and $p$-dimensional connected components of fibered products of the form
\[
    \dotDelta^q%
    \times_{B_{i_1}} \overline{\mathcal{M}}(B_{i_1},B_{i_2})%
    \times_{B_{i_2}} \overline{\mathcal{M}}(B_{i_2},B_{i_3})%
    \times_{B_{i_3}} \dots%
    \times_{B_{i_{n-1}}} \overline{\mathcal{M}}(B_{i_{n-1}},B_{i_n}),
\]
where $\dotDelta^q$ is a $q$-face of $\Delta^N$ for some $q\leq p$, and $\dim{M} \geq i_1 > i_2 > \cdots > i_n \geq 0$.
Here the fibered products are taken with respect to a smooth map $\sigma\colon \dotDelta^q\to B_{i_1}$
and the beginning and endpoint maps $\mathrm{ev}_-$ and $\mathrm{ev}_+$.
Let $S^f_p=\ZZ[C^f_p]$.
For $p<0$ or $C^f_p=\emptyset$ we set $S^f_p=0$.
Let $\partial\colon S^f_p\to S^f_{p-1}$ denote the boundary operator defined by
Example \ref{example:face abs. simpl. cpx.},
Definition \ref{definition:boundary operator to fiber product}, and
Definition \ref{definition:deg of moduli}.
For instance,
\[
    \partial\left(\dotDelta^q \times_B \overline{\mathcal{M}}(B,B')\right)%
    =\partial(\dotDelta^q) \times_B \overline{\mathcal{M}}(B,B') + (-1)^q \dotDelta^q \times_B \partial\bigl(\overline{\mathcal{M}}(B,B')\bigr),
\]
where $B$ and $B'$ are critical submanifolds.

\begin{remark}
In \cite[Section 5.1]{BH10},
Banyaga and Hurtubise considered faces of the cube $[0,1]^N$ instead of faces of the standard simplex $\Delta^N$.
\end{remark}

\begin{remark}\label{remark:fibered_product_ori_corners}
Since any face of $\Delta^N$ is a compact smooth manifolds with corners,
each element of $C^f_p$ is also a compact smooth manifold with corners
by Lemma \ref{lemma:fiber product is closed on Man(smooth, cpt, with corners)}.
Furthermore, due to Lemma \ref{lemma:orientations are associative},
one can define a well-defined orientation on each element of $C^f_p$.
\end{remark}

For $i=0$,~1, \dots,~$\dim{M}$
we consider the set $B_i\subset\Crit(f)$ of critical points of index $i$.
As in Section \ref{section:singular topological chains},
the chain complex $(S^f_*,\partial_*)$ of abstract topological chains defines
a chain complex $(S_*(B_i),\partial_*)$ of singular topological chains in $B_i$.
For each $p\geq 0$ we recall that
the group $S_p(B_i)$ is freely generated by the singular $C^f_p$-spaces in $B_i$.

We define the subgroup $\mathcal{S}_p(B_i)$ of $S_p (B_i)$ generated by the smooth maps $\sigma \colon P \to B_i$ satisfying $\sigma=\mathrm{ev}_+\circ\mathrm{pr}$,
provided that $P$ is a connected component of a fibered product, where $\mathrm{pr}$ denotes the projection onto the last component of the fibered product.
For $p<0$ or $B_i=\emptyset$ we set $\mathcal{S}_p(B_i) = S_p(B_i) = 0$.

\begin{definition}\label{definition:Morse--Bott chain group}
An element in $\mathcal{S}_p(B_i)$ is said to have \textit{Morse--Bott degree} $p+i$.
\end{definition}

For each $k\in\ZZ$ we define
\[
    \widetilde{\CB}_k(f) = \bigoplus_{i=0}^{\dim{M}} \mathcal{S}_{k-i}(B_i).
\]

For each $j=0$,~1, \dots,~$\dim{M}$
we define a homomorphism $\partial_{[j]}\colon \mathcal{S}_p(B_i)\to \mathcal{S}_{p+j-1}(B_{i-j})$ as follows.
For $j=i+1$, \dots,~$\dim{M}$ we set $\partial_{[j]}=0$.
For $j=0$ we define $\partial_{[0]}=(-1)^{p+i}\partial_p$
where $\partial_p\colon \mathcal{S}_p(B_i)\to \mathcal{S}_{p-1}(B_i)$ is the boundary operator of the chain complex $(S_*(B_i),\partial_*)$.

Let $j=1$,~2, \dots,~$i$.
Let $\sigma\colon P\to B_i$ be a singular $C^f_p$-space in $\mathcal{S}_p(B_i)$.
We then define $\partial_{[j]}(\sigma)$ to be the composition map
\[
    \mathrm{ev}_+\circ\mathrm{pr}_2\colon P\times_{B_i} \overline{\mathcal{M}}(B_{i},B_{i-j}) \xrightarrow{\mathrm{pr}_2} \overline{\mathcal{M}}(B_{i},B_{i-j}) \xrightarrow{\mathrm{ev}_+} B_{i-j},
\]
where $\mathrm{pr}_2$ is the projection onto the second component
and $\mathrm{ev}_+$ is the endpoint map.
The following lemma says that $\partial_{[j]}(\sigma)\in \mathcal{S}_{p+j-1}(B_{i-j})$.

\begin{lemma}[cf.\ {\cite[Lemma 5.3]{BH10}}]
Let $\sigma\colon P\to B_i$ be a singular $C^f_p$-space in $\mathcal{S}_p(B_i)$.
Then for any $j=1$,~$2$, \dots,~$i$
adding the connected components of $P \times _{B_i} \overline{\mathcal{M}}(B_{i},B_{i-j})$
$($with sign when  the dimension of a component is zero$)$,
we can consider $P\times_{B_i}\overline{\mathcal{M}}(B_{i},B_{i-j})$ as an element of $S^f_{p+j-1}$.
Therefore, for all $j=1,2,\dots,i$
we have an induced homomorphism $\partial_{[j]}\colon \mathcal{S}_p(B_i)\to \mathcal{S}_{p+j-1}(B_{i-j})$.
\end{lemma}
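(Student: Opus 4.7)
The plan is to unpack the two possible shapes of a singular $C^f_p$-space $\sigma\colon P\to B_i$ in $\mathcal{S}_p(B_i)$ and to check, via the results already established for compactified moduli spaces, that appending $\overline{\mathcal{M}}(B_i,B_{i-j})$ produces an oriented compact manifold with corners whose connected components belong to $C^f_{p+j-1}$ and whose associated reference map is again of the form $\mathrm{ev}_+\circ\mathrm{pr}$.

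First, I would fix the two cases for $\sigma$. Either $P=\dotDelta^p$ is a $p$-face of $\Delta^N$ and $\sigma$ is an arbitrary smooth map, or, by the ``provided'' clause in the definition of $\mathcal{S}_p(B_i)$, $P$ is a $p$-dimensional connected component of a fibered product
\[
    \dotDelta^q\times_{B_{i_1}}\overline{\mathcal{M}}(B_{i_1},B_{i_2})\times_{B_{i_2}}\cdots\times_{B_{i_{n-1}}}\overline{\mathcal{M}}(B_{i_{n-1}},B_i),
\]
with $\dim M\geq i_1>\cdots>i_{n-1}>i$, and $\sigma=\mathrm{ev}_+\circ\mathrm{pr}$ is the composition of projection onto the last factor followed by the endpoint map. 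In either situation the source $P$ is already a compact smooth manifold with corners by Remark \ref{remark:fibered_product_ori_corners}.

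Next, I would form $P\times_{B_i}\overline{\mathcal{M}}(B_i,B_{i-j})$ using the beginning point map $\mathrm{ev}_-\colon\overline{\mathcal{M}}(B_i,B_{i-j})\to B_i$. By Theorem \ref{theorem:Compactification}, $\mathrm{ev}_-$ is a fiber bundle, hence a submersion, so $\sigma$ is automatically transverse (and, since the fiber bundle structure extends over the corner strata, stratum transverse) to $\mathrm{ev}_-$. Lemma \ref{lemma:fiber product is closed on Man(smooth, cpt, with corners)} then yields that the fibered product is a compact smooth manifold with corners, and the dimension count gives
\[
    \dim\bigl(P\times_{B_i}\overline{\mathcal{M}}(B_i,B_{i-j})\bigr)=p+(j+b_i-1)-b_i=p+j-1,
\]
as required. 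Definition \ref{definition:orientation of fiber product} then equips the fibered product with an orientation, and Lemma \ref{lemma:orientations are associative} lets me rewrite
\[
    P\times_{B_i}\overline{\mathcal{M}}(B_i,B_{i-j})=\dotDelta^q\times_{B_{i_1}}\cdots\times_{B_{i_{n-1}}}\overline{\mathcal{M}}(B_{i_{n-1}},B_i)\times_{B_i}\overline{\mathcal{M}}(B_i,B_{i-j})
\]
as oriented manifolds with corners (with the degenerate case $n=1$ covering $P=\dotDelta^p$). The chain of indices $i_1>\cdots>i_{n-1}>i>i-j$ is strictly decreasing, so every connected component of this rewritten fibered product is a $(p+j-1)$-dimensional connected component of a fibered product of exactly the shape listed in the definition of $C^f_{p+j-1}$.

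Finally, I would assemble the output. Orienting each component by the recipe above, and for $p+j-1=0$ recording the intrinsic orientation of a single point as a sign $\pm 1$, the sum of components defines an element of $S^f_{p+j-1}$. Postcomposing the projection onto the new final factor $\overline{\mathcal{M}}(B_i,B_{i-j})$ with $\mathrm{ev}_+$ exhibits this element as a singular $C^f_{p+j-1}$-space in $B_{i-j}$ of the form $\mathrm{ev}_+\circ\mathrm{pr}$, hence lying in $\mathcal{S}_{p+j-1}(B_{i-j})$; extending by $\ZZ$-linearity to all of $\mathcal{S}_p(B_i)$ produces the desired homomorphism $\partial_{[j]}$. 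The only delicate point is the transversality/stratum-transversality needed to invoke Definition \ref{definition:orientation of fiber product} and Lemma \ref{lemma:fiber product is closed on Man(smooth, cpt, with corners)} uniformly in all corner strata; this is where the fact that $\mathrm{ev}_-$ is a fiber bundle, rather than merely a smooth map, does the real work, and I expect this verification to be the main (though entirely standard) obstacle.
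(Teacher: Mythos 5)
The paper does not supply its own proof of this lemma; it is taken over directly from \cite[Lemma 5.3]{BH10}, so there is no in-paper argument to compare against. Your reconstruction is nonetheless correct and hits the right ingredients: (a) the two shapes of a generator $\sigma\colon P\to B_i$ of $\mathcal{S}_p(B_i)$ (bare face $\dotDelta^p$ with arbitrary smooth $\sigma$, or connected component of an iterated fibered product with $\sigma=\mathrm{ev}_+\circ\mathrm{pr}$); (b) the fact that $\mathrm{ev}_-$ is a fiber bundle, hence a submersion on every stratum, which gives transversality and stratum transversality for free and lets you invoke Lemma \ref{lemma:fiber product is closed on Man(smooth, cpt, with corners)} and Definition \ref{definition:orientation of fiber product}; (c) the dimension count $p+(j+b_i-1)-b_i=p+j-1$ from Definition \ref{definition:deg of moduli}; (d) associativity of the oriented fibered product (Lemma \ref{lemma:orientations are associative}) to recognize the result as something of the shape appearing in $C^f_{p+j-1}$; and (e) that $\mathrm{ev}_+\circ\mathrm{pr}$ onto the new last factor lands you back in $\mathcal{S}_{p+j-1}(B_{i-j})$.

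One small imprecision worth tidying: the displayed equality
\[
    P\times_{B_i}\overline{\mathcal{M}}(B_i,B_{i-j})=\dotDelta^q\times_{B_{i_1}}\cdots\times_{B_{i_{n-1}}}\overline{\mathcal{M}}(B_{i_{n-1}},B_i)\times_{B_i}\overline{\mathcal{M}}(B_i,B_{i-j})
\]
is not literally true, since $P$ is only one connected component of the left-hand iterated fibered product. What you actually need (and what your argument effectively uses) is the weaker statement that every connected component of $P\times_{B_i}\overline{\mathcal{M}}(B_i,B_{i-j})$ is a connected component of the full iterated fibered product on the right, and hence belongs to $C^f_{p+j-1}$; that inclusion follows because $P$ is a union of components of the partial product, and taking a further fibered product with $\overline{\mathcal{M}}(B_i,B_{i-j})$ over $\mathrm{ev}_-$ commutes with restriction to unions of components. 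With that phrasing corrected, the argument is complete.
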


Let $k=1$,~2, \dots,~$\dim{M}$.
We define a homomorphism $\bm{\partial}_k \colon \widetilde{\CB}_k(f) \to \widetilde{\CB}_{k-1}(f)$ by
\begin{equation}\label{eq:boundary_op}
    \bm{\partial}_k = \bigoplus_{p+i=k} \bigoplus_{j=0}^i \left(\partial_{[j]}\colon \mathcal{S}_p(B_i)\to \mathcal{S}_{p+j-1}(B_{i-j})\right).
\end{equation}
Then we can draw the following diagram.
\[
    \xymatrix@C=35pt@R=20pt{
    \ddots & \vdots \ar@{}[d]|{\oplus} &\vdots \ar@{}[d]|{\oplus} &\vdots \ar@{}[d]|{\oplus} & \\
    \cdots & \mathcal{S} _0 (B_2) \ar[r]^-{\partial_{[0]}} \ar[dr]|(.45){\partial_{[1]}} \ar[ddr]|(.3){\partial_{[2]}}|\hole \ar@{}[d]|{\oplus} & 0 \ar@{}[d]|{\oplus} & 0 \ar@{}[d]|{\oplus} & \\
    \cdots & \mathcal{S} _1 (B_1) \ar[r]^(.55){\partial_{[0]}} \ar[dr]|(.45){\partial_{[1]}} \ar@{}[d]|{\oplus}& \mathcal{S} _0 (B_1) \ar[r]^-{\partial_{[0]}} \ar[dr]|(.45){\partial_{[1]}} \ar@{}[d]|{\oplus} & 0 \ar@{}[d]|{\oplus} &  & \\ 
    \cdots & \mathcal{S} _2 (B_0) \ar[r]^{\partial_{[0]}} \ar@{}[d]|{||} & \mathcal{S} _1 (B_0) \ar[r]^{\partial_{[0]}} \ar@{}[d]|{||} & \mathcal{S} _0 (B_0) \ar[r]^-{\partial_{[0]}} \ar@{}[d]|{||} & 0 \\
    \cdots & \widetilde{\CB}_2(f) \ar[r]^{\bm{\partial}_2} & \widetilde{\CB}_1(f) \ar[r]^{\bm{\partial}_1} & \widetilde{\CB}_0(f) \ar[r]^-{\bm{\partial}_0} & 0
    }
\]
One has $\bm{\partial}_{k-1}\circ\bm{\partial}_k=0$ in the following sense.

\begin{proposition}[cf.\ {\cite[Proposition 5.5]{BH10}}]\label{proposition:boundary operator}
For every $j=0$,~$1$, \dots,~$\dim{M}$ we have
\[
    \sum_{q=0}^j \partial_{[q]} \circ \partial_{[j-q]}=0.
\]		
\end{proposition}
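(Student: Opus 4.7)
The plan is to test the identity on a typical singular $C^f_p$-space $\sigma\colon P\to B_i$ in $\mathcal{S}_p(B_i)$ and to split the sum $\sum_{q=0}^{j}\partial_{[q]}\circ\partial_{[j-q]}(\sigma)$ into three pieces: the boundary terms $q=0$ and $q=j$, and the ``interior'' terms $1\leq q\leq j-1$. The expectation is that the $\partial P$-contribution produced by $\partial_{[0]}\circ\partial_{[j]}$ is cancelled by $\partial_{[j]}\circ\partial_{[0]}$ via a sign flip, while the $\partial\overline{\mathcal{M}}(B_i,B_{i-j})$-contribution cancels the sum of the interior terms via a telescoping identity, once the boundary of $\overline{\mathcal{M}}(B_i,B_{i-j})$ is rewritten using Definition \ref{definition:deg of moduli}.

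The case $j=0$ reduces to $\partial_{[0]}\circ\partial_{[0]}=0$, which follows at once from $\partial_{p-1}\circ\partial_p=0$ in the abstract topological chain complex $(S^f_*,\partial_*)$ (the signs $(-1)^{p+i}$ and $(-1)^{(p-1)+i}$ introduced in the definition of $\partial_{[0]}$ merely multiply zero). For $j\geq 1$, I first apply Definition \ref{definition:boundary operator to fiber product} to the fibered product $P\times_{B_i}\overline{\mathcal{M}}(B_i,B_{i-j})$ in order to compute $\partial_{[0]}\circ\partial_{[j]}(\sigma)$. This produces, with an overall prefactor $(-1)^{p+i-1}$ coming from the definition of $\partial_{[0]}$ on $\mathcal{S}_{p+j-1}(B_{i-j})$, a sum of $\partial P\times_{B_i}\overline{\mathcal{M}}(B_i,B_{i-j})$ and $(-1)^{p+b_i}P\times_{B_i}\partial\overline{\mathcal{M}}(B_i,B_{i-j})$. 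I then replace $\partial\overline{\mathcal{M}}(B_i,B_{i-j})$ by the sum $(-1)^{i+b_i}\sum_{i-j<n<i}\overline{\mathcal{M}}(B_i,B_n)\times_{B_n}\overline{\mathcal{M}}(B_n,B_{i-j})$ from Definition \ref{definition:deg of moduli} and use Lemma \ref{lemma:orientations are associative} (associativity of oriented fibered products) to rebracket each term as $(P\times_{B_i}\overline{\mathcal{M}}(B_i,B_n))\times_{B_n}\overline{\mathcal{M}}(B_n,B_{i-j})$.

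The term $\partial_{[j]}\circ\partial_{[0]}(\sigma)$ is immediate: it equals $(-1)^{p+i}\partial P\times_{B_i}\overline{\mathcal{M}}(B_i,B_{i-j})$. Adding to the previous computation, the two $\partial P$-contributions carry opposite signs $(-1)^{p+i-1}$ and $(-1)^{p+i}$ and cancel, while the combined sign $(-1)^{p+i-1}(-1)^{p+b_i}(-1)^{i+b_i}=-1$ in front of the telescoping part leaves exactly $-\sum_{n}(P\times_{B_i}\overline{\mathcal{M}}(B_i,B_n))\times_{B_n}\overline{\mathcal{M}}(B_n,B_{i-j})$. For $1\leq q\leq j-1$, unwinding the definitions shows that $\partial_{[q]}\circ\partial_{[j-q]}(\sigma)$ is precisely the chain $(P\times_{B_i}\overline{\mathcal{M}}(B_i,B_n))\times_{B_n}\overline{\mathcal{M}}(B_n,B_{i-j})$ with $n=i-j+q$, and as $q$ ranges over $1,\dots,j-1$ the index $n$ ranges over $i-j+1,\dots,i-1$, reproducing the above sum with opposite sign. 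Hence everything cancels.

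The main obstacle is a careful sign audit: four distinct sign conventions intervene, namely $(-1)^{p+i}$ in $\partial_{[0]}$, $(-1)^{p+b_i}$ in the fibered-product Leibniz rule (Definition \ref{definition:boundary operator to fiber product}), $(-1)^{i+b_i}$ in $\partial\overline{\mathcal{M}}$ (Definition \ref{definition:deg of moduli}), and $(-1)^{\dim X\dim P_2}$ in the orientation convention (Definition \ref{definition:orientation of fiber product}). The proof essentially verifies that these conventions were chosen so that the $\partial P$-sign cancellation and the telescoping sign match hold simultaneously; checking this is routine but unavoidable. I would also remark at the end that, thanks to Lemma \ref{lemma:fiber product is closed on Man(smooth, cpt, with corners)} and Remark \ref{remark:fibered_product_ori_corners}, each chain produced in the calculation is a well-defined oriented compact smooth manifold with corners, so that all equalities are meaningful at the level of elements of $\mathcal{S}_*(B_{i-j})$.
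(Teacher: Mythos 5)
Your proof is correct: the sign audit checks out (the $\partial P$ contributions from $q=0$ and $q=j$ carry opposite signs $(-1)^{p+i-1}$ and $(-1)^{p+i}$, and the cumulative sign $(-1)^{p+i-1}(-1)^{p+b_i}(-1)^{i+b_i}=-1$ in front of the $\partial\overline{\mathcal{M}}$ terms exactly cancels the interior sum $\sum_{q=1}^{j-1}\partial_{[q]}\circ\partial_{[j-q]}$ after reindexing $n=i-j+q$ and using Lemma \ref{lemma:orientations are associative} to rebracket). The paper itself gives no proof for this proposition beyond the citation to \cite[Proposition 5.5]{BH10}, and your computation is precisely the direct verification that that reference (and the one sketched route for $j=0$) carries out, so this is the same approach made explicit.
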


\begin{corollary}
The pair $(\widetilde{\CB}_* (f),\bm{\partial}_*)$ is a chain complex.
\end{corollary}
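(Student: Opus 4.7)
The plan is to reduce the identity $\bm{\partial}_{k-1}\circ\bm{\partial}_k=0$ to Proposition \ref{proposition:boundary operator} by examining each direct summand of the target separately. First I would fix a generator $\sigma\in\mathcal{S}_p(B_i)$ with $p+i=k$ and unpack the formula \eqref{eq:boundary_op}:
\[
    \bm{\partial}_{k-1}\bigl(\bm{\partial}_k(\sigma)\bigr)
    =\sum_{j=0}^{i}\sum_{q=0}^{i-j}\partial_{[q]}\bigl(\partial_{[j]}(\sigma)\bigr),
\]
noting that each summand $\partial_{[q]}(\partial_{[j]}(\sigma))$ lies in $\mathcal{S}_{p+j+q-2}(B_{i-j-q})$, and that terms with $j>i$ or $j+q>i$ vanish by the conventions $\partial_{[j]}=0$ for $j>i$ and $B_{i-j-q}=\emptyset$ otherwise.

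Next I would reindex by $m=j+q$, grouping together all terms that land in the same summand $\mathcal{S}_{p+m-2}(B_{i-m})$ of $\widetilde{\CB}_{k-2}(f)$. Since distinct summands of a direct sum vanish independently, the identity reduces to checking, for every $m=0$,~$1$, \dots,~$i$, that
\[
    \sum_{q+j=m}\partial_{[q]}\bigl(\partial_{[j]}(\sigma)\bigr)=0.
\]
This is precisely Proposition \ref{proposition:boundary operator} with the role of $j$ there played by $m$. For values $m>i$ the target $\mathcal{S}_{p+m-2}(B_{i-m})$ vanishes by convention, so no further check is needed. The corollary then follows by $\ZZ$-linear extension over generators of $\widetilde{\CB}_k(f)$.

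The main obstacle, if any, is purely bookkeeping: one must confirm that the reindexing above exhausts all terms of the composition and that no contribution is counted twice or missed at the endpoints ($j=0$, $j=i$, $q=0$, $q=i-j$). In particular the sign twist $\partial_{[0]}=(-1)^{p+i}\partial_p$ together with the sign conventions of Definition \ref{definition:boundary operator to fiber product} must conspire so that the inner boundary of a fibered product $P\times_{B_i}\overline{\mathcal{M}}(B_i,B_{i-m})$ (contributing to $\partial_{[0]}\partial_{[m]}$ and to $\partial_{[m]}\partial_{[0]}$) cancels exactly with the compactification boundary of $\overline{\mathcal{M}}(B_i,B_{i-m})$ (appearing in the cross terms $\partial_{[q]}\partial_{[m-q]}$ with $1\leq q\leq m-1$). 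This cancellation is exactly the content of Proposition \ref{proposition:boundary operator}, so once the direct sum decomposition is set up correctly, the statement follows immediately.
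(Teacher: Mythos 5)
Your argument is correct and is exactly the intended derivation: the paper states the corollary immediately after Proposition \ref{proposition:boundary operator} with no further proof, leaving implicit precisely the reindexing from the double sum $\sum_j\sum_q\partial_{[q]}\circ\partial_{[j]}$ to the anti-diagonal sums $\sum_{q+j=m}\partial_{[q]}\circ\partial_{[j]}$ landing in each summand $\mathcal{S}_{p+m-2}(B_{i-m})$. Your bookkeeping, including the observation that terms with $m>i$ vanish because $B_{i-m}=\emptyset$, fills in that gap faithfully and there is nothing more to add.
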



\subsection{Representing chain systems}\label{subsection:rep. sys.}

We define representing chain systems necessary for the construction of our Morse--Bott--Smale chain complex $(\CB_*(f),\bm{\partial}_*)$.
Let $\{C^f_p\}_{p\geq 0}$ be the collection defined in Section \ref{subsection:simplicial MBS chain complex}.

For each $p\geq 0$ we consider the set of pairs
\[
    A_p = \left\{\,(P,s_P) \relmiddle| P\in S^f_{p},\ s_{P}\in S_{p}^{\Delta}(P)\,\right\},
\]
where $S_{p}^{\Delta}(P)$ denotes the $p$-th singular simplicial chain group of $P$.
For topological spaces $P$ and $Q$ we define $S_{p}^{\Delta}(P+Q)=S_{p}^{\Delta}(P)\oplus S_{p}^{\Delta}(Q)$.
We consider $S_{p}^{\Delta}(P)$ and $S_{p}^{\Delta}(Q)$ as subgroups of $S_{p}^{\Delta}(P+Q)$.
Moreover, we set $S_{p}^{\Delta}(-P)=S_{p}^{\Delta}(P)$
and define $S_{p}^{\Delta}(0)=\bigoplus _{P\in C^f_p} S_{p}^{\Delta}(P)$.
We form the free abelian group generated by the elements of $A_p$, and then take the quotient by the relation
\[
    (P,s_P) + (Q,s_Q) = (P+ Q,s_P+s_Q),
\]
for any $(P,s_P)$, $(Q,s_Q)\in A_p$.
We perform this construction for all $p$, then take their direct sum to obtain a group denoted by $\mathcal{Q}$.
Namely,
\[
    \mathcal{Q} = \bigoplus_{p\geq 0} \frac{\ZZ[A_p]}{\langle\, (P,s_P)+(Q,s_Q)-(P+Q,s_P+s_Q) \mid (P,s_P),\, (Q,s_Q)\in A_p \,\rangle}.
\]

\begin{definition}[cf.\ {\cite[Definition 6.7]{BH10}}]\label{definition:rep.sys.}
A \textit{representing chain system} for $f$ is a subgroup $\mathcal{R}$ of $\mathcal{Q}$ that satisfies the following conditions.
\begin{enumerate}
    \item For any $P\in C^f_p$
    there exists an element $(P,s_P)\in\mathcal{R}$
    such that the singular simplicial chain $s_P$ represents the relative fundamental class of $P$ in $H_p(P,\partial P)$.
    \item If $P\in C^f_p$ satisfies $\partial _p (P) = \sum _k n_k P_k \in S^f_{p-1}$
    where $n_k=\pm 1$ and $P_k\in C^f_{p-1}$,
    then
    \[
        \partial _p (s_{P}) = \sum_k n_k s_{P_k} \in S_{p-1}^{\Delta}(P).
    \]
\end{enumerate}
For $(P,s_P)\in\mathcal{R}$ we call $s_P$ a \textit{representing chain} for $P$
and write as $s_P\in\mathcal{R}$ for simplicity.
\end{definition}

\begin{remark}
The second condition in Definition \ref{definition:rep.sys.} ensures that
for every $(P,s_{P})\in\mathcal{R}$ the pair $(\partial_{p}(P),\partial_{p}(s_{P}))$ can be decomposed into elements of that satisfy the first condition.
\end{remark}

The sum of two representing chain systems is again a representing chain system.
Hence all representing chain systems together form a directed partially ordered set.
We focus on minimal representing chain systems satisfying additional conditions.

\begin{definition}\label{definition:good rep.sys.}
A representing chain system $\mathcal{R}$ for $f$ is called \textit{good}
if it satisfies the following conditions.

\begin{enumerate}
    \item For any $P\in C^f_p$ there exists a unique element $(P,s_P)\in \mathcal{R}$
    such that $s_{P}$ represents the positive relative fundamental class of $P$ in $H_p(P,\partial P)$.
    \item Let $P$ be a connected component of a fibered product of the form 
    \[
        \dotDelta \times_{B_{i_1}} \overline{\mathcal{M}}(B_{i_1},B_{i_2}) \times_{B_{i_2}} \cdots \times_{B_{i_{n-1}}} \overline{\mathcal{M}}(B_{i_{n-1}},B_{i_{n}}),
    \]
    where $\dotDelta$ is a face of $\Delta^N$.
    Let $s_{P}=\sum_{\alpha} n_{\alpha} \sigma_{\alpha}\in\mathcal{R}$ be the representing chain for $P$.
    For each $\alpha$ let $\Delta^p_{\alpha}=\Delta^p$ denote the domain of the singular simplex $\sigma_{\alpha}$.
    For any $\alpha$ and any critical submanifold $B\prec B_{i_n}$
    consider the fibered product $\Delta^{p}_{\alpha} \times_{B_{i_{n}}} \overline{\mathcal{M}}(B_{i_{n}},B)$
    of the composition map
    \[
        \mathrm{ev}_+\circ\mathrm{pr}\circ\sigma_{\alpha} \colon \Delta^p_{\alpha} \xrightarrow{\sigma_{\alpha}} P \xrightarrow{\mathrm{pr}} \overline{\mathcal{M}}(B_{i_{n-1}},B_{i_{n}}) \xrightarrow{\mathrm{ev}_+} B_{i_n},
    \]
    and the beginning map $\mathrm{ev}_- \colon \overline{\mathcal{M}}(B_{i_{n}},B) \to B_{i_n}$.
    
    If the representing chain for $\Delta^{p}_{\alpha} \times_{B_{i_{n}}} \overline{\mathcal{M}}(B_{i_{n}},B)$ is of the form
    \[
        s_{\Delta^{p}_{\alpha} \times_{B_{i_{n}}} \overline{\mathcal{M}}(B_{i_{n}},B)} = \sum_{\beta} m_{\beta} \tau_{\beta}, 
    \]
    then the representing chain for the fibered product $P \times_{B_{i_{n}}} \overline{\mathcal{M}}(B_{i_{n}},B)$ coincides with
    \[
        s_{P\times_{B_{i_{n}}} \overline{\mathcal{M}}(B_{i_{n}},B)} = \sum_{\alpha,\,\beta} n_{\alpha} m_{\beta} \bigl((\sigma_{\alpha}\times \mathrm{id})\circ\tau_{\beta}\bigr), 
    \]
    where $\mathrm{id}$ denotes the identity map on $\overline{\mathcal{M}}(B_{i_n},B)$.
\end{enumerate}
\end{definition}

\begin{remark}    
A good representing chain system is required so that the computations in each row of our Morse--Bott--Smale chain complex
coincide with those of the singular simplicial chain complex up to sign,
and also behave in a manner consistent with the homomorphisms $\partial_{[j]}$,
$j=1$,~2, \dots,~$\dim{M}$.
\end{remark}

Hereafter, we choose the standard embedding $\iota\colon\Delta^p\to\dotDelta^p\subset\Delta^N$
as a representing chain for a $p$-face $\dotDelta^p$ of $\Delta^N$.
Namely, $s_{\dotDelta^p}=\iota$.

\begin{lemma}\label{lemma:existence of a good rep. chains}
There exists a good representing chain systems for $f$.
\end{lemma}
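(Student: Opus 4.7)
The plan is to build the representing chains by induction on the depth $n$ of the fibered product defining an element $P\in C^f_*$. The base case $n=0$ is the prescription just above the statement: for every $p$-face $\dotDelta^p$ of $\Delta^N$, set $s_{\dotDelta^p}=\iota$. This single-simplex chain represents the positive relative fundamental class, and the boundary compatibility in Definition \ref{definition:rep.sys.} is built into the boundary formula of Example \ref{example:face abs. simpl. cpx.}.

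For the inductive step I simultaneously construct representing chains for two families: \textbf{(a)} the face-moduli products $P=\dotDelta^q\times_{B_{i_1}}\overline{\mathcal{M}}(B_{i_1},B_{i_2})\times\cdots\times_{B_{i_{n-1}}}\overline{\mathcal{M}}(B_{i_{n-1}},B_{i_n})$ in $C^f_*$, and \textbf{(b)} the auxiliary \emph{simplex}-moduli products $\Delta^{p}\times_{B_{i_1}}\overline{\mathcal{M}}(B_{i_1},B_{i_2})\times\cdots\times_{B_{i_{n-1}}}\overline{\mathcal{M}}(B_{i_{n-1}},B_{i_n})$, where $\Delta^{p}$ is a standard simplex equipped with a smooth map to $B_{i_1}$ forced on us by condition (ii) of Definition \ref{definition:good rep.sys.}. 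Objects of type (b) are compact smooth manifolds with corners by Lemma \ref{lemma:fiber product is closed on Man(smooth, cpt, with corners)}, and I choose a smooth triangulation of each such $Q$ that represents the positive relative fundamental class and extends the triangulation already fixed on $\partial Q$. The boundary decomposes, via the Leibniz rule of Definition \ref{definition:boundary operator to fiber product} together with the compactification formula following Theorem \ref{theorem:Compactification}, into either face-restrictions of $\Delta^p$ against the same chain of moduli spaces (a strictly shallower simplex-moduli product) or corners of the form $\Delta^p\times\cdots\times_{B_{i_{n-1}}}\overline{\mathcal{M}}(B_{i_{n-1}},B')\times_{B'}\overline{\mathcal{M}}(B',B_{i_n})$ already handled at a previous stage of the induction. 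Such an extension exists because the connecting homomorphism $H_p(Q,\partial Q)\to H_{p-1}(\partial Q)$ sends the relative fundamental class to the fundamental class of the boundary, as in the classical triangulation argument sketched in \cite[Lemma 6.8]{BH10}. For objects of type (a), I then define $s_P$ by the formula of Definition \ref{definition:good rep.sys.}(ii), taking $s_{\dotDelta^q}=\iota$ as the outermost factor and multiplying by the simplex-moduli triangulations built in step (b).

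The principal obstacle is the coherent assembly of these choices: a stratum such as $\Delta^{p}\times_{B_{i_{n-1}}}\overline{\mathcal{M}}(B_{i_{n-1}},B')\times_{B'}\overline{\mathcal{M}}(B',B_{i_n})$ appears both as a boundary corner of a depth-$n$ simplex-moduli product and as itself a simplex-moduli product of greater depth built by Definition \ref{definition:good rep.sys.}(ii) from a shallower piece. The associativity of fibered products and of their induced orientations (Lemma \ref{lemma:orientations are associative}) guarantees that the two viewpoints produce the same oriented manifold with corners, so the two triangulations agree whenever the induction on depth is carried out in a consistent order, for example by processing each object only after all simplex-moduli products appearing in its boundary have been triangulated. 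Once coherence is in place, condition (i) of Definition \ref{definition:good rep.sys.} holds because each triangulation is chosen to represent the positive relative fundamental class; condition (ii) holds by construction; and the boundary compatibility of Definition \ref{definition:rep.sys.} follows from the Leibniz rule applied to the chosen triangulations together with the inductive matching of boundary data.
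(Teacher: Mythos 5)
Your proposed induction is on the number $n$ of moduli factors, and this does not terminate. When you take the boundary of $\Delta^p\times_{B_{i_1}}\overline{\mathcal{M}}(B_{i_1},B_{i_2})\times\cdots\times_{B_{i_{n-1}}}\overline{\mathcal{M}}(B_{i_{n-1}},B_{i_n})$, the Leibniz rule produces terms of the form
\[
\Delta^p\times\cdots\times_{B_{i_k}}\overline{\mathcal{M}}(B_{i_k},B')\times_{B'}\overline{\mathcal{M}}(B',B_{i_{k+1}})\times\cdots
\]
coming from $\partial\overline{\mathcal{M}}(B_{i_k},B_{i_{k+1}})$, and these have \emph{one more} moduli factor than the product you started with. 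You assert these corners are ``already handled at a previous stage of the induction,'' but in the $n$-order they are at a \emph{later} stage, so the base for your extension argument is not actually available. The escape clause at the end --- ``processing each object only after all simplex-moduli products appearing in its boundary have been triangulated'' --- is exactly the point that needs proof: you must exhibit a well-founded order on $C^f_*$ with respect to the ``appears in $\partial(\cdot)$'' relation, and the number of moduli factors does not provide one.

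The paper resolves this with a different, double induction: first on $d=\max\{\,i_j-i_{j+1}\,\}$, the largest index jump, and within that on the dimension of the simplex factor (with a separate, secondary induction on $n$ only once $d$ is fixed). The parameter $d$ is the right one because when a moduli space $\overline{\mathcal{M}}(B,B')$ with $\ind B-\ind B'=d$ breaks along its boundary into $\overline{\mathcal{M}}(B,B'')\times_{B''}\overline{\mathcal{M}}(B'',B')$, both new factors have relative index at most $d-1$. In particular, when $d=1$ all the moduli spaces are closed manifolds, so there are no broken corners at all and one can start the induction cleanly. That is the missing idea. The rest of your argument --- the base case of standard embeddings, the use of the connecting homomorphism $H_p(Q,\partial Q)\to H_{p-1}(\partial Q)$ to extend a boundary triangulation to a representative of the relative fundamental class, the role of Lemma \ref{lemma:orientations are associative} for coherence of orientations, and defining the type-(a) chains via the product formula of Definition \ref{definition:good rep.sys.}(ii) --- is sound and matches the paper's strategy, but it cannot be made to run without replacing the $n$-induction by the $d$-induction.
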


\begin{proof}
We construct a good representing chain systems for $f$
by choosing exactly one representing chain for each $P\in C^f_p$.
For a $p$-face $\dotDelta^p$ of $\Delta^N$,
choose the standard embedding $\iota\colon\Delta^p\to\dotDelta^p\subset\Delta^N$ as a representing chain.

Assume that $P$ is a connected component of a fibered product the form $\dotDelta^q \times_{B} \overline{\mathcal{M}}(B,B')$,
where $\dotDelta^q$ is a $q$-face of $\Delta^N$,
$\overline{\mathcal{M}}(B,B')$ is a manifold without boundary, and
\[
    \deg \bigl(\dotDelta^q \times_{B} \overline{\mathcal{M}}(B,B')\bigr)+\ind{B'} \leq N.
\]
In this case, note that $\overline{\mathcal{M}}(B,B')=\mathcal{M}(B,B')$.
Then the boundary of $\dotDelta^q \times_{B} \mathcal{M}(B,B')$ is the sum of
connected components of fibered products of the form $\dotDelta^{p-1} \times_{B} \mathcal{M}(B,B')$.
As an initial step, for such $B$ and $B'$
we choose a representing chain for each connected component of the form $\dotDelta^0 \times_{B} \mathcal{M}(B,B')$.
Each chosen representing chain determines the fundamental class of the corresponding space.
We then extend these representing chains to the representing chains of the relative fundamental classes of the connected components of the form $\dotDelta^1 \times_{B} \mathcal{M}(B,B')$.
By appropriately assigning signs to these extensions,
we then define them as representing chains for those connected components.
By iterating this procedure,
we can define a representing chain for each connected component of the form
$\dotDelta^q \times_{B} \overline{\mathcal{M}}(B,B') = \dotDelta^q \times_{B} \mathcal{M}(B,B')$.

Next we describe how to choose a representing chain for a connected component of a fibered product of the form 
\[
    \dotDelta^q \times_{B_{i_1}} \overline{\mathcal{M}}(B_{i_1},B_{i_2}) \times_{B_{i_2}} \cdots \times_{B_{i_{n-1}}} \overline{\mathcal{M}}(B_{i_{n-1}},B_{i_{n}}).
\]
Put $d=\max\{\, i_j-i_{j+1} \mid j=1,\ldots,n-1\,\}$.

We consider the case of $d=1$.
Then $\overline{\mathcal{M}}(B_{i_j},B_{i_{j+1}})=\mathcal{M}(B_{i_j},B_{i_{j+1}})$ for all $j=1$, \dots,~$n-1$.
Therefore, when $n=2$,
the representing chain has already been chosen.
Assume that representing chains have been chosen for the case $n-1$.
Let $\sum_{\alpha} n_{\alpha} \sigma_{\alpha}$ be a representing chain for
a connected component $P$ of a fibered product of the form
\[
    \dotDelta^q \times_{B_{i_1}} \mathcal{M}(B_{i_1},B_{i_2}) \times_{B_{i_2}} \cdots \times_{B_{i_{n-2}}} \mathcal{M}(B_{i_{n-2}},B_{i_{n-1}}).
\]
For each $\alpha$ let $\Delta^p_{\alpha}=\Delta^p$ denote the domain of $\sigma_{\alpha}$.
Let $\sum_{\beta} m_{\beta} \tau_{\beta}$ be the already chosen representing chain for the fibered product
$\Delta^p_{\alpha} \times_{B_{i_{n-1}}} \mathcal{M}(B_{i_{n-1}},B_{i_{n}})$
of the composition map
\[
    \mathrm{ev}_+\circ\mathrm{pr}\circ\sigma_{\alpha} \colon \Delta^p_{\alpha} \xrightarrow{\sigma_{\alpha}} P \xrightarrow{\mathrm{pr}} \mathcal{M}(B_{i_{n-2}},B_{i_{n-1}}) \xrightarrow{\mathrm{ev}_+} B_{i_{n-1}},
\]
and the beginning map $\mathrm{ev}_- \colon \mathcal{M}(B_{i_{n-1}},B_{i_n}) \to B_{i_{n-1}}$.
Then we define a representing chain for the fibered product
$P \times_{B_{i_{n-1}}} \mathcal{M}(B_{i_{n-1}},B_{i_{n}})$
to be $\sum_{\alpha,\,\beta} n_{\alpha} m_{\beta} \bigl((\sigma_{\alpha} \times \mathrm{id})\circ\tau_{\beta})$.

Now consider the case of $d\geq 2$.
Assume that representing chains have been chosen for the case $d-1$.
We define representing chains for all connected components of the form
$\dotDelta^{p} \times_{B} \overline{\mathcal{M}}(B,B')$ satisfying $\ind{B}-\ind{B'}=d$ by induction on $p$.

For such $B$ and $B'$
the boundary of $\dotDelta^0 \times_B \overline{\mathcal{M}}(B,B')$ can be described in terms of fibered products of the form
$\dotDelta^0 \times_B \overline{\mathcal{M}}(B,B'')\times_{B''} \overline{\mathcal{M}}(B'',B')$.
Here the relative indices $\ind{B}-\ind{B''}$ and $\ind{B''}-\ind{B'}$ are less than or equal to $d-1$.
By the inductive hypothesis, there exists a representing chain for
$\dotDelta^0 \times_B \overline{\mathcal{M}}(B,B'')\times_{B''} \overline{\mathcal{M}}(B'',B')$,
which provides the relative fundamental class.
We extend this representing chain and obtain a representing chain for $\dotDelta^0 \times_B \overline{\mathcal{M}}(B,B')$.

Assume that a representing chain has been chosen for every connected component of the form
$\dotDelta^{p-1} \times_B \overline{\mathcal{M}}(B,B')$ with $\ind{B}-\ind{B'}=d$.
Since the boundary of $\dotDelta^p \times_B \overline{\mathcal{M}}(B,B')$ can be described in terms of fibered products of the forms
$\dotDelta^{p} \times_B \overline{\mathcal{M}}(B,B'')\times_{B''} \overline{\mathcal{M}}(B'',B')$
and
$\dotDelta^{p-1} \times_B \overline{\mathcal{M}}(B,B')$,
we can use the inductive hypotheses on $d$ and $p$.
Now we extend the representing chain of the boundary to obtain a representing chain for $\dotDelta^p \times _{B} \overline{\mathcal{M}}(B,B')$.
Hence in the case of $n=2$,
we have defined a representing chain for $P$.

For $n\geq 3$,
by a similar argument employed in the case of $d=1$ one can choose a suitable representing chain for $P$.
\end{proof}

\begin{remark}
According to Definition \ref{definition:good rep.sys.}, for $P\in C^f_p$,
its representing chain $s_{P}$ always represents the positive (relative) fundamental class of $P$.
However, no problem occurs even if $s_{P}$ always represents the negative (relative) fundamental class in Definition \ref{definition:good rep.sys.}.
For instance, when considering representatives of the positive fundamental class,
in the particular case where $P$ has dimension zero,
the constant map chosen as the representing chain carries the same sign as the orientation assigned to $P$.
On the other hand, if one considers representatives of the negative fundamental class,
the map would carry the opposite sign.
\end{remark}


\subsection{The Morse--Bott--Smale chain complex}

Let $\{C^f_p\}_{p\geq 0}$ be the collection introduced in Section \ref{subsection:simplicial MBS chain complex}.
We fix a good representing chain system $\mathcal{R}$ for $f$.
We define our Morse--Bott--Smale chain complex $(\CB_*(f,\mathcal{R}),\bm{\partial}_*)$ as a quotient of
the larger chain complex $(\widetilde{\CB}_*(f),\bm{\partial}_*)$.

\begin{definition}[Degeneracy condition]\label{definition:deg con}
Let $i=0$,~1, \dots,~$\dim{M}$ and $p\geq 0$.
We define the subgroup $\mathcal{D}_p(B_i)\subset \mathcal{S}_p(B_i)$
of \textit{degenerate} singular topological chains to be the subgroup generated by the following elements.
\begin{itemize}
    \item Let $\sigma_{P}$ be a smooth singular $C_p$-space in $B_i$.
    If $s_P=\sum_{\alpha} n_{\alpha}\sigma_{\alpha}\in\mathcal{R}$ is the representing chain for $P$,
    then $\sigma_{P}-\sum_{\alpha} n_{\alpha}(\sigma_P\circ\sigma_{\alpha})\in \mathcal{D}_p(B_i)$.
\end{itemize}
\end{definition}

\begin{remark}\label{remark:private_communication}
In order to clarify the differences between our construction and that of Banyaga and Hurtubise, we first recall their definition.
Originally, Banyaga and Hurtubise introduced the following five degeneracy relations \cite[Definition~5.9]{BH10} to define their Morse--Bott--Smale chain complex.
We remark that they worked with faces of the $N$-dimensional cube $I^N$ rather than faces of the standard $p$-simplex $\dotDelta$.
In their notation, the symbol $D^{\infty}_p$ plays the role of our $\mathcal{D}_p$.

\begin{enumerate}[label=(\arabic{enumi})]
    \item If $\alpha$ is an orientation preserving homeomorphism such that $\sigma_Q\circ\alpha=\sigma_P$
    and $\partial_0\sigma_Q\circ\alpha=\partial_0\sigma_P$,
    then $\sigma_P-\sigma_Q\in D^{\infty}_p(B_i)$.
    \item If $P$ is a face of $I^N$ and $\sigma_P$ does not depend on some free coordinate of $P$,
    then $\sigma_P\in D^{\infty}_p(B_i)$ and $\partial_j(\sigma_P)\in D^{\infty}_{p+j-1}(B_{i-j})$
    for all $j=1$, \dots,~$m$.
    \item If $P$ and $Q$ are connected components of some fibered products
    and $\alpha$ is an orientation reversing map such that $\sigma_Q\circ\alpha=\sigma_P$
    and $\partial_0\sigma_Q\circ\alpha=\partial_0\sigma_P$,
    then $\sigma_P+\sigma_Q\in D^{\infty}_p(B_i)$.
    \item If $Q$ is a face of $I^N$ and $R$ is a connected component of a fibered product
    \[
    Q%
    \times_{B_{i_1}} \overline{\mathcal{M}}(B_{i_1},B_{i_2})%
    \times_{B_{i_2}} \overline{\mathcal{M}}(B_{i_2},B_{i_3})%
    \times_{B_{i_3}} \dots%
    \times_{B_{i_{n-1}}} \overline{\mathcal{M}}(B_{i_{n-1}},B_{i_n})
    \]
    such that $\deg{R}>\dim{B_{i_n}}$,
    then $\sigma_R\in D^{\infty}_r(B_{i_n})$ and $\partial_j(\sigma_R)\in D^{\infty}_{r+j-1}(B_{i_n-j})$
    for all $j=0$,~$1$, \dots,~$m$.
    \item If $\sum_{\alpha} n_{\alpha}\sigma_{\alpha}\in S_*(R)$
    is a smooth singular chain in a connected component $R$ of a fibered product (as in (4))
    that represents the fundamental class of $R$ and
    \[
        (-1)^{r+i_n}\partial_0\sigma_R - \sum_{\alpha} n_{\alpha}\partial(\sigma_R\circ\sigma_{\alpha}) \in D^{\infty}_{r-1}(B_{i_n}),
    \]
    then
    \[
        \sigma_R - \sum_{\alpha} n_{\alpha}(\sigma_R\circ\sigma_{\alpha}) \in D^{\infty}_r(B_{i_n})
    \]
    and
    \[
        \partial_j\left(\sigma_R - \sum_{\alpha} n_{\alpha}(\sigma_R\circ\sigma_{\alpha})\right) \in D^{\infty}_{r+j-1}(B_{i_n-j})
    \]
    for all $j=1$, \dots,~$\dim{M}$.
\end{enumerate}

In private correspondence with the authors \cite{BH25}, we confirmed that these five relations are in fact redundant, and we record this here with their kind permission.
More precisely, the third and fourth degeneracy relations are not needed.
The fourth relation was included to make explicit that their chain complex reduces to the usual Morse--Smale--Witten complex when the function is Morse--Smale.
More importantly, the justification of the well-definedness of the induced homomorphism between quotients \cite[Lemma~5.10]{BH10} appears to be incomplete, since one needs to verify that the iterated maps $\partial_j$ send degenerate chains to degenerate chains.
A similar difficulty arises in defining the chain map \cite[Lemma~6.10]{BH10} and in establishing independence of the choice of Morse--Bott--Smale functions.

These observations motivate us to revisit the degeneracy conditions and to propose a simplified formulation.
This refinement constitutes one of the main contributions of the present paper, and it serves as the foundation for our construction of the Morse--Bott--Smale chain complex.
\end{remark}

We define the quotient $\mathcal{C}_p(B_i)=\mathcal{S}_p(B_i)/\mathcal{D}_p(B_i)$.

\begin{lemma}\label{lemma:induced hom}
Let $i=0$,~$1$, \dots,~$\dim{M}$ and $p\geq 0$.
For any $j=0$,~$1$, \dots,~$\dim{M}$ 
the homomorphism $\partial_{[j]}\colon \mathcal{S}_p(B_i)\to \mathcal{S}_{p+j-1}(B_{i-j})$ defined in Section \ref{subsection:simplicial MBS chain complex} descends to a homomorphism $\partial_{[j]}\colon \mathcal{C}_p(B_i)\to \mathcal{C}_{p+j-1}(B_{i-j})$.
\end{lemma}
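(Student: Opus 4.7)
The plan is to show that $\partial_{[j]}$ carries the degeneracy subgroup $\mathcal{D}_p(B_i)$ into $\mathcal{D}_{p+j-1}(B_{i-j})$; the induced map on the quotients then exists automatically. It suffices to test this on a typical generator
\[
    \eta = \sigma_P - \sum_\alpha n_\alpha\,(\sigma_P\circ\sigma_\alpha),
\]
where $\sigma_P\colon P\to B_i$ is a smooth singular $C^f_p$-space and $s_P = \sum_\alpha n_\alpha \sigma_\alpha\in\mathcal{R}$ is the representing chain for $P$, with $\sigma_\alpha\colon\Delta^p_\alpha\to P$. I would split the argument into the two cases $j=0$ and $1\leq j\leq i$ (the case $j>i$ is vacuous because $\partial_{[j]}=0$).

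For $j=0$, I would use $\partial_{[0]} = (-1)^{p+i}\partial_p$ and compute both pieces of $\partial_{[0]}(\eta)$. Writing $\partial P = \sum_k n_k P_k$ in $S^f_{p-1}$ gives $\partial_p(\sigma_P) = \sum_k n_k \sigma_P|_{P_k}$, while linearity yields $\sum_\alpha n_\alpha \partial_p(\sigma_P\circ\sigma_\alpha) = \sigma_P\circ\partial^{\Delta}(s_P)$. The decisive input is Definition \ref{definition:rep.sys.}(ii), which gives $\partial^{\Delta}(s_P) = \sum_k n_k s_{P_k}$. Writing $s_{P_k} = \sum_\gamma m^{(k)}_\gamma \sigma^{(k)}_\gamma$ and substituting, one obtains
\[
    \partial_{[0]}(\eta) = (-1)^{p+i}\sum_k n_k \Bigl(\sigma_P|_{P_k} - \sum_\gamma m^{(k)}_\gamma\, \sigma_P|_{P_k}\circ \sigma^{(k)}_\gamma\Bigr),
\]
a linear combination of degeneracy generators for the restrictions $\sigma_P|_{P_k}\colon P_k\to B_i$, hence an element of $\mathcal{D}_{p-1}(B_i)$.

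For $1\leq j\leq i$, set $Q = P\times_{B_i}\overline{\mathcal{M}}(B_i,B_{i-j})$, $\sigma_Q = \mathrm{ev}_+\circ\mathrm{pr}_2\colon Q\to B_{i-j}$, and $Q_\alpha = \Delta^p_\alpha\times_{B_i}\overline{\mathcal{M}}(B_i,B_{i-j})$. By the definition of $\partial_{[j]}$, $\partial_{[j]}(\sigma_P) = \sigma_Q$ and $\partial_{[j]}(\sigma_P\circ\sigma_\alpha) = \sigma_Q\circ(\sigma_\alpha\times\mathrm{id})$ (viewed as a chain on $Q_\alpha$). The crucial input is now Definition \ref{definition:good rep.sys.}(ii), which supplies
\[
    s_Q = \sum_{\alpha,\beta} n_\alpha m^{(\alpha)}_\beta\, (\sigma_\alpha\times\mathrm{id})\circ\tau^{(\alpha)}_\beta,
\]
where $s_{Q_\alpha} = \sum_\beta m^{(\alpha)}_\beta \tau^{(\alpha)}_\beta$ is the representing chain for $Q_\alpha$. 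Adding and subtracting the common chain $\sum_{\alpha,\beta} n_\alpha m^{(\alpha)}_\beta\, \sigma_Q\circ(\sigma_\alpha\times\mathrm{id})\circ\tau^{(\alpha)}_\beta$ then lets one split
\begin{align*}
    \partial_{[j]}(\eta) = {} & \Bigl(\sigma_Q - \sum_{\alpha,\beta} n_\alpha m^{(\alpha)}_\beta\, \sigma_Q\circ(\sigma_\alpha\times\mathrm{id})\circ\tau^{(\alpha)}_\beta\Bigr) \\
    & - \sum_\alpha n_\alpha\Bigl(\sigma_Q\circ(\sigma_\alpha\times\mathrm{id}) - \sum_\beta m^{(\alpha)}_\beta\, \sigma_Q\circ(\sigma_\alpha\times\mathrm{id})\circ\tau^{(\alpha)}_\beta\Bigr),
\end{align*}
in which the first parenthesis is the degeneracy generator for $\sigma_Q$ and each parenthesis in the sum is the degeneracy generator for $\sigma_Q\circ(\sigma_\alpha\times\mathrm{id})$; hence $\partial_{[j]}(\eta)\in\mathcal{D}_{p+j-1}(B_{i-j})$.

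The main obstacle is the case $j\geq 1$, which rests essentially on the compatibility in Definition \ref{definition:good rep.sys.}(ii): only for a \emph{good} representing chain system does the representing chain for $Q$ factor through the maps $\sigma_\alpha\times\mathrm{id}$ in the precise form needed for the algebraic regrouping above. With only the weaker Definition \ref{definition:rep.sys.}, the two chains in the first bracket would merely represent the same relative fundamental class of $Q$, and their difference would have no reason to be a sum of degeneracy generators. This is exactly what condition (ii) of goodness was designed to provide, and it is the one place in the argument where that condition is indispensable.
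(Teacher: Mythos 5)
Your proof is correct and follows essentially the same route as the paper's: for $j=0$ it uses condition (ii) of Definition \ref{definition:rep.sys.} to match boundaries with representing chains of boundary components, and for $j\geq 1$ it uses condition (ii) of Definition \ref{definition:good rep.sys.} to rewrite the representing chain of $P\times_{B_i}\overline{\mathcal{M}}(B_i,B_{i-j})$ through the maps $\sigma_\alpha\times\mathrm{id}$, followed by the same add-and-subtract regrouping into two degeneracy generators. Your version is slightly cleaner in notation (naming $\sigma_Q$ and writing $m^{(\alpha)}_\beta$ to make the $\alpha$-dependence of the representing chain explicit), but the decomposition and the key identity $\partial_{[j]}(\sigma_P\circ\sigma_\alpha)=\sigma_Q\circ(\sigma_\alpha\times\mathrm{id})$ are exactly those used in the paper.
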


\begin{proof}
Let $\sigma_{P}$ be a smooth singular $C_p$-space in $B_i$.
Let $s_P=\sum_{\alpha} n_{\alpha}\sigma_{\alpha}\in\mathcal{R}$ be the representing chain for $P$.
Then we have $\sigma_P-\sum_{\alpha} n_{\alpha}(\sigma_P\circ\sigma_{\alpha})\in\mathcal{D}_p(B_i)$.
It is enough to show that
\[
    \partial_{[j]}\left(\sigma_P-\sum_{\alpha} n_{\alpha}(\sigma_P\circ\sigma_{\alpha})\right) \in \mathcal{D}_{p+j-1} (B_{i-j})
\]
for all $j=0$,~1, \dots,~$\dim{M}$.

First we consider the case of $j=0$.
The domains of the maps appearing in the computation of
$\partial_{[0]}(\sigma_P)=(-1)^{p+i}\partial_p(\sigma_P)$ correspond to the connected components of the boundary $\partial P$.
These domains carry the signs determined by the orientation of $\partial P$ induced by $P$.
By Definition \ref{definition:rep.sys.},
applying the boundary operator $\partial_p$ to $\sum_{\alpha} n_{\alpha}\sigma_{\alpha}$
matches the representing chains for the connected components of $\partial P$.
Hence the statement holds for $j=0$.

Let $j=1$,~2, \dots,~$\dim{M}$.
We recall that the domains of the maps $\partial_{[j]}(\sigma_P)$ and $\partial_{[j]}(\sigma_P\circ\sigma_{\alpha})$
are connected components of the fibered products
$P\times_{B_i}\overline{\mathcal{M}}(B_i,B_{i-j})$ and $\Delta^p_{\alpha}\times_{B_i}\overline{\mathcal{M}}(B_i,B_{i-j})$,
respectively,
where $\Delta_{\alpha}^{p}=\Delta^{p}$ is the domain of $\sigma_{\alpha}$.
For each $\alpha$ let $\sum_{\beta} m_{\beta}\tau_{\beta}\in\mathcal{R}$
be the representing chain for $\Delta^p _{\alpha} \times_{B_{i}} \overline{\mathcal{M}}(B_{i},B_{i-j})$.
Then the representing chain for $P\times_{B_{i}}\overline{\mathcal{M}}(B_{i},B_{i-j})$ in $\mathcal{R}$ is
$\sum_{\alpha,\,\beta} n_{\alpha} m_{\beta}\bigl((\sigma_{\alpha}\times\mathrm{id})\circ\tau_{\beta}\bigr)$
by Definition \ref{definition:good rep.sys.}.
Therefore,
\[
    \partial_{[j]}(\sigma_P)%
    - \sum_{\alpha,\,\beta} n_{\alpha} m_{\beta} \left(\partial_{[j]}(\sigma_P)\circ(\sigma_{\alpha}\times\mathrm{id})\circ\tau_{\beta}\right)%
    \in\mathcal{D}_{p+j-1} (B_{i-j}).
\]
Moreover, the equation
\[
    \sum_{\alpha,\,\beta} n_{\alpha} m_{\beta} \left(\partial_{[j]}(\sigma_P\circ\sigma_{\alpha})\circ\tau_{\beta}\right)
    =\sum_{\alpha,\,\beta} n_{\alpha} m_{\beta} \left(\partial_{[j]}(\sigma_P)\circ(\sigma_{\alpha}\times\mathrm{id})\circ\tau_{\beta}\right)
\]
holds as singular simplicial chains.
Hence we have
\begin{align*}
    &\partial_{[j]}\left(\sigma_P-\sum_{\alpha} n_{\alpha}(\sigma_P\circ\sigma_{\alpha})\right) \\
    &= \left(\partial_{[j]}(\sigma_P)%
    - \sum_{\alpha,\,\beta} n_{\alpha} m_{\beta} \left(\partial_{[j]}(\sigma_P\circ\sigma_{\alpha})\circ\tau_{\beta}\right)\right)\\
    &\qquad + \left(\sum_{\alpha,\,\beta} n_{\alpha} m_{\beta} \left(\partial_{[j]}(\sigma_P\circ\sigma_{\alpha})\circ\tau_{\beta}\right)%
    - \sum_{\alpha} n_{\alpha} \partial_{[j]}(\sigma_P\circ\sigma_{\alpha})\right) \\
    & = \left(\partial_{[j]}(\sigma_P)%
    - \sum_{\alpha,\,\beta} n_{\alpha} m_{\beta} \left(\partial_{[j]}(\sigma_P)\circ(\sigma_{\alpha}\times\mathrm{id})\circ\tau_{\beta}\right)\right)\\
    &\qquad - \sum_{\alpha} n_{\alpha} \left(\partial_{[j]}(\sigma_P\circ\sigma_{\alpha})%
    - \sum_{\beta} m_{\beta}\left(\partial_{[j]}(\sigma_P\circ\sigma_{\alpha})\circ\tau_{\beta}\right)\right)%
    \in \mathcal{D}_{p+j-1}(B_{i-j}),
\end{align*}
which completes the proof.
\end{proof}

For each $k$ we define
\[
    \CB_k(f,\mathcal{R}) = \bigoplus_{i=0}^{\dim{M}} \mathcal{C}_{k-i}(B_i) = \bigoplus_{i=0}^{\dim{M}} \mathcal{S}_{k-i}(B_i) / \mathcal{D}_{k-i}(B_i).
\]
and define a homomorphism $\bm{\partial}_k\colon\CB_k(f,\mathcal{R})\to\CB_{k-1}(f,\mathcal{R})$ as in \eqref{eq:boundary_op}.
By Proposition \ref{proposition:boundary operator},
we have $\bm{\partial}_{k-1}\circ\bm{\partial}_k=0$ for any $k$.

\begin{definition}
We call the chain complex $(\CB_*(f,\mathcal{R}),\bm{\partial}_*)$ and its homology $\HB_*(f,\mathcal{R})$
the \textit{Morse--Bott--Smale chain complex} and the \textit{Morse--Bott homology} for $(f,\mathcal{R})$, respectively.
\end{definition}

\begin{theorem}\label{theorem:MB homology theorem}
The Morse--Bott homology $\HB_*(f,\mathcal{R})$ does not depend on the choices of the Morse--Bott--Smale function $f\colon M\to\RR$
nor the good representing chain system $\mathcal{R}$.
\end{theorem}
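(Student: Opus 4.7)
The plan is to split the theorem into two independent claims: (A) for a fixed Morse--Bott--Smale function $f$, the homology $\HB_*(f,\mathcal{R})$ does not depend on the good representing chain system $\mathcal{R}$; and (B) for fixed data of type $\mathcal{R}$, the homology does not depend on $f$. Combining (A) and (B) yields the theorem.

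For (A), the cleanest route is a direct chain-level comparison. Let $\mathcal{R}_0$ and $\mathcal{R}_1$ be two good representing chain systems for $f$. For each $P\in C^f_p$ the representing chains $s_P^{(0)}\in\mathcal{R}_0$ and $s_P^{(1)}\in\mathcal{R}_1$ both represent the positive relative fundamental class of $(P,\partial P)$, so their difference is a singular simplicial boundary: there exists $T_P\in S^{\Delta}_{p+1}(P)$ with $\partial T_P=s_P^{(0)}-s_P^{(1)}$ modulo lower-degree boundary contributions. Choosing such primitives $T_P$ coherently with respect to boundaries and fibered products, by induction on $p$ and on the complexity of $P$ in exact parallel with Lemma \ref{lemma:existence of a good rep. chains}, yields a chain homotopy between the natural comparison maps at the level of $\widetilde{\CB}_*(f)$, which descends to mutually inverse quasi-isomorphisms between the quotients $\CB_*(f,\mathcal{R}_0)$ and $\CB_*(f,\mathcal{R}_1)$. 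An alternative approach is to exploit the filtration by Morse--Bott index, $F_m\CB_k=\bigoplus_{i\leq m}\mathcal{C}_{k-i}(B_i)$: since $\partial_{[j]}$ strictly lowers the filtration for $j\geq 1$, the $E^1$-page of the associated spectral sequence identifies row-wise with the smooth singular simplicial homology $H_*(B_i;\ZZ)$ (by the same argument that underlies Example \ref{example:Constant func.}), which is manifestly $\mathcal{R}$-independent.

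For (B), I would use a continuation argument. Given two Morse--Bott--Smale functions $f_0,f_1$ on $(M,g)$, choose a generic smooth homotopy $\{F_s\}_{s\in[0,1]}$ connecting them and consider the parametric moduli spaces of $-\grad F_s$-trajectories between critical submanifolds of $f_0$ and of $f_1$. These spaces compactify to smooth manifolds with corners whose boundary strata combine internal gradient-line breakings with slice-wise endpoint strata at $s=0,1$; after constructing a good representing chain system on the enlarged collection of abstract topological chains arising from them exactly as in Lemma \ref{lemma:existence of a good rep. chains}, they define a chain map $\Phi_{01}\colon\CB_*(f_0,\mathcal{R}_0)\to\CB_*(f_1,\mathcal{R}_1)$. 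A second-order (two-parameter) moduli-space construction, based on a homotopy between $\{F_s\}$ and the concatenation with its reverse, then produces a chain homotopy between $\Phi_{10}\circ\Phi_{01}$ and a map that equals the identity on $\CB_*(f_0,\mathcal{R}_0)$ after identifying rep.\ systems via~(A); symmetrically for $\Phi_{01}\circ\Phi_{10}$. Hence $\Phi_{01}$ is a quasi-isomorphism.

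The principal obstacle is the analytic-combinatorial setup of~(B): extending the gluing and compactification results (Theorems \ref{theorem:Gluing}--\ref{theorem:Compactification}) to the parametric setting, tracking the signs of Definition \ref{definition:orientation of fiber product} through the new boundary strata, and verifying that the enlarged fibered-product collection admits a good representing chain system whose restrictions to the $s=0,1$ slices are compatible with $\mathcal{R}_0$ and $\mathcal{R}_1$. Once this framework is in place, Proposition \ref{proposition:boundary operator} adapts essentially verbatim to show $\Phi_{01}$ is a chain map, and the remaining chain homotopy follows from a standard second-order moduli-space argument.
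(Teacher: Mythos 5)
Your part (B) is essentially the paper's proof. The paper (following Banyaga--Hurtubise \cite{BH10}) constructs a continuation map $(F_{21})_{\#}$ from the moduli spaces $\overline{\mathcal{M}}_{F_{21}}(B_i^{f_1},B_j^{f_2})$ of a one-parameter homotopy $F_{21}\colon M\times\RR\to\RR$, after extending the notion of good representing chain system to the enlarged collection $C^{F_{21}}_p$ (Definition \ref{definition:good family of rep. chain for F}, Lemma \ref{lemma:exsitence of a good rep. chains for F}), and then proves a homotopy-of-homotopies formula (Lemma \ref{lemma:chain homotopy}) using a two-parameter family $H\colon M\times\RR\times\RR\to\RR$. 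This matches what you propose for (B) almost verbatim.

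Where you diverge is the attempted decomposition into (A) and (B). The paper does \emph{not} treat representing-chain-system independence separately: its continuation map goes directly between $\CB_*(f_1,\mathcal{R}^{f_1})$ and $\CB_*(f_2,\mathcal{R}^{f_2})$ for \emph{arbitrary} good representing chain systems on each side, because the choice of $\mathcal{R}^{F_{21}}$ interpolates between them. Specializing to $f_1=f_2$ with different $\mathcal{R}^{f_1}\neq\mathcal{R}^{f_2}$ then gives (A) for free. You instead need (A) as a separate input (you invoke it at the end of (B) to finish the homotopy argument), and your two proposed proofs of (A) both have gaps. The prism argument does not make clear what the ``natural comparison maps'' between the quotients $\CB_*(f,\mathcal{R}_0)$ and $\CB_*(f,\mathcal{R}_1)$ actually are: the degeneracy subgroups $\mathcal{D}_p(B_i)$ depend on $\mathcal{R}$, so the identity on $\widetilde{\CB}_*(f)$ does not descend, and constructing primitives $T_P$ coherently with respect to the iterated fibered products is precisely the delicate inductive problem that the paper's good-representing-chain-system framework was designed to handle --- so the ``direct'' route recreates the continuation machinery rather than bypassing it. The spectral-sequence alternative is also incomplete: knowing that each $E^1$-page is $\bigoplus_i H_*(B_i;\ZZ)$ independent of $\mathcal{R}$ does not by itself identify the abutments, since you have no comparison morphism of filtered complexes relating the two choices of $\mathcal{R}$, and the higher differentials ($\partial_{[j]}$, $j\geq 1$) do a priori depend on the representing chains. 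In short, the paper's unified continuation argument both gives (A) and (B) in one stroke and avoids the circularity in your plan; your proof of (A) as stated is not complete.
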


In Section \ref{section:independence},
we provide a proof of Theorem \ref{theorem:MB homology theorem}.


\section{Examples}\label{section:computation}

In this section, we provide several examples of computations.
Let $(M,g)$ be a closed oriented Riemannian manifold.
Let $f\colon M\to\RR$ be a Morse--Bott--Smale function on $(M,g)$.
We assume that all the critical submanifolds of $f$ and their negative normal bundles are oriented.
Let $N>\dim{M}$.

Throughout all the examples in this section,
we fix a good representing chain system $\mathcal{R}$ for $f$, which consists of relative cycles
that represent the positive relative fundamental class.
In particular, when a set $P$ has dimension zero,
the constant map with the same sign as the orientation on $P$ is chosen as its representing chain.


\begin{example}[A constant function]\label{example:Constant func.}
Assume that $f$ is constant.
Then there is the only one critical submanifold $B_0 = M$.
Thus for any $p\geq 0$ we have $\widetilde{\CB}_p(f) = \mathcal{S}_p(M)$.
Moreover, $\partial_{[0]} = (-1)^p\partial_p$
and $\partial_{[j]} = 0$ for all $j>0$.
In this case,
the good representing chain system for $f$ is trivial
since $\{C^f_p\}_{p\geq 0}$ does not involve the moduli space of gradient flow lines.
Moreover, every singular $C^f_p$-space $\sigma\colon\dotDelta^p\to M$,
where $\dotDelta^p$ is a $p$-face of $\Delta^N$,
is equivalent to the smooth singular $p$-simplex $\sigma\circ\iota\colon\Delta^p\to M$
due to the degeneracy condition (Definition \ref{definition:deg con}).
Here $\iota\colon\Delta^p\to\dotDelta^p\subset\Delta^N$ denotes the standard embedding.
Therefore, for any $p\geq 0$ we deduce that $\mathcal{C}_p (M)$ is isomorphic to
the smooth singular simplicial chain group $\mathcal{S}_p^{\Delta}(M)$ of $M$.
Hence the Morse--Bott--Smale chain complex $(\CB_*(f,\mathcal{R}),\bm{\partial}_*)$
coincides with the smooth singular simplicial chain complex of $M$ up to sign.
This chain complex can be pictured as follows:
\[
    \xymatrix@C=20pt@R=20pt{
    \ddots & \vdots \ar@{}[d]|{\oplus} &\vdots \ar@{}[d]|{\oplus} &\vdots \ar@{}[d]|{\oplus} &\vdots \ar@{}[d]|{\oplus} & \\
    \cdots & 0 \ar@{}[d]|{\oplus}& 0 \ar@{}[d]|{\oplus} & 0 \ar@{}[d]|{\oplus} & 0 \ar@{}[d]|{\oplus} &  \\ 
    \cdots &\mathcal{S}_3^{\mathrm{\Delta}}(M) \ar[r]^-{-\partial_3} \ar@{}[d]|{||} &\mathcal{S}_2^{\mathrm{\Delta}}(M) \ar[r]^-{\partial_2} \ar@{}[d]|{||} & \mathcal{S}_1^{\mathrm{\Delta}}(M) \ar[r]^-{-\partial_1} \ar@{}[d]|{||} &\mathcal{S}_0^{\mathrm{\Delta}}(M) \ar[r]^-{\partial_0} \ar@{}[d]|{||} & 0 \\
    \cdots & \CB_3(f,\mathcal{R}) \ar[r]^-{\bm{\partial}_3} & \CB_2(f,\mathcal{R}) \ar[r]^-{\bm{\partial}_2} & \CB_1(f,\mathcal{R}) \ar[r]^-{\bm{\partial}_1} & \CB_0(f,\mathcal{R}) \ar[r]^-{\bm{\partial}_0} & 0
    }
\]
Therefore, the Morse--Bott homology $\HB_*(f,\mathcal{R})$ is isomorphic to the singular homology $H_*(M;\ZZ)$.
\end{example}

Now Theorem \ref{theorem:MB homology theorem} and Example \ref{example:Constant func.} imply the following.

\begin{corollary}\label{corollary:MB_is_sing}
For any Morse--Bott--Smale function $f\colon M\to\RR$ and any good representing chain system $\mathcal{R}$ for $f$
we have an isomorphism $\HB_*(f,\mathcal{R}) \cong H_*(M;\ZZ)$.
\end{corollary}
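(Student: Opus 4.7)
The plan is to combine the invariance result of Theorem \ref{theorem:MB homology theorem} with the explicit computation in Example \ref{example:Constant func.}; there is no real work to do here beyond assembling these two ingredients.

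First I would verify that a constant function $f_0 \colon M \to \RR$ qualifies as a Morse--Bott--Smale function in the sense of Definition \ref{definition:MBS trans.}. The critical set is the single connected submanifold $B = M$, whose normal bundle has rank zero, so the normal Hessian is trivially non-degenerate and $f_0$ is Morse--Bott. Since $\grad_g f_0 \equiv 0$, every point is fixed, so $W^u(p) = \{p\}$ and $W^s(B) = M$, and the inclusion $\{p\} \hookrightarrow M$ is trivially transverse to $M$ itself; hence $f_0$ is Morse--Bott--Smale. By Lemma \ref{lemma:existence of a good rep. chains}, a good representing chain system $\mathcal{R}_0$ for $f_0$ exists.

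Next I would invoke Theorem \ref{theorem:MB homology theorem}, which asserts that $\HB_*(f, \mathcal{R})$ is independent of the choice of Morse--Bott--Smale function and good representing chain system. Applying this with the pair $(f_0, \mathcal{R}_0)$ above yields
\[
    \HB_*(f, \mathcal{R}) \;\cong\; \HB_*(f_0, \mathcal{R}_0).
\]
Finally, Example \ref{example:Constant func.} identifies the right-hand side with the smooth singular simplicial homology of $M$, which in turn equals the singular homology $H_*(M; \ZZ)$. Chaining these isomorphisms gives the desired conclusion.

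The only conceivable obstacle is the verification that a constant function really satisfies the hypotheses needed to apply the earlier theorems, but this is immediate as noted above. Hence the proof is a one-line synthesis of Theorem \ref{theorem:MB homology theorem} and Example \ref{example:Constant func.}, modulo this brief sanity check.
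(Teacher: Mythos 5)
Your proof is correct and is essentially identical to the paper's argument: the corollary is stated immediately after the sentence ``Now Theorem \ref{theorem:MB homology theorem} and Example \ref{example:Constant func.} imply the following,'' which is precisely the synthesis you describe. Your sanity check that a constant function is Morse--Bott--Smale, which the paper leaves implicit, is a reasonable addition but not a different route.
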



\begin{example}\label{example:Morse--Bott func 1}
Assume that $M = S^2 = \{\, (x,y,z) \in \RR ^3 \mid x^2+y^2+z^2=1 \,\}$,
and that $f(x,y,z)=z^2$.
Then $B_0 \approx S^1$, $B_1 = \emptyset$, and $B_2 = \{n,s\}$ where $n=(0,0,1)$ and $s=(0,0,-1)$.

\begin{figure}[ht]
    \centering
    \begin{tikzpicture}
    \draw [thick](0,0) circle[radius=2.5];
    \draw [teal,thick](-2.5,0) to [out=-30,in=-150] (2.5,0);
    \draw [dotted,teal,thick](-2.5,0) to [out=20,in=160] (2.5,0);
    \draw (2.5,0) node[below right,teal,thick]{{\LARGE$B_0$}};
    \draw (2,3) node[below right,thick]{{\LARGE$S^2$}};
    \draw[->][left,magenta,line width=0.5mm] (-3.3,0.5) to[bend left] (-0.5,2.5);
    \draw[->][left,magenta,line width=0.5mm] (-3.3,-0.5) to[bend right] (-0.5,-2.5);
    \draw (-3.5,0) node[magenta,thick]{{\LARGE$B_2$}};
    \draw[->] (4,0) -- (6,0) node [midway,above] {$f$};
    \draw[->][left,line width=0.5mm] (-0.3,2.3) to[bend right] (-1.8,1.2);
    \draw[->][left,line width=0.5mm] (0.1,2.3)-- (0.3,1.2);
    \draw[->][left,line width=0.5mm] (0.4,2.3)to[bend left] (1.7,1.2);
    \draw[->][left,line width=0.5mm] (-0.3,-2.3)to[bend left] (-1.8,-1.2);
    \draw[->][left,line width=0.5mm] (-0.1,-2.2)to[bend left] (-0.6,-1.2);
    \draw[->][left,line width=0.5mm] (0.4,-2.3)to[bend right] (1.7,-1.2);
    \draw [thick](7,-2.5) -- (7,2.5); 
    \draw [thick](6.8,-2.5) -- (7.2,-2.5) node[right]{$1$};
    \draw [thick](6.8,2.5) -- (7.2,2.5) node[right]{$1$};
    \draw [thick](6.8,0) -- (7.2,0) node[right]{$0$};
    \draw[magenta,fill] (0,2.5) circle (2pt) node [above=1mm] {$n$};
    \draw[magenta,fill] (0,-2.5) circle (2pt) node [below=1mm] {$s$};
\end{tikzpicture}
\caption{The critical submanifolds of $f$. The black arrows represent negative gradient flow lines of $f$.}
\label{figure:S^2_z^2_crit}
\end{figure}
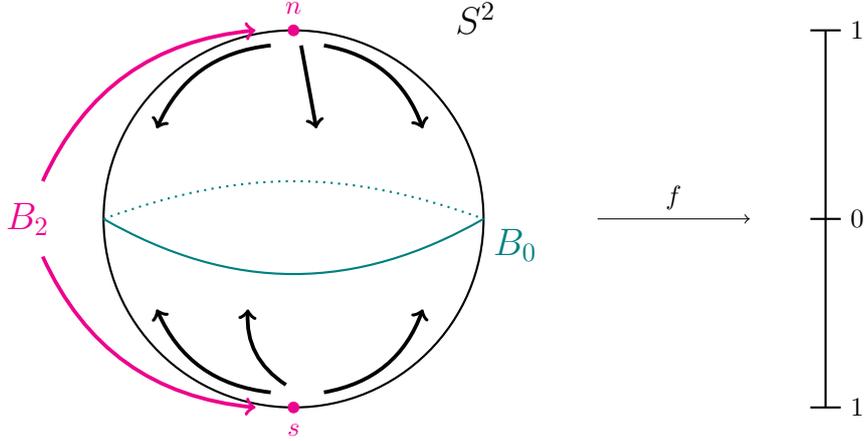

Since $\dim{B_2}=0$,
we have $\mathcal{C}_p(B_2) = \ZZ n \oplus \ZZ s$
for each $p\geq 0$.
Here the constant maps to $n$ and $s$ are identified with the points $n$ and $s$, respectively.
Then the Morse--Bott--Smale chain complex restricted to the row of $B_2$ coincides with the singular simplicial chain complex of $B_2$.
Consequently,
the map $\partial_{[0]} \colon \mathcal{C}_p(B_2)\to \mathcal{C}_{p-1}(B_2)$
is an isomorphism if $p$ is a positive even integer,
and it is the zero map otherwise.
Therefore, the Morse--Bott--Smale chain complex for $f$ is as follows:
\[
    \xymatrix@C=20pt@R=20pt{
    \cdots &\ZZ n \oplus \ZZ s \ar[r]^-{0} \ar@{}[d]|{\oplus} \ar[ddr]^-{\partial_{[2]}}& \ZZ n \oplus \ZZ s \ar[r]^-{\partial_{[0]}} \ar[ddr]^-{\partial_{[2]}} \ar@{}[d]|{\oplus}  & 0 \ar@{}[d]|{\oplus} & 0 \ar@{}[d]|{\oplus}& \\
    \cdots & 0  \ar@{}[d]|{\oplus}& 0 \ar@{}[d]|{\oplus} & 0  \ar@{}[d]|{\oplus} & 0 \ar@{}[d]|{\oplus} &  \\ 
    \cdots &\mathcal{C}_3 (B_0) \ar[r]^{\partial_{[0]}} \ar@{}[d]|{||} & \mathcal{C}_2 (B_0) \ar[r]^{\partial_{[0]}} \ar@{}[d]|{||} & \mathcal{C}_1 (B_0) \ar[r]^{\partial_{[0]}} \ar@{}[d]|{||} & \mathcal{C}_0 (B_0) \ar[r]^-{\partial_{[0]}} \ar@{}[d]|{||} & 0 \\
    \cdots & \CB_3(f,\mathcal{R}) \ar[r]^{\bm{\partial}_3} & \CB_2(f,\mathcal{R}) \ar[r]^{\bm{\partial}_2} & \CB_1(f,\mathcal{R}) \ar[r]^-{\bm{\partial}_1} & \CB_0(f,\mathcal{R}) \ar[r]^-{\bm{\partial}_0} & 0
    }
\]
It is clear that $\HB_0(f,\mathcal{R})\cong\ZZ$ since the row of $B_0$ is the singular simplicial chain complex of $B_0\approx S^1$ up to sign.

We show that $\HB_k(f,\mathcal{R})=0$ for all $k\geq 3$.
Let $(c,d) \in \Ker{\bm{\partial}_k} \subset \mathcal{C}_{k-2}(B_2) \oplus \mathcal{C}_k(B_0)$.
In particular, $c \in \Ker \bigl(\partial_{[0]} \colon \mathcal{C}_{k-2}(B_2)\to \mathcal{C}_{k-3}(B_2)\bigr)$.
Since $H_{k-2}(B_{2})=0$ whenever $k\geq 3$,
there exists $c'\in C_{k-1}(B_2)$ such that $\partial_{[0]}c'=c$.
Then we have $(c,\partial_{[2]}c') = \bm{\partial}_{k+1}(c',0) \in \Image{\bm{\partial}_{k+1}}$.
Therefore, $[(c,d)]=[(0,d-\partial_{[2]}c')]\in \HB_k(f,\mathcal{R})$.
Since $[d-\partial_{[2]}c']\in H_{k}(B_0)=0$,
we conclude that $[(c,d)]=0$.
 
We compute $\HB_1(f,\mathcal{R})$.
Consider the map $\partial_{[2]} \colon \ZZ n \oplus \ZZ s \to \mathcal{C}_1(B_0)$.
By assigning appropriate orientations to $M$, the critical submanifolds, and their negative normal bundles,
we assume that the moduli space $\overline{\mathcal{M}}(B_2,B_0)$ is oriented as shown in Figure \ref{figure:S^2_z^2_moduli}.

\begin{figure}[ht]
\centering
\begin{tikzpicture}
    \draw [thick](0,0) circle[radius=2.5];
    \draw [thick](-2.5,0) to [out=-30,in=-150] (2.5,0);
    \draw [thick,blue,](-2.03,1.45) to [out=-30,in=-150]node{{\huge$>$}} (2.03,1.45);
    \draw [thick,dotted,blue](-2.03,1.45) to [out=30,in=150] (2.03,1.45);
    \draw [thick,red](-2.03,-1.45) to [out=-30,in=-150]node{{\huge$<$}} (2.03,-1.45);
    \draw [thick,dotted,red](-2.03,-1.45) to [out=30,in=150] (2.03,-1.45);
    \draw [dotted,thick](-2.5,0) to [out=20,in=160] (2.5,0);
    \draw (-4.5,0) node[red!50!blue,thick]{{\LARGE$\overline{\mathcal{M}}(B_{2},B_{0})$}};
    \draw (3.5,2) node[blue,thick]{{\LARGE$\overline{\mathcal{M}}(\{n\},B_{0})$}};
    \draw (3.5,-2) node[red,thick]{{\LARGE$\overline{\mathcal{M}}(\{s\},B_{0})$}};
    \draw (4.5,0) node{{\LARGE$\mathrm{ev}_+$}};
    \draw[->][line width=0.5mm] (4,0) -- (2.1,0.5);
    \draw[->][line width=0.5mm] (4,-0.2) -- (2.15,-1.1);
    \draw[->][line width=0.2mm](-1.7,1)to[bend right] (-2,-0.1);
    \draw[->][line width=0.2mm](-0.8,0.7)to[bend right] (-1,-0.4);
    \draw[->][line width=0.2mm](1.7,1)to[bend left] (2,-0.1);
    \draw[->][line width=0.2mm](0.8,0.7)to[bend left] (1,-0.4);
    \draw[->][line width=0.2mm](-1.7,-1.5)to[bend left] (-2.1,-0.6);
    \draw[->][line width=0.2mm](-0.8,-1.8)to[bend left] (-1.1,-0.8);
    \draw[->][line width=0.2mm](1.7,-1.5)to[bend right] (2.1,-0.6);
    \draw[->][line width=0.2mm](0.8,-1.8)to[bend right] (1.1,-0.8);
    \draw[->][blue,line width=0.5mm] (-4.3,0.5) -- (-2.1,1.5);
    \draw[->][red,line width=0.5mm] (-4.3,-0.5) -- (-2.1,-1.5);
    \draw (-2,3) node[below right,thick]{{\LARGE$S^2$}};
    \draw[fill] (0,2.5) circle (2pt) node [above=1mm] {$n$};
    \draw[fill] (0,-2.5) circle (2pt) node [below=1mm] {$s$};
\end{tikzpicture}
\caption{The moduli space $\overline{\mathcal{M}}(B_{2},B_{0})$.}
\label{figure:S^2_z^2_moduli}
\end{figure}

Let $\sigma_n \colon \Delta^0 \to B_2$ and $\sigma_s\colon \Delta ^0 \to B_2$
denote the constant maps taking values $n$ and $s$, respectively.
These maps represent the generators of $\mathcal{C}_0(B_2)$.
The map $\partial_{[2]}(\sigma_n)$ is defined to be the composition map
\[
        \mathrm{ev}_+ \circ \mathrm{pr}_2%
        \colon \Delta^0 \times_{B_2} \overline{\mathcal{M}}(B_2,B_0)%
        \xrightarrow{\mathrm{pr}_2} \overline{\mathcal{M}}(B_2,B_0)%
        \xrightarrow{\mathrm{ev}_+} B_0,
\]
where the fibered product is taken with respect to $\sigma_n$ and the beginning map $\mathrm{ev}_-$
The same applies to $\partial_{[2]}(\sigma_s)$ as well.
Let $\sum_{\alpha} n_{\alpha} \sigma_{\alpha}$ and $\sum _{\beta} m_{\beta}\sigma _{\beta} \in \mathcal{R}$ be the representing chains for the domains of $\partial_{[2]}(\sigma_n)$ and $\partial_{[2]}(\sigma_s)$, respectively.
By definition \ref{definition:deg con},
we have
\[
    \partial_{[2]}(\sigma_n)=\sum_{\alpha} n_{\alpha}(\mathrm{ev}_+\circ \mathrm{pr}_2\circ\sigma_{\alpha})%
    \quad \text{and}\quad \partial_{[2]}(\sigma_s)=\sum_{\beta} m_{\beta}(\mathrm{ev}_+\circ \mathrm{pr}_2\circ\sigma_{\beta})
\]
as elements of $\mathcal{C}_1(B_0)=\mathcal{S}_1(B_0)/\mathcal{D}_1(B_0)$.
Moreover, they are chains that traverse $B_{0}$ in accordance with the orientation induced by their respective moduli spaces.
Thus, $\bm{\partial}_2(\sigma_n,0)=\partial_{[2]}(\sigma_n)$ and $\bm{\partial}_2(\sigma_s,0)=\partial_{[2]}(\sigma_s)$ correspond to generators of $H_1(B_0)\cong \ZZ$,
which implies that $\HB_1(f,\mathcal{R})=0$.

On the other hand, the sum $\partial_{[2]}(\sigma_n+\sigma_s)$ does not correspond to a generator of $H_1(B_0)$.
Then there exists $e\in \mathcal{C}_2(B_0)$ such that $\partial_{[0]}(e)=\partial_{[2]}(\sigma_n+\sigma_s)$.
Thus $(\sigma_n+\sigma_s,-e)\in\Ker\bm{\partial}_2$.
Since $H_2(B_0)=0$ and $\partial_{[0]}\colon \mathcal{C}_1(B_2) \to \mathcal{C}_0(B_2)$ is zero,
we conclude that $\HB_2(f,\mathcal{R})\cong \ZZ$.
\end{example}


\begin{example} \label{example:Morse--Bott func 2}
Assume that $M=S^2$ and $f(x,y,z)=-z^2$.
Then $B_0=\{n,s\}$, $B_1 \approx S^1$, and $B_2=\emptyset$.

\begin{figure}[ht]
    \centering
    \begin{tikzpicture}
    \draw [thick](0,0) circle[radius=2.5];
    \draw [teal,thick](-2.5,0) to [out=-30,in=-150] (2.5,0);
    \draw [dotted,teal,thick](-2.5,0) to [out=20,in=160] (2.5,0);
    \draw (2.5,0) node[below right,teal,thick]{{\LARGE$B_1$}};
    \draw (2,3) node[below right,thick]{{\LARGE$S^2$}};
    \draw[->][left,magenta,line width=0.5mm] (-3.3,0.5) to[bend left] (-0.5,2.5);
    \draw[->][left,magenta,line width=0.5mm] (-3.3,-0.5) to[bend right] (-0.5,-2.5);
    \draw (-3.5,0) node[magenta,thick]{{\LARGE$B_0$}};
    \draw[->] (4,0) -- (6,0) node [midway,above] {$f$};
    \draw[->][left,line width=0.5mm] (-1.8,1.2) to[bend left] (-0.3,2.3);
    \draw[->][left,line width=0.5mm] (0.3,1.2)-- (0.1,2.3);
    \draw[->][left,line width=0.5mm] (1.7,1.2)to[bend right] (0.4,2.3);
    \draw[->][left,line width=0.5mm] (-1.8,-1.2)to[bend right] (-0.3,-2.3);
    \draw[->][left,line width=0.5mm] (-0.6,-1.2)to[bend right] (-0.1,-2.2);
    \draw[->][left,line width=0.5mm] (1.7,-1.2)to[bend left] (0.4,-2.3);
    \draw [thick](7,-2.5) -- (7,2.5); 
    \draw [thick](6.8,-2.5) -- (7.2,-2.5) node[right]{$-1$};
    \draw [thick](6.8,2.5) -- (7.2,2.5) node[right]{$-1$};
    \draw [thick](6.8,0) -- (7.2,0) node[right]{$0$};
    \draw[magenta,fill] (0,2.5) circle (2pt) node [above=1mm] {$n$};
    \draw[magenta,fill] (0,-2.5) circle (2pt) node [below=1mm] {$s$};
\end{tikzpicture}
\caption{The critical submanifolds of $f$. The black arrows represent negative gradient flow lines of $f$.}
\label{figure:S^2_-z^2_crit}
\end{figure}

Let $\sigma_n \colon \Delta^0 \to B_0$ and $\sigma_s\colon \Delta ^0 \to B_0$
denote the constant maps taking values $n$ and $s$, respectively.
We then have $\mathcal{C}_0(B_0)=\ZZ\sigma_n \oplus \ZZ\sigma_s$
and the map $\partial_{[0]} \colon \mathcal{C}_p(B_2)\to \mathcal{C}_{p-1}(B_2)$ is an isomorphism when $p$ is a positive even integer,
and it is the zero map otherwise.
Therefore, the Morse--Bott--Smale chain complex for $f$ is as follows:
\[
    \xymatrix@C=20pt@R=20pt{
    \cdots & 0 \ar@{}[d]|{\oplus} &  0 \ar@{}[d]|{\oplus}  & 0 \ar@{}[d]|{\oplus} & 0 \ar@{}[d]|{\oplus}& \\
    \cdots & \mathcal{C}_2(B_1) \ar[r]^-{\partial_{[0]}} \ar[dr]^-{\partial_{[1]}} \ar@{}[d]|{\oplus}& \mathcal{C}_1(B_1)\ar[r]^-{\partial_{[0]}} \ar[dr]^-{\partial_{[1]}} \ar@{}[d]|{\oplus} &\mathcal{C}_0(B_1) \ar[r]^-{\partial_{[0]}} \ar[dr]^-{\partial_{[1]}} \ar@{}[d]|{\oplus} & 0 \ar@{}[d]|{\oplus} & \\ 
    \cdots &\mathcal{C}_3(B_0) \ar[r]^-{0} \ar@{}[d]|{||} & \mathcal{C}_2(B_0) \ar[r]^-{\cong} \ar@{}[d]|{||} & \mathcal{C}_1(B_0) \ar[r]^-{0} \ar@{}[d]|{||} & \ZZ\sigma_n \oplus \ZZ\sigma_s \ar[r]^-{\partial_{[0]}} \ar@{}[d]|{||} & 0 \\
    \cdots & \CB_3(f,\mathcal{R}) \ar[r]^-{\bm{\partial}_3} & \CB_2(f,\mathcal{R}) \ar[r]^-{\bm{\partial}_2} & \CB_1(f,\mathcal{R}) \ar[r]^-{\bm{\partial}_1} & \CB_0(f,\mathcal{R}) \ar[r]^-{\bm{\partial}_0} & 0
    }
\]
A similar argument to Example \ref{example:Morse--Bott func 1} shows that
$\HB_k(f,\mathcal{R})=0$ for any $k\geq 3$.

Let $(c,d) \in \Ker{\bm{\partial}_2} \subset \mathcal{C}_1(B_1) \oplus \mathcal{C}_2(B_0)$.
Then $\partial_{[1]}(c)+\partial_{[0]}(d)=0$.
Since $\partial_{[0]}\colon\mathcal{C}_2(B_0)\to\mathcal{C}_1(B_0)$ is an isomorphism,
we have $d=-\partial_{[0]}^{-1}\bigl(\partial_{[1]}(c)\bigr)$.
Therefore, if $[c]$ generates $H_{1}(B_{1})\cong \ZZ$, then
$[(c,d)]$ generates $\HB_2(f,\mathcal{R})$.
Hence $\HB_2(f,\mathcal{R}) \cong \ZZ$.

Next we focus on the map $\partial_{[1]}\colon\mathcal{C}_0(B_1)\to\mathcal{C}_0(B_0)$.
Let $\sigma\colon \Delta^0\to B_1$ be a singular $C^f_0$-space in $B_1$.
By assigning appropriate orientations to $M$, the critical submanifolds, and their negative normal bundles,
we assume that the moduli space $\overline{\mathcal{M}}(B_1,B_0)$ is oriented as shown in Figure \ref{figure:S^2_-z^2_moduli}.

\begin{figure}[ht]
\centering
\begin{tikzpicture}
    \draw [thick](0,0) circle[radius=2.5];
    \draw [thick](-2.5,0) to [out=-30,in=-150] (2.5,0);
    \draw [thick,blue](-2.03,1.45) to [out=-30,in=-150]node{{\huge$<$}} (2.03,1.45);
    \draw [thick,dotted,blue](-2.03,1.45) to [out=30,in=150] (2.03,1.45);
    \draw [thick,red](-2.03,-1.45) to [out=-30,in=-150]node{{\huge$>$}} (2.03,-1.45);
    \draw [thick,dotted,red](-2.03,-1.45) to [out=30,in=150] (2.03,-1.45);
    \draw [dotted,thick](-2.5,0) to [out=20,in=160] (2.5,0);
    \draw (-4.5,0) node[red!50!blue,thick]{{\LARGE$\overline{\mathcal{M}}(B_{1},B_{0})$}};
    \draw (3.5,2) node[blue,thick]{{\LARGE$\overline{\mathcal{M}}(B_{1},\{n\})$}};
    \draw (3.5,-2) node[red,thick]{{\LARGE$\overline{\mathcal{M}}(B_{1},\{s\})$}};
    \draw (4.5,0) node{{\LARGE$\mathrm{ev}_-$}};
    \draw[->][line width=0.5mm] (4,0) -- (2.1,0.5);
    \draw[->][line width=0.5mm] (4,-0.2) -- (2.15,-1.1);
    \draw[->][line width=0.2mm](-1.7,1)to[bend right] (-2,-0.1);
    \draw[->][line width=0.2mm](-0.8,0.7)to[bend right] (-1,-0.4);
    \draw[->][line width=0.2mm](1.7,1)to[bend left] (2,-0.1);
    \draw[->][line width=0.2mm](0.8,0.7)to[bend left] (1,-0.4);
    \draw[->][line width=0.2mm](-1.7,-1.5)to[bend left] (-2.1,-0.6);
    \draw[->][line width=0.2mm](-0.8,-1.8)to[bend left] (-1.1,-0.8);
    \draw[->][line width=0.2mm](1.7,-1.5)to[bend right] (2.1,-0.6);
    \draw[->][line width=0.2mm](0.8,-1.8)to[bend right] (1.1,-0.8);
     \draw[->][blue,line width=0.5mm] (-4.3,0.5) -- (-2.1,1.5);
    \draw[->][red,line width=0.5mm] (-4.3,-0.5) -- (-2.1,-1.5);
    \draw (-2,3) node[below right,thick]{{\LARGE$S^2$}};
    \draw[fill] (0,2.5) circle (2pt) node [above=1mm] {$n$};
    \draw[fill] (0,-2.5) circle (2pt) node [below=1mm] {$s$};
\end{tikzpicture}
\caption{The moduli space $\overline{\mathcal{M}}(B_{1},B_{0})$.}
\label{figure:S^2_-z^2_moduli}
\end{figure}

Consider the fibered product $\Delta^0 \times_{B_1} \overline{\mathcal{M}}(B_1,B_0)$
of the map $\sigma$ and the beginning map $\mathrm{ev}_-\colon \overline{\mathcal{M}}(B_1,B_0) \to B_1$.
It consists of two connected component $\Delta^0 \times_{B_1} \overline{\mathcal{M}}(B_1,\{n\})$
and $\Delta^0 \times_{B_1} \overline{\mathcal{M}}(B_1,\{s\})$.
We note that the composition map
\[
    \mathrm{ev}_+\circ\mathrm{pr}_2%
    \colon \Delta^0 \times_{B_1} \overline{\mathcal{M}}(B_{1},\{n\})%
    \xrightarrow{\mathrm{pr}_2} \overline{\mathcal{M}}(B_{1},\{n\})%
    \xrightarrow{\mathrm{ev}_+} \{n\}
\]
can be identified with the mapping $\sigma_n$ by the degeneracy condition (Definition \ref{definition:deg con}) up to sign.
The same holds for $\sigma_s$.
The signs are determined by the orientations assigned to each connected component of
$\Delta^0\times_{B_1}\overline{\mathcal{M}}(B_1,B_0)$.
Since the orientations of each component of $\overline{\mathcal{M}}(B_{1},B_{0})$
are opposite to each other (see Figure \ref{figure:S^2_-z^2_moduli}),
we have $\partial_{[1]}(\sigma) = \pm(\sigma_n - \sigma_s)$.
Therefore,
\[
    \HB_0(f,\mathcal{R}) \cong \frac{\ZZ\sigma_n\oplus\ZZ\sigma_s}{\ZZ(\sigma_n-\sigma_s)} \cong \ZZ.
\]

Let $(c,d)\in\Ker{\bm{\partial}_1}\subset\mathcal{C}_0(B_1)\oplus\mathcal{C}_1(B_0)$.
Since $(0,d)\in\Image{\bm{\partial}_2}$, we have $[(c,d)]=[(c,0)]\in\HB_1(f,\mathcal{R})$.
Moreover, $\partial_{[1]}(c)=-\partial_{[0]}(d)=0$.
If $c=\sum_kn_k\sigma_k$, then $0=\partial_{[1]}(c)=\pm\sum_kn_k(\sigma_n-\sigma_s)$
and hence $\sum_kn_k=0$.
Therefore, $c\in\Image\bigl(\partial_{[0]}\colon\mathcal{C}_1(B_1)\to\mathcal{C}_0(B_1)\bigr)$.
It concludes that $\HB_1(f,\mathcal{R})=0$.
\end{example}

\begin{example}\label{example:Perfect Morse--Bott func}
Assume that $M$ is the torus $\mathbb{T}^2$ resting horizontally on the plane $z=0$ in $\RR^3$
and that $f\colon \mathbb{T}^2 \to \RR$ is a height function.
Then $B_0\approx B_1 \approx S^1$ and $B_2=\emptyset$.

\begin{figure}[ht]
\centering
\begin{tikzpicture}
    \draw[thick](0,0) circle [x radius=3,y radius=1.5,rotate=0];
    \draw[ultra thick,magenta,dotted](-2.2,-0.4) to [out=95,in=85] [distance=45pt](2.2,-0.4);
    \draw[ultra thick,magenta,dotted](-2.2,-0.4) to [out=-85,in=-95] [distance=35pt](2.2,-0.4);
    \draw[thick,teal](-2.2,0.2) to [out=95,in=85] [distance=45pt](2.2,0.2);
    \draw[thick,teal](-2.2,0.2) to [out=-85,in=-95] [distance=32pt](2.2,0.2);
    \draw (-1.7,0.2) to [out=-30,in=-150] (1.7,0.2);
    \draw (-1.3,0) to [out=30,in=150] (1.3,0);
    \draw (2.5,1.5) node[below right,teal,thick]{{\LARGE$B_1$}};
    \draw (2.5,-1) node[below right,magenta,thick]{{\LARGE$B_0$}};
    \draw (-3,2) node[below right,thick]{{\LARGE$\mathbb{T}^2$}};
\end{tikzpicture}
\caption{The critical submanifolds of $f$.}
\label{figure:T^2_crit}
\end{figure}

The Morse--Bott--Smale chain complex is as follows:
\[
    \xymatrix@C=20pt@R=20pt{
    \cdots & 0 \ar@{}[d]|{\oplus} &  0 \ar@{}[d]|{\oplus}  & 0 \ar@{}[d]|{\oplus} & 0 \ar@{}[d]|{\oplus}& \\
    \cdots & \mathcal{C}_2 (B_1) \ar[r]^{\partial_{[0]}}  \ar[dr]^{\partial_{[1]}} \ar@{}[d]|{\oplus}& \mathcal{C}_1 (B_1)\ar[r]^{\partial_{[0]}} \ar[dr]^{\partial_{[1]}} \ar@{}[d]|{\oplus} &\mathcal{C}_0 (B_1) \ar[r]^-{\partial_{[0]}}\ar[dr]^{\partial_{[1]}}  \ar@{}[d]|{\oplus} & 0 \ar@{}[d]|{\oplus} &  \\ 
    \cdots &\mathcal{C}_3 (B_0) \ar[r]^{\partial_{[0]}} \ar@{}[d]|{||} & \mathcal{C}_2 (B_0) \ar[r]^{\partial_{[0]}}  \ar@{}[d]|{||} & \mathcal{C}_1 (B_0) \ar[r]^{\partial_{[0]}} \ar@{}[d]|{||} &\mathcal{C}_0 (B_0) \ar[r]^-{\partial_{[0]}}  \ar@{}[d]|{||} & 0 \\
    \cdots & \CB_3(f,\mathcal{R}) \ar[r]^-{\bm{\partial}_3} & \CB_2(f,\mathcal{R}) \ar[r]^-{\bm{\partial}_2} & \CB_1(f,\mathcal{R}) \ar[r]^-{\bm{\partial}_1} & \CB_0(f,\mathcal{R}) \ar[r]^-{\bm{\partial}_0} & 0
    }
\]
A similar argument to Example \ref{example:Morse--Bott func 1} shows that
$\HB_k(f,\mathcal{R})=0$ for any $k\geq 3$.

By assigning appropriate orientations to $M$, the critical submanifolds, and their negative normal bundles,
we assume that the moduli space $\overline{\mathcal{M}}(B_1,B_0)$ is oriented as shown in Figure \ref{figure:T^2_moduli}.

\begin{figure}[ht]
\centering
\begin{tikzpicture}
    \draw[thick](0,0) circle [x radius=3,y radius=1.5,rotate=0];
    \draw[ultra thick,blue,dotted](-3,0) to [out=89,in=91] [distance=45pt](3,0);
    \draw[thick,teal,blue](-3,0) to [out=-91,in=-89] [distance=38pt]node{{\huge$>$}}(3,0);
    \draw[thick,red](-1.5,0.1) to [out=80,in=110] [distance=25pt]node{{\huge$>$}}(1.5,0.1);
    \draw[ultra thick,red,dotted](-1.5,0.1) to [out=-100,in=-70] [distance=20pt](1.5,0.1);
    \draw (-1.7,0.2) to [out=-30,in=-150] (1.7,0.2);
    \draw (-1.3,0) to [out=30,in=150] (1.3,0);
    \draw (-3,2) node[below right,thick]{{\LARGE$\mathbb{T}^2$}};
    \draw (-4.5,0) node[red!50!blue,thick]{{\LARGE$\overline{\mathcal{M}}(B_{1},B_{0})$}};
    \draw (3.1,1) node[blue,thick]{{\LARGE$S^1_+$}};
    \draw (2.2,-0.2) node[red,thick]{{\LARGE$S^1_-$}};
\end{tikzpicture}
\caption{The moduli space $\overline{\mathcal{M}}(B_{1},B_{0})=S^1_+\sqcup S^1_-$.}
\label{figure:T^2_moduli}
\end{figure}

For each $p$ we consider the map $\partial_{[1]} \colon \mathcal{C}_{p}(B_1)\to \mathcal{C}_{p}(B_0)$.
Let $\sigma\colon\Delta^p\to B_1$ be a map.
Consider the fibered product $\Delta^p\times_{B_1}\overline{\mathcal{M}}(B_1,B_0)$
of the maps $\sigma$ and $\mathrm{ev}_-$.
It consists of two connected component $S^1_+$ and $S^1_-$ as in Figure \ref{figure:T^2_moduli}.
We define $\sigma_{\pm}$ to be the composition map
\[
    \mathrm{ev}_+\circ\mathrm{pr}_2%
    \colon S^1_{\pm}%
    \hookrightarrow \Delta^p\times_{B_1}\overline{\mathcal{M}}(B_1,B_0)%
    \xrightarrow{\mathrm{pr}_2} \overline{\mathcal{M}}(B_1,B_0)%
    \xrightarrow{\mathrm{ev}_+} B_0.
\]
Then we have $\partial_{[1]}(\sigma) = \sigma_+ + \sigma_-$.

We note that the connected components $S^1_+$ and $S^1_-$ have opposite orientations.
In particular, for the case $p=0$
we have $\sigma_+ + \sigma_- = 0$ from the degeneracy condition (Definition \ref{definition:deg con}).
Thus the map $\partial_{[1]} \colon \mathcal{C}_0(B_1)\to \mathcal{C}_0(B_0)$ is zero.
For $p\geq 1$, the degeneracy condition implies that
$\sigma_+$ and $\sigma_-$ form chains in $B_{0}\approx S^{1}$ that follow opposite orientations.
Therefore, $\partial_{[1]}(\sigma)$ determines the trivial homology class in $H_p(B_0)$.
In particular, even if $c\in\mathcal{C}_1(B_1)$ is a chain representing a generator of $H_1(B_1)\cong \ZZ$,
$\partial_{[1]}(c)$ still defines the trivial homology class in $H_1(B_0)\cong \ZZ$.
Since the row of $B_0\approx S^1$ is the singular simplicial chain complex up to sign,
there exists a chain $d\in\mathcal{C}_2(B_0)$
such that $\partial_{[0]}(d)=-\partial_{[1]}(c)$.
Hence, $(c,d)\in\mathcal{C}_1(B_1)\oplus\mathcal{C}_2(B_0)$ generates $\HB_2(f,\mathcal{R})\cong\ZZ$.
Moreover, generators of each $H_0(B_1)\cong\ZZ$ and $H_1(B_0)\cong\ZZ$ generate $\HB_1(f,\mathcal{R})\cong \ZZ\oplus \ZZ$.
\end{example}


\begin{example}\label{example:Morse--Bott func 4}
Assume that $M=\mathbb{T}^2$ and that
$f\colon\mathbb{T}^2\to\RR$ is a function whose critical submanifolds and the negative gradient flows,
reducing the Morse--Bott index of $f$ by one, are as shown in Figure \ref{figure:deformed_T^2_crit}.
We note that, unlike the height function, the negative gradient flow lines from $B_1$ to $B_0$
are slightly tilted in each region where the curvature of the torus is positive or negative.

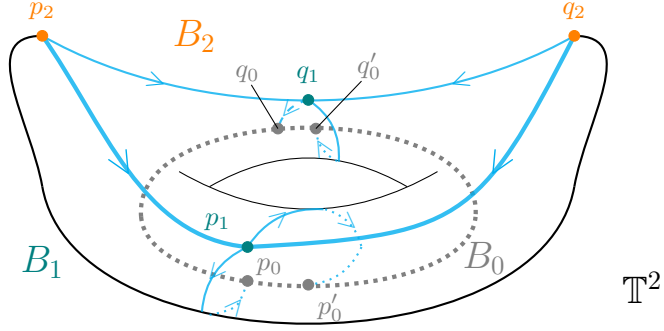
\begin{figure}[ht]
\centering
\begin{tikzpicture}
    \draw[ultra thick,gray,dotted](-2.2,-0.4) to [out=95,in=85] [distance=45pt](2.2,-0.4);
    \draw[ultra thick,gray,dotted](-2.2,-0.4) to [out=-85,in=-95] [distance=35pt](2.2,-0.4);
    \draw[thick](-3.5,0) to [out=-80,in=-100] [distance=70pt](3.5,0);    
    \draw[thick,cyan,opacity=0.8](-3.5,2) to [out=-10,in=-170] [bend right,distance=65pt](3.5,2);
    \draw[ultra thick,cyan,opacity=0.8](-3.5,2) to [out=-55,in=175] [distance=40pt](-0.8,-0.8);
    \draw[ultra thick,cyan,opacity=0.8](3.5,2) to [out=-135,in=0] [bend left ,distance=80pt](-0.8,-0.8);
    \draw[thick](-3.5,0) to [out=100,in=-180] [distance=25pt](-3.5,2);
    \draw[thick](3.5,0) to [out=80,in=0] [distance=25pt](3.5,2);
    \draw (-1.7,0.2) to [out=-30,in=-150] (1.7,0.2);
    \draw (-1.3,0) to [out=30,in=150] (1.3,0);
    \draw[thick,cyan,opacity=0.8](-0.8,-0.8) to [out=-140,in=80] [bend right]node{\rotatebox{50}{$<$}}(-1.4,-1.66);
    \draw[thick,cyan,dotted,opacity=0.8](-1.3,-1.7) to [out=40,in=-120] [bend right,distance=11pt]node{\rotatebox{50}{$>$}}(-0.8,-1.25);
    \draw[thick,cyan,opacity=0.8](-0.8,-0.8) to [out=30,in=-160] [bend left]node{\rotatebox{40}{$>$}}(0.2,-0.3);
    \draw[thick,cyan,dotted,opacity=0.8](0.22,-0.3) to [out=0,in=10] [bend left]node{\rotatebox{-40}{$>$}}(0.7,-0.8);
    \draw[thick,cyan,dotted,opacity=0.8](0.7,-0.8) to [out=-90,in=10] [bend left](0,-1.3);
    \draw[thick,cyan,dashed,opacity=0.8](0,1.15) to [out=-140,in=60] [bend right]node{\rotatebox{50}{$<$}}(-0.4,0.77);
    \draw[thick,cyan,opacity=0.8](0,1.15) to [out=-10,in=80] [bend left](0.4,0.33);
    \draw[thick,cyan,dotted,opacity=0.8](0.38,0.34) to [out=90,in=-80] [bend left]node{\rotatebox{-60}{$<$}}(0.1,0.77);
    \draw (1.9,-0.6) node[below right,gray,thick]{{\LARGE$B_0$}};
    \draw (-3.5,-1) node[teal,thick]{{\LARGE$B_1$}};
    \draw (-1.5,2) node[orange,thick]{{\LARGE$B_2$}};
    \draw (4,-1) node[below right,thick]{{\LARGE$\mathbb{T}^2$}};
    \draw[fill,orange] (-3.5,2) circle (2pt) node [above=1mm] {$p_2$};
    \draw[fill,orange] (3.5,2) circle (2pt) node [above=1mm] {$q_2$};
    \draw[thick,cyan] (-2.0,1.4) node{\rotatebox{-20}{$>$}};
    \draw[thick,cyan] (2.0,1.4) node{\rotatebox{20}{$<$}};
    \draw[ultra thick,cyan] (-2.44,0.3) node{\rotatebox{-60}{\LARGE$>$}};
    \draw[ultra thick,cyan] (2.5,0.3) node{\rotatebox{60}{\LARGE$<$}};
    \draw[fill,teal] (-0.8,-0.8) circle (2pt) node [above left=1mm] {$p_1$};
    \draw[fill,teal] (0,1.15) circle (2pt) node [above=1mm] {$q_1$};
    \draw[fill,gray] (-0.8,-1.25) circle (2pt);
    \draw[gray] (-0.6,-1.1) node [above right=-2mm] {$p_0$};
    \draw[fill,gray] (0,-1.3) circle (2pt) node [below right] {$p'_0$};
    \draw[fill,gray] (0.1,0.77) circle (2pt);
    \draw (0.1,0.77) -- (0.7,1.4);
    \draw[gray] (0.8,1.6) node{{$q'_0$}};
    \draw[fill,gray] (-0.4,0.77) circle (2pt);
    \draw (-0.4,0.77) -- (-0.75,1.35);
    \draw[gray] (-0.8,1.5) node{{$q_0$}};
\end{tikzpicture}
\caption{The critical submanifolds of $f$.}
\label{figure:deformed_T^2_crit}
\end{figure}

Then we have $B_0\approx S^1$, $B_1=\{p_1,q_1\}$, and $B_2=\{p_2,q_2\}$.
The Morse--Bott--Smale chain complex is as follows:
\[
    \xymatrix@C=25pt@R=25pt{
    \cdots & \mathcal{C}_1(B_2) \ar[r]^-{0} \ar[dr]|-{\partial_{[1]}} \ar[ddr]|(.3){\partial_{[2]}}|\hole \ar@{}[d]|{\oplus} & \ZZ p_2 \oplus \ZZ q_2 \ar[r]^-{\partial_{[0]}} \ar[dr]|-{\partial_{[1]}} \ar[ddr]|(.3){\partial_{[2]}}|\hole \ar@{}[d]|{\oplus}  & 0 \ar@{}[d]|{\oplus} & 0 \ar@{}[d]|{\oplus}& \\
    \cdots & \mathcal{C}_2(B_1) \ar[r]^(.6){\cong} \ar[dr]|-{\partial_{[1]}} \ar@{}[d]|{\oplus}& \mathcal{C}_1(B_1) \ar[r]^(.6){0} \ar[dr]|-{\partial_{[1]}} \ar@{}[d]|{\oplus} &\ZZ p_1 \oplus \ZZ q_1 \ar[r]^-{\partial_{[0]}}\ar[dr]|-{\partial_{[1]}} \ar@{}[d]|{\oplus} & 0 \ar@{}[d]|{\oplus} &  \\ 
    \cdots &\mathcal{C}_3(B_0) \ar[r]^-{\partial_{[0]}} \ar@{}[d]|{||} & \mathcal{C}_2(B_0) \ar[r]^-{\partial_{[0]}} \ar@{}[d]|{||} & \mathcal{C}_1(B_0) \ar[r]^-{\partial_{[0]}} \ar@{}[d]|{||} &\mathcal{C}_0(B_0) \ar[r]^-{\partial_{[0]}} \ar@{}[d]|{||} & 0 \\
    \cdots & \CB_3(f,\mathcal{R}) \ar[r]^-{\bm{\partial}_3} & \CB_2(f,\mathcal{R}) \ar[r]^-{\bm{\partial}_2} & \CB_1(f,\mathcal{R}) \ar[r]^-{\bm{\partial}_1} & \CB_0(f,\mathcal{R}) \ar[r]^-{\bm{\partial}_0} & 0
    }
\]
It is straightforward to show that $\HB_{k}(f,\mathcal{R})=0$ for any $k\geq 3$.

By assigning appropriate orientations to the manifold $M$,
the critical submanifolds, and their negative normal bundles,
assume that the moduli spaces $\overline{\mathcal{M}}(B_2,B_0)$ and $\overline{\mathcal{M}}(B_1,B_0)$
are oriented as shown in Figure \ref{figure:deformed_T^2_moduli}.

\begin{figure}[ht]
\centering
\begin{tikzpicture}
    \draw[thick,dotted](-2.2,-0.4) to [out=95,in=85] [distance=45pt](2.2,-0.4);
    \draw[thick,dotted](-2.2,-0.4) to [out=-85,in=-95] [distance=35pt](2.2,-0.4);
    \draw[thick](-3.5,0) to [out=-80,in=-100] [distance=70pt](3.5,0);    
    \draw[thick](-3.5,2) to [out=-10,in=-170] [bend right,distance=65pt](3.5,2);
    \draw[thick](-3.5,2) to [out=-55,in=175] [distance=40pt](-0.8,-0.8);
    \draw[thick](3.5,2) to [out=-135,in=0] [bend left ,distance=80pt](-0.8,-0.8);
    \draw[thick](-3.5,0) to [out=100,in=-180] [distance=25pt](-3.5,2);
    \draw[thick](3.5,0) to [out=80,in=0] [distance=25pt](3.5,2);
    \draw (-1.7,0.2) to [out=-30,in=-150] (1.7,0.2);
    \draw (-1.3,0) to [out=30,in=150] (1.3,0);
    \draw[thick](-0.8,-0.8) to [out=-140,in=80] [bend right]node{\rotatebox{50}{$<$}}(-1.4,-1.66);
    \draw[thick,dotted](-1.3,-1.7) to [out=40,in=-120] [bend right,distance=11pt]node{\rotatebox{50}{$>$}}(-0.8,-1.25);
    \draw[thick](-0.8,-0.8) to [out=30,in=-160] [bend left]node{\rotatebox{40}{$>$}}(0.2,-0.3);
    \draw[thick,dotted](0.22,-0.3) to [out=0,in=10] [bend left]node{\rotatebox{-40}{$>$}}(0.7,-0.8);
    \draw[thick,dotted](0.7,-0.8) to [out=-90,in=10] [bend left](0,-1.3);
    \draw[thick,dashed](0,1.15) to [out=-140,in=60] [bend right]node{\rotatebox{50}{$<$}}(-0.4,0.77);
    \draw[thick](0,1.15) to [out=-10,in=80] [bend left](0.4,0.33);
    \draw[thick,dotted](0.38,0.34) to [out=90,in=-80] [bend left]node{\rotatebox{-60}{$<$}}(0.1,0.77);
    \draw (3,-1) node[below right,thick]{{\LARGE$\mathbb{T}^2$}};
    \draw[fill] (-3.5,2) circle (2pt) node [above=1mm] {$p_2$};
    \draw[fill] (3.5,2) circle (2pt) node [above=1mm] {$q_2$};
    \draw[thick] (-2.0,1.4) node{\rotatebox{-20}{$>$}};
    \draw[thick] (2.0,1.4) node{\rotatebox{20}{$<$}};
    \draw[ultra thick] (-2.44,0.3) node{\rotatebox{-60}{\LARGE$>$}};
    \draw[ultra thick] (2.5,0.3) node{\rotatebox{60}{\LARGE$<$}};
    \draw[fill] (-0.8,-0.8) circle (2pt) node [above left=1mm] {$p_1$};
    \draw[fill] (0,1.15) circle (2pt) node [above=1mm] {$q_1$};
    \draw[fill] (-0.8,-1.25) circle (2pt);
    \draw (-0.6,-1.1) [above right=-2mm] node{$p_0$};
    \draw[fill] (0,-1.3) circle (2pt) node [below right] {$p'_0$};
    \draw[fill] (0.1,0.77) circle (2pt);
    \draw (0.1,0.77) -- (0.7,1.4);
    \draw (0.8,1.6) node{{$q'_0$}};
    \draw[fill] (-0.4,0.77) circle (2pt);
    \draw (-0.4,0.77) -- (-0.75,1.35);
    \draw (-0.8,1.5) node{{$q_0$}};
    \draw[ultra thick,blue](-2.3,0.1) to [out=80,in=-100] node{\LARGE\rotatebox{50}{$>$}}(-1.5,1.3);
    \draw[fill,blue] (-2.3,0.1) circle (2pt);
    \draw[fill,blue] (-1.5,1.3) circle (2pt);
    \draw[ultra thick,blue](-3.65,0.5) to [out=0,in=-170] [bend right]node{\LARGE\rotatebox{0}{$>$}}(-2.55,0.5);
    \draw[ultra thick,blue,dotted](-3.6,0.5) to [out=80,in=-170] (-2.5,1.55);
    \draw[fill,blue](-2.5,1.55) circle (2pt);
    \draw[fill,blue](-2.55,0.5) circle (2pt);
    \draw[ultra thick,blue](1.8,1.35) to [out=0,in=-170] [bend right]node{\LARGE\rotatebox{-50}{$>$}}(2.65,0.5);
    \draw[fill,blue](1.8,1.35)  circle (2pt);
    \draw[fill,blue](2.65,0.5)  circle (2pt);
    \draw[ultra thick,blue](3,1.1) to [out=0,in=-170] [bend right]node{\LARGE\rotatebox{20}{$>$}}(3.85,1.2);
    \draw[fill,blue](3,1.1)  circle (2pt);
    \draw[ultra thick,blue,dotted](3.85,1.2) to [out=150,in=10] [bend right](2.5,1.55);
    \draw[fill,blue](2.5,1.55)  circle (2pt);
    \draw[fill,red](-0.3,1)  circle (2pt);
    \draw[fill,red](0.35,0.7)  circle (2pt);
    \draw[fill,red](0.1,-0.3)  circle (2pt);
    \draw[fill,red](-1.3,-1.3)  circle (2pt);
    \draw (0,2.2) node[blue,thick]{{\LARGE$\overline{\mathcal{M}}(B_{2},B_{0})$}};
    \draw (-2.5,-2) node[red,thick]{{\LARGE$\overline{\mathcal{M}}(B_{1},B_{0})$}};
\end{tikzpicture}
\caption{The moduli spaces $\overline{\mathcal{M}}(B_2,B_0)$ and $\overline{\mathcal{M}}(B_1,B_0)$.}
\label{figure:deformed_T^2_moduli}
\end{figure}
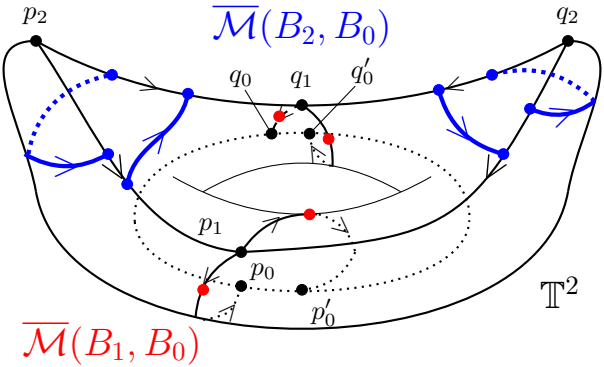

Let $[kp_2+\ell q_2,c,d]\in\HB_2(f,\mathcal{R})$.
We note that $[kp_2+\ell q_2,c,d] = [kp_2+\ell q_2,0,d']$ for some $d'$.
Indeed, for any $c\in\mathcal{C}_1(B_1)$ we have
\[
    \left(0,c,\partial_{[1]}\bigl(\partial_{[0]}^{-1}(c)\bigr)\right) = \bm{\partial}_3\left(0,\partial_{[0]}^{-1}(c),0\right) \in \Image{\bm{\partial}_3}\subset (\ZZ p_2 \oplus \ZZ q_2)\oplus \mathcal{C}_1(B_1)\oplus \mathcal{C}_2(B_0)
\]
Since $\bm{\partial}_2(k p_2+\ell q_2,0,d') = 0$,
we have $\partial_{[2]}(k p_2+\ell q_2)+\partial_{[0]}(d')=0$
and
\[
    0=\partial_{[1]}(k p_2+\ell q_2)=\pm(k-\ell )(p_1 -q_1),
\]
where the sign is determined by the orientation of the negative normal bundle of $B_{1}$.
Thus, we obtain $k=\ell$.
Therefore, every generator of $\HB_2(f,\mathcal{R})$ is of the form either
$[(p_2+q_2,0,*)]$ or $[(0,0,*)]$.
However, since $H_2(B_1;\ZZ)=0$, the latter cannot generate $\HB_2(f,\mathcal{R})$.

Conversely, we claim that there exists a singular simplicial chain $e\in \mathcal{C}_2(B_0)$
such that $\partial_{[0]}(e)=\partial_{[2]}(p_2+q_2)$. 
First we compute $\partial_{[2]}(p_2)=\sigma_+ + \sigma_-$ as in Example \ref{example:Perfect Morse--Bott func}.
Since the orientations of the connected component of the moduli space
$\overline{\mathcal{M}}(B_2,B_0)$ are reversed on the domains of $\sigma_+$ and $\sigma_-$,
these domains inherit opposite orientations.
Hence, via the corresponding representing chains,
$\sigma_+$ and $\sigma_-$ become chains in $B_{0}\approx S^1$ that follow opposite orientations.
Similarly, we have $\partial_{[2]}(q_2)=\tau_+ +\tau_-$ for suitable chains $\tau_+$ and $\tau_-$ in $B_0$.
In this case, both $\sigma_+ +\tau_+$ and $\sigma_- +\tau_-$ represent generators of $H_1(B_0)\cong \ZZ$,
whereas $\partial_{[2]}(p_2+q_2)$ represents the trivial homology class in $H_1(B_0)$.
Since the Morse--Bott--Smale chain complex restricted to the row of $B_{0}$ coincides with the singular simplicial chain complex of $B_0$ up to sign,
there exists a chain $e\in \mathcal{C}_2(B_0)$ such that $\partial_{[0]}(e)=\partial_{[2]}(p_2+q_2)$.
Hence $(p_2+q_2,0,-e)\in\Ker{\bm{\partial}_2}$.
On the other hand,
since $\partial_{[0]}\colon\mathcal{C}_1(B_2)\to\ZZ p_2\oplus\ZZ q_2$ is zero,
we have $(p_2+q_2,0,-e)\not\in\Image{\bm{\partial}_3}$.
Therefore, $\HB_2(f,\mathcal{R})=\ZZ[(p_2+q_2,0,-e)]\cong\ZZ$.

We note that, depending on the chosen orientation,
$\partial_{[1]}(p_1)=p_0-p'_0$ and $\partial_{[1]}(q_1)=q_0-q'_0$ hold.
Then one can confirm that $\HB_0(f,\mathcal{R})\cong\ZZ$.
Now we want to find a non-trivial homology class in $\HB_1(f,\mathcal{R})$.
Let $(k p_1+\ell q_1,c) \in \Ker{\bm{\partial}_1} \subset \mathcal{C}_0(B_1)\oplus \mathcal{C}_1(B_0)$.
Then $\partial_{[1]}(kp_1 +\ell q_1)+\partial_{[0]}(c)=0$.
Since $\partial_{[1]} \colon \ZZ p_1 \oplus \ZZ q_1 \to \mathcal{C}_0(B_0)$ is injective,
if $c$ is a cycle of a generator of $H_1(B_0;\ZZ)\cong\ZZ$,
then we have $k=\ell =0$.
In this case, the class $\alpha_3 =[(0,c)]$ defines a non-trivial homology class in $\HB_1(f,\mathcal{R})$.
On the other hand,
since we know that
\[
    \Image{\bigl(\partial_{[1]} \colon \ZZ p_2 \oplus \ZZ q_2 \to \ZZ p_1 \oplus \ZZ q_1\bigr)} = \ZZ \left(p_1-q_1\right),
\]
it is enough to consider the case of $\ell=0$.
The condition $\bm{\partial}_1(kp_1,c) =0$ implies that $c$ is represented by a chain $k\sigma$,
where $\sigma\colon \Delta ^1 \to B_0$ is a smooth map satisfying $\sigma(0)=p'_0$ and $\sigma(1)=p_0$.
We note that the singular 1-simplex $\sigma$ in $B_0$ is not a cycle.
Let $\sigma_1$, $\sigma_2 \colon \Delta^1 \to B_0$ be maps from $p'_0$ to $p_0$
such that the difference $\sigma_1 -\sigma_2$ represents a generator of $H_1(B_0;\ZZ)\cong\ZZ$.
Then the classes $\alpha_1=[(p_1,\sigma_1)]$ and $\alpha_2=[(p_1,\sigma_2)]$
define non-trivial homology classes in $\HB_1(f,\mathcal{R})$.
Since $\alpha_1-\alpha_2=\alpha_3$, it follows that $\alpha_1 \neq \alpha_2$.
Therefore, we conclude that
\[
    \HB_1(f,\mathcal{R})
    = \frac{\ZZ\alpha_1 \oplus \ZZ\alpha_2 \oplus \ZZ\alpha_3}{\ZZ(\alpha_1 -\alpha_2 - \alpha_3)}%
    \cong \ZZ\oplus\ZZ.
\]
\end{example}


\section{Application: Another proof of the Morse Homology Theorem}\label{section:A proof of the MH thm}

In this section, we provide another proof of the Morse Homology Theorem.
Let $(M,g)$ be a closed oriented Riemannian manifold.
Let $f\colon M\to\RR$ be a Morse--Smale function on $(M,g)$.
Let $B_i$ be the set of critical points of $f$ on index $i$.
We assume that the unstable manifolds of all critical points of $f$ are oriented.
We fix a good representing chain system $\mathcal{R}$ for $f$, which consists of relative cycles
that represent the positive relative fundamental class.
In particular, when a set $P$ has dimension zero,
the constant map with the same sign as the orientation on $P$ is chosen as its representing chain.
Consider the Morse--Bott--Smale chain complex $(\CB_*(f,\mathcal{R}),\bm{\partial}_*)$ for $f$.

\begin{lemma}\label{lemma:diagonal_is_MSW}
For any $k=0$,~$1$, \dots, $\dim{M}$
the homomorphism $\partial_{[1]}\colon\mathcal{C}_0(B_k)\to\mathcal{C}_0(B_{k-1})$
can be identified with the homomorphism
$\partial_k^{\mathrm{Morse}}\colon\CM_k(f)\to\CM_{k-1}(f)$
$($recall Section \ref{Section:Morse homology} for the definition$)$.
Moreover, the pair $(\mathcal{C}_0(B_*),\{\partial_{[1]}\})$ is a chain complex.
In particular, the pair $(\CM_*(f),\partial_*^{\mathrm{Morse}})$
is a chain complex.
\end{lemma}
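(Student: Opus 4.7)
The plan is to split the statement into three tasks handled in order: identify $\mathcal{C}_0(B_k)$ with $\CM_k(f)$ as abelian groups, show that under this identification $\partial_{[1]}$ becomes $\partial^{\mathrm{Morse}}_k$, and finally deduce $\partial_{[1]}^{2}=0$ from Proposition~\ref{proposition:boundary operator}. Since $f$ is Morse--Smale, every $B_k$ is a finite set of isolated critical points, and each element of $C^f_0$ is therefore a single oriented point whose good representing chain is (up to sign) a constant singular $0$-simplex. Consequently Definition~\ref{definition:deg con} forces every class in $\mathcal{C}_0(B_k)$ to be a $\mathbb{Z}$-linear combination of the constant maps $\sigma_p\colon\Delta^0\to B_k$ at the critical points $p\in B_k$, giving the tautological isomorphism $\mathcal{C}_0(B_k)\cong\mathbb{Z}[B_k]=\CM_k(f)$ with $\sigma_p\leftrightarrow p$.

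For the second task I would unravel the definition of $\partial_{[1]}$ on the generator $\sigma_p$: it is the composition $\mathrm{ev}_+\circ\mathrm{pr}_2\colon\Delta^0\times_{B_k}\overline{\mathcal{M}}(B_k,B_{k-1})\to B_{k-1}$, whose domain is canonically identified with $\overline{\mathcal{M}}(p,B_{k-1})=\mathcal{M}(p,B_{k-1})$. Because $\dim B_k=\dim B_{k-1}=0$, this moduli space is zero-dimensional and consists of finitely many oriented rigid flow lines from $p$ to index-$(k-1)$ critical points. The degeneracy condition applied to this $0$-dimensional fibered product collapses each oriented point to $\pm\sigma_q$ with the orientation sign of the trajectory, so summing yields $\partial_{[1]}(\sigma_p)=\sum_{q\in B_{k-1}}n(p,q)\,\sigma_q$, which matches $\partial^{\mathrm{Morse}}_k(p)$.

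For the third task I would specialize Proposition~\ref{proposition:boundary operator} at $j=2$ to $\sigma_p$:
\[
\partial_{[0]}\partial_{[2]}(\sigma_p)+\partial_{[1]}^{2}(\sigma_p)+\partial_{[2]}\partial_{[0]}(\sigma_p)=0.
\]
Since $\partial_{[0]}(\sigma_p)=(-1)^{k}\partial_0(\sigma_p)=0$, the third summand vanishes and it suffices to show $\partial_{[0]}\partial_{[2]}(\sigma_p)=0$. Now $\partial_{[2]}(\sigma_p)$ is represented by the composition of a good triangulation of the compact oriented $1$-manifold with boundary $\overline{\mathcal{M}}(p,B_{k-2})=\coprod_{q\in B_{k-2}}\overline{\mathcal{M}}(p,q)$ with the endpoint map $\mathrm{ev}_+$, which is constant equal to $q$ on each component $\overline{\mathcal{M}}(p,q)$. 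Taking the simplicial boundary and pushing forward to $B_{k-2}$ produces, component by component, the signed count of endpoints of the interval pieces of $\overline{\mathcal{M}}(p,q)$; on each interval the two endpoints contribute $+\sigma_q$ and $-\sigma_q$, hence cancel. This yields $\partial_{[0]}\partial_{[2]}(\sigma_p)=0$ and consequently $\partial_{[1]}^{2}(\sigma_p)=0$.

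The main obstacle I anticipate is the sign bookkeeping in the last step: one must verify that the simplicial boundary of the good representing chain of $\overline{\mathcal{M}}(p,q)$ truly coincides with the oriented boundary of the $1$-manifold, reconciling the conventions of Example~\ref{example:face abs. simpl. cpx.}, Definition~\ref{definition:orientation of fiber product}, and Definition~\ref{definition:good rep.sys.}. Once this matching is in place the endpoint cancellations are automatic, but establishing it rigorously is where most of the care is required.
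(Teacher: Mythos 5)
Your first two steps — identifying $\mathcal{C}_0(B_k)\cong\CM_k(f)$ via constant maps at critical points, and unraveling $\partial_{[1]}(\sigma_p)$ into the signed count of rigid flow lines — match the paper's argument exactly. You also specialize Proposition~\ref{proposition:boundary operator} to $j=2$ and kill the term $\partial_{[2]}\partial_{[0]}$ just as the paper does. Where you diverge is the justification of $\partial_{[0]}\partial_{[2]}(\sigma_p)=0$: you argue geometrically that the signed boundary points of the $1$-manifold $\overline{\mathcal{M}}(p,B_{k-2})$ cancel in pairs after pushing forward by $\mathrm{ev}_+$, which is essentially the classical Morse-theoretic proof of $(\partial^{\mathrm{Morse}})^2=0$ and does require the orientation bookkeeping you flag. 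The paper instead makes a structural observation that avoids this: since $f$ is Morse, each $B_i$ is a finite discrete set, so the row $(\mathcal{C}_*(B_i),\partial_{[0]})$ agrees with the singular simplicial chain complex of a finite point set, whose positive-degree homology vanishes; hence $\partial_{[0]}\colon\mathcal{C}_k(B_i)\to\mathcal{C}_{k-1}(B_i)$ is an isomorphism for $k$ even positive and \emph{identically zero} otherwise, so in particular $\partial_{[0]}$ vanishes on all of $\mathcal{C}_1(B_{k-2})$, not merely on the image of $\partial_{[2]}$. In fact, a shortcut is available to you as well: each $\mathrm{ev}_+\circ\mathrm{pr}\circ\sigma_\alpha\colon\Delta^1\to B_{k-2}$ is automatically a constant singular $1$-simplex (since $B_{k-2}$ is discrete and $\Delta^1$ is connected), so its simplicial boundary vanishes term by term before any pairwise cancellation is invoked; this removes the sign bookkeeping and also handles any circle components of the moduli space, which your interval-by-interval phrasing leaves implicit.
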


\begin{proof}
It is obvious that $\mathcal{C}_0(B_k)=\CM_k(f)$
by identifying a constant map $\sigma\colon\Delta^0\to B_k$ with its value $q=\sigma(\Delta^0)$.
Here we note that other maps $\dotDelta^0\to B_k$ taking the same value $q$
are equivalent to each other due to the degeneracy condition (Definition \ref{definition:deg con}).
We recall that the homomorphism $\partial_{[1]}\colon\mathcal{C}_0(B_k)\to\mathcal{C}_0(B_{k-1})$
sends a constant map $\sigma\colon\Delta^0\to B_k$ to the map
$\mathrm{ev}_+\circ\mathrm{pr}_2\colon\Delta^0\times_{B_k}\mathcal{M}(B_k,B_{k-1})\to B_{k-1}$,
which is the formal sum
\[
    \sum_{p\in B_{k-1}}\bigl(\Delta^0\times_{\{q\}}\mathcal{M}(\{q\},\{p\})\to \{p\}\bigr),
\]
where $q=\sigma(\Delta^0)$.
Therefore, we deduce that
$\partial_{[1]}(\sigma)=\partial_k^{\mathrm{Morse}}(q)$.

Now we shall show that the square
$\partial_{[1]}\circ\partial_{[1]}\colon\mathcal{C}_0(B_k)\to\mathcal{C}_0(B_{k-2})$ is zero.
We recall that the restriction of the Morse--Bott--Smale chain complex for $f$ to each row
\[
    \cdots \xrightarrow{\partial_{[0]}} \mathcal{C}_{k+1}(B_i) \xrightarrow{\partial_{[0]}} \mathcal{C}_k(B_i) \xrightarrow{\partial_{[0]}} \mathcal{C}_{k-1}(B_i) \xrightarrow{\partial_{[0]}} \cdots
\]
coincides with the (smooth) singular simplicial chain complex of $B_i$ up to sign.
Since $f$ is Morse, the set $B_i$ is finite,
and hence the singular homology group $H_k(B_i;\ZZ)$ vanishes whenever $k>0$.
Therefore, the homomorphism $\partial_{[0]}\colon\mathcal{C}_k(B_i)\to\mathcal{C}_{k-1}(B_i)$
is an isomorphism if $k$ is an even positive integer, and it is zero otherwise.
Thus, the Morse--Bott--Smale chain complex for the Morse--Smale function $f$ is as follows. 
\[
    \xymatrix@C=20pt@R=20pt{
    \ddots & \vdots \ar@{}[d]|{\oplus} &\vdots \ar@{}[d]|{\oplus} &\vdots \ar@{}[d]|{\oplus} &\vdots \ar@{}[d]|{\oplus} & \\
    \cdots & \mathcal{C}_1(B_2) \ar[r]^-{0} \ar[dr]|-{\partial_{[1]}} \ar[ddr]|(.3){\partial_{[2]}}|\hole \ar@{}[d]|{\oplus} & \mathcal{C}_0(B_2) \ar[r]^-{\partial_{[0]}} \ar[dr]|-{\partial_{[1]}} \ar[ddr]|(.3){\partial_{[2]}}|\hole \ar@{}[d]|{\oplus} & 0 \ar@{}[d]|{\oplus} & 0\ar@{}[d]|{\oplus} & \\
    \cdots & \mathcal{C}_2(B_1) \ar[r]^(.6){\cong} \ar[dr]|-{\partial_{[1]}} \ar@{}[d]|{\oplus}& \mathcal{C}_1(B_1) \ar[r]^(,6){0} \ar[dr]|-{\partial_{[1]}} \ar@{}[d]|{\oplus} & \mathcal{C}_0(B_1) \ar[r]^-{\partial_{[0]}} \ar[dr]|-{\partial_{[1]}} \ar@{}[d]|{\oplus} & 0 \ar@{}[d]|{\oplus} & \\ 
    \cdots & \mathcal{C}_3(B_0) \ar[r]^-{0} \ar@{}[d]|{||} & \mathcal{C}_2(B_0) \ar[r]^-{\cong} \ar@{}[d]|{||} & \mathcal{C}_1(B_0) \ar[r]^-{0} \ar@{}[d]|{||} & \mathcal{C}_0(B_0) \ar[r]^-{\partial_{[0]}} \ar@{}[d]|{||} & 0 \\
    \cdots & \CB_3(f,\mathcal{R}) \ar[r]^-{\bm{\partial}_3} & \CB_2(f,\mathcal{R}) \ar[r]^-{\bm{\partial}_2} & \CB_1(f,\mathcal{R}) \ar[r]^-{\bm{\partial}_1} & \CB_0(f,\mathcal{R}) \ar[r]^-{\bm{\partial}_0} & 0
    }
\]
Therefore, Proposition \ref{proposition:boundary operator} yields that
\[
    0%
    = \partial_{[0]}\circ\partial_{[2]} + \partial_{[1]}\circ\partial_{[1]} + \partial_{[2]}\circ\partial_{[0]}
    = 0\circ\partial_{[2]} + \partial_{[1]}\circ\partial_{[1]} + \partial_{[2]}\circ 0
    = \partial_{[1]}\circ\partial_{[1]},
\]
which completes the proof.
\end{proof}

On the other hand,
we note that the injection $\iota\colon\mathcal{C}_0(B_k)\to\CB_k(f,\mathcal{R})$ given by
\[
    \iota(c_0) = (c_0,0,\ldots,0)%
    \in \mathcal{C}_0(B_k) \oplus \mathcal{C}_1(B_{k-1}) \oplus \cdots \oplus \mathcal{C}_k(B_0) \cong \CB_k(f,\mathcal{R}),
\]
for $c_0\in\mathcal{C}_0(B_k)$,
does \textit{not} induce a homomorphism between the homology groups.
Indeed, for some $c_0\in\Ker{\partial_{[1]}}$,
its image $\iota(c_0)$ does not lie in $\Ker{\bm{\partial}_k}$.

The following is the key lemma which claims
that there exists a canonical embedding of the Morse--Smale--Witten chain complex into
the Morse--Bott--Smale chain complex.

\begin{lemma}\label{lemma:canonical embedding}
Let $k=0$,~$1$, \dots,~$\dim{M}$.
For any singular simplicial chain $c_0\in\mathcal{C}_0(B_k)$
there exists a unique
$(c_1,\ldots,c_k)\in\mathcal{C}_1(B_{k-1})\oplus\cdots\oplus\mathcal{C}_k(B_0)$
that satisfies the following conditions:

\begin{enumerate}
    \item If $i$ is odd, then $c_i=0$.
    \item Assume that $(d_0,d_1,\ldots,d_{k-1})=\bm{\partial}_k(c_0,c_1,\ldots,c_k)$ as elements of
    \[
        \mathcal{C}_0(B_{k-1}) \oplus \mathcal{C}_1(B_{k-2}) \oplus \cdots \oplus \mathcal{C}_{k-1}(B_0).
    \]
    If $i$ is odd, then $d_i=0$.
    If $i$ is even, then
    \[
        d_i = -\partial_{[0]}^{-1}%
        \left(%
        \partial_{[i]}(d_0) + \partial_{[i-2]}(d_2) + \partial_{[i-4]}(d_4) + \cdots + \partial_{[2]}(d_{i-2})%
        \right).
    \]
\end{enumerate}
\end{lemma}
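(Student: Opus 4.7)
The plan is to construct $(c_1,\ldots,c_k)$ inductively on its even-index components, using the odd part of condition (ii) to set up a recursion that uniquely determines each $c_{m+1}$, and then to derive the even part of condition (ii) as a consequence of $\bm{\partial}^2=0$. The key observation is that, since $f$ is Morse, every $B_j$ is a finite discrete set; exactly as depicted in the diagram preceding the statement, this implies that $\partial_{[0]}\colon\mathcal{C}_\ell(B_j)\to\mathcal{C}_{\ell-1}(B_j)$ is an isomorphism whenever $\ell$ is a positive even integer and is zero otherwise. This invertibility in every even positive degree powers the whole argument.

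Setting $c_\ell=0$ for all odd $\ell$ by fiat, we unfold $\bm{\partial}_k$ from \eqref{eq:boundary_op} to write
\[
    d_m = \sum_{j=0}^{m+1}\partial_{[j]}(c_{m+1-j}) \in \mathcal{C}_m(B_{k-1-m}).
\]
Because only even-index $c$'s survive, for odd $m$ the equation $d_m=0$ reduces to
\[
    \partial_{[0]}(c_{m+1}) = -\bigl(\partial_{[2]}(c_{m-1}) + \partial_{[4]}(c_{m-3}) + \cdots + \partial_{[m+1]}(c_0)\bigr).
\]
Since $m+1$ is a positive even integer, $\partial_{[0]}$ on the left is invertible, so $c_{m+1}$ is uniquely determined by $c_0,c_2,\ldots,c_{m-1}$. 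Running the recursion over $m=1,3,5,\ldots$ up to the largest odd value $\leq k-1$ constructs all even-index components uniquely; combined with the prescribed vanishing of odd-index components, this produces the unique $(c_1,\ldots,c_k)$ satisfying (i) together with the vanishing half of (ii).

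To establish the formula for $d_i$ with $i$ even, we apply $\bm{\partial}_{k-1}$ to $(d_0,\ldots,d_{k-1})$ and invoke $\bm{\partial}_{k-1}\circ\bm{\partial}_k=0$ from Proposition \ref{proposition:boundary operator}, which yields
\[
    \sum_{j=0}^{m+1}\partial_{[j]}(d_{m+1-j}) = 0 \quad\text{for every } m \leq k-2.
\]
Specializing to $m=i-1$ with $i\geq 2$ even, the terms in which $d_\ell$ has odd index $\ell$ vanish by the construction just completed, leaving
\[
    \partial_{[0]}(d_i) + \partial_{[2]}(d_{i-2}) + \partial_{[4]}(d_{i-4}) + \cdots + \partial_{[i]}(d_0) = 0.
\]
Inverting $\partial_{[0]}$ on the left (again valid because $i$ is a positive even integer) extracts exactly the formula in condition (ii); the case $i=0$ is vacuous.

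No genuine obstacle is expected. The only matters requiring care are the bookkeeping of the parity splitting and ensuring that $\partial_{[0]}^{-1}$ is applied only in degrees where the Morse hypothesis on $f$ guarantees invertibility. Given the clean ladder structure of the chain complex displayed in the diagram, both the recursive construction and the verification proceed essentially by forced choice at each step, so the argument amounts to carrying out a controlled inductive computation rather than confronting a substantive difficulty.
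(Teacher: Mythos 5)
Your proposal is correct and follows essentially the same route as the paper: the odd components of $c$ are set to zero, the even components are forced uniquely by inverting $\partial_{[0]}$ in positive even degree (equivalently, by requiring $d_m=0$ for odd $m$), and the formula for even $d_i$ is extracted from $\bm{\partial}^2=0$, which is the content of Proposition \ref{proposition:boundary operator}. The only organizational difference is that you package existence and uniqueness into a single forced recursion, whereas the paper defines the $c_i$ by explicit formula first and proves uniqueness afterward; the underlying identities are identical.
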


\begin{proof}
Let $c_0\in\mathcal{C}_0(B_k)$.
First we show the existence of $c_1$, \dots, $c_k$ satisfying the desired conditions.
When $i$ is either negative or odd, we set $c_i=0$.
When $i$ is positive and even, we define
\[
    c_i = -\partial_{[0]}^{-1}%
    \left(%
    \partial_{[i]}(c_0) + \partial_{[i-2]}(c_2) + \partial_{[i-4]}(c_4) + \cdots + \partial_{[2]}(c_{i-2})%
    \right)
\]
inductively.
Here we note that $\partial_{[0]}\colon\mathcal{C}_i(B_{k-i})\to\mathcal{C}_{i-1}(B_{k-i})$ is an isomorphism
whenever $i$ is positive and even.
If $i$ is odd, then we have
\begin{align*}
    d_i%
    &= \partial_{[i+1]}(c_0) + \partial_{[i]}(c_1) + \partial_{[i-1]}(c_2) + \cdots + \partial_{[1]}(c_i) + \partial_{[0]}(c_{i+1}) \\
    &= \partial_{[i+1]}(c_0) + 0 + \partial_{[i-1]}(c_2) + \cdots + 0 + \partial_{[0]}(c_{i+1}) \\
    &= 0.
\end{align*}
If $i$ is positive and even, then Proposition \ref{proposition:boundary operator} yields that
\begin{align*}
    \partial_{[0]}(d_i)%
    &= \partial_{[0]} \left(\sum_{j=0}^{i+1} \partial_{[j]}(c_{i+1-j})\right)%
    = \sum_{j=1}^{i+1} \bigl(\partial_{[0]} \circ \partial_{[j]}\bigr)(c_{i+1-j}) \\
    &= \sum_{j=1}^{i+1} \left(%
    -\sum_{q=1}^j \bigl(\partial_{[q]} \circ \partial_{[j-q]}\bigr)(c_{i+1-j})%
    \right) \\
    &= \sum_{q=1}^{i+1} \left(%
    -\sum_{j=q}^{i+1} \bigl(\partial_{[q]} \circ \partial_{[j-q]}\bigr)(c_{i+1-j})%
    \right) \\
    &= -\sum_{q=1}^{i+1} \partial_{[q]} \left(%
    \sum_{j=q}^{i+1} \partial_{[j-q]}(c_{i+1-j})%
    \right)%
    = -\sum_{q=1}^i \partial_{[q]}%
    (d_{i-q}) \\
    &= -\left(\partial_{[i]}(d_0) + \partial_{[i-1]}(d_1) + \partial_{[i-2]}(d_2) + \cdots + \partial_{[2]}(d_{i-2}) + \partial_{[1]}(d_{i-1})\right) \\
    &= -\left(\partial_{[i]}(d_0) + 0 + \partial_{[i-2]}(d_2) + \cdots + \partial_{[2]}(d_{i-2}) + 0\right) \\
    &= -\left(\partial_{[i]}(d_0) + \partial_{[i-2]}(d_2) + \cdots + \partial_{[2]}(d_{i-2})\right),
\end{align*}
where the fourth equation is obtained by changing the order of summation as illustrated in Figure \ref{figure:order_of_sum}.
Therefore, we obtain
\[
    d_i = -\partial_{[0]}^{-1}%
    \left(%
    \partial_{[i]}(d_0) + \partial_{[i-2]}(d_2) + \partial_{[i-4]}(d_4) + \cdots + \partial_{[2]}(d_{i-2})%
    \right).
\]
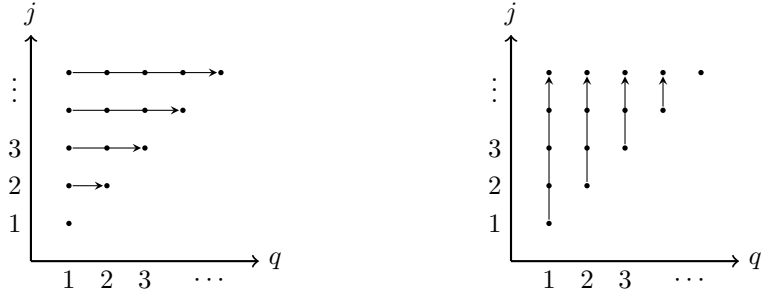
\begin{figure}[htbp]
\centering
\begin{minipage}[b]{.49\columnwidth}
    \centering
    \begin{tikzpicture}[scale=.5]
    \draw[thick, ->] (0,0) -- (6,0) node [right] {$q$};
    \draw[thick, ->] (0,0) -- (0,6) node [above] {$j$};
    \foreach \q in {1,2,3}
       \node at (\q,0) [below] {$\q$};
    \foreach \j in {1,2,3}
       \node at (0,\j) [left] {$\j$};
    \foreach \q in {1,2,3,4,5}
        \fill (\q,5) circle (2pt) ;
    \foreach \q in {1,2,3,4}
        \fill (\q,4) circle (2pt) ;
    \foreach \q in {1,2,3}
        \fill (\q,3) circle (2pt) ;
    \foreach \q in {1,2}
        \fill (\q,2) circle (2pt) ;
    \foreach \q in {1}
        \fill (\q,1) circle (2pt) ;
    \node at (4.75,0) [below=1.25pt] {$\cdots$};
    \node at (0,4.75) [left=1.25pt] {$\vdots$};
    \draw[->,>=stealth] (1.1,2) -- (1.9,2);
    \draw[->,>=stealth] (1.1,3) -- (2.9,3);
    \draw[->,>=stealth] (1.1,4) -- (3.9,4);
    \draw[->,>=stealth] (1.1,5) -- (4.9,5);
    \end{tikzpicture}
\end{minipage}
\begin{minipage}[b]{.49\columnwidth}
    \centering
    \begin{tikzpicture}[scale=.5]
    \draw[thick, ->] (0,0) -- (6,0) node [right] {$q$};
    \draw[thick, ->] (0,0) -- (0,6) node [above] {$j$};
    \foreach \q in {1,2,3}
       \node at (\q,0) [below] {$\q$};
    \foreach \j in {1,2,3}
       \node at (0,\j) [left] {$\j$};
    \foreach \q in {1,2,3,4,5}
        \fill (\q,5) circle (2pt) ;
    \foreach \q in {1,2,3,4}
        \fill (\q,4) circle (2pt) ;
    \foreach \q in {1,2,3}
        \fill (\q,3) circle (2pt) ;
    \foreach \q in {1,2}
        \fill (\q,2) circle (2pt) ;
    \foreach \q in {1}
        \fill (\q,1) circle (2pt) ;
    \node at (4.75,0) [below=1.25pt] {$\cdots$};
    \node at (0,4.75) [left=1.25pt] {$\vdots$};
    \draw[->,>=stealth] (1,1.1) -- (1,4.9);
    \draw[->,>=stealth] (2,2.1) -- (2,4.9);
    \draw[->,>=stealth] (3,3.1) -- (3,4.9);
    \draw[->,>=stealth] (4,4.1) -- (4,4.9);
    \end{tikzpicture}
\end{minipage}
\caption{Rearranging the order of summation.}
\label{figure:order_of_sum}
\end{figure}
Moreover, we have $d_0=\partial_{[0]}(c_1)+\partial_{[1]}(c_0)=\partial_{[1]}(c_0)$.

For the uniqueness, we choose another $(c'_1,\ldots,c'_k)$ satisfying the desired conditions.
Let $(d'_0,d'_1,\ldots,d'_{k-1})=\bm{\partial}_k(c_0,c'_1,\ldots,c'_k)$.
If $i$ is odd, then clearly $c'_i=0=c_i$ holds.
If $i$ is positive and even, then
\begin{align*}
    0= d'_{i-1}%
    &= \partial_{[0]}(c'_i) + \partial_{[1]}(c'_{i-1}) +\partial_{[2]}(c'_{i-2}) + \cdots + \partial_{[i-1]}(c'_1) + \partial_{[i]}(c_0) \\
    &= \partial_{[0]}(c'_i) + 0 + \partial_{[2]}(c'_{i-2}) + \cdots + 0 + \partial_{[i]}(c_0),
\end{align*}
which implies that
\[
    c'_i = -\partial_{[0]}^{-1}%
    \left(\partial_{[2]}(c'_{i-2}) + \partial_{[4]}(c'_{i-4}) + \cdots + \partial_{[i]}(c_0)\right).
\]
Therefore, $c'_i=c_i$ holds for all $i$ inductively.
This completes the proof.
\end{proof}

By Lemma \ref{lemma:canonical embedding},
for each $k=0$,~1, \dots,~$\dim{M}$
one can define a map
$\Phi_{\#}\colon\mathcal{C}_0(B_k)\to\CB_k(f,\mathcal{R})$
by the formula
\[
    \Phi_{\#}(c_0) = (c_0,c_1,\ldots,c_k)%
    \in \mathcal{C}_0(B_k) \oplus \mathcal{C}_1(B_{k-1}) \oplus \cdots \oplus \mathcal{C}_k(B_0) \cong \CB_k(f,\mathcal{R}),
\]
for $c_0\in\mathcal{C}_0(B_k)$.

\begin{theorem}\label{theorem:Morse_MB_isom}
Let $f\colon M\to\RR$ be a Morse--Smale function on $(M,g)$.
For any $k=0$,~$1$, \dots,~$\dim{M}$
the map $\Phi_{\#}\colon\mathcal{C}_0(B_k)\to\CB_k(f,\mathcal{R})$ is a chain map.
Moreover, $\Phi_{\#}$ induces an isomorphism
$\Phi_* \colon \HM_k(f) \xrightarrow{\cong} \HB_k(f,\mathcal{R})$.
\end{theorem}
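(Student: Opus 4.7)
The plan is to establish three facts in order: (1) $\Phi_{\#}$ is a chain map, (2) every cycle in $\CB_k(f,\mathcal{R})$ is $\bm{\partial}$-homologous to one in the image of $\Phi_{\#}$, and (3) if $\Phi_{\#}(c_0)$ is a $\bm{\partial}$-boundary, then $c_0$ is already a $\partial_{[1]}$-boundary. The key structural fact, used throughout, is that since $f$ is Morse, each $B_i$ is a finite discrete space, so by Lemma \ref{lemma:diagonal_is_MSW} the differential $\partial_{[0]}\colon \mathcal{C}_p(B_i)\to\mathcal{C}_{p-1}(B_i)$ is $\pm\mathrm{id}$ when $p$ is even and positive, and vanishes when $p$ is odd (in particular for $p=1$).

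For (1), write $\Phi_{\#}(c_0)=(c_0,c_1,\ldots,c_k)$ and let $(d_0,\ldots,d_{k-1})=\bm{\partial}_k(\Phi_{\#}(c_0))$. A direct inspection of the zeroth entry gives $d_0=\partial_{[1]}(c_0)+\partial_{[0]}(c_1)=\partial_{[1]}(c_0)$, since $c_1=0$ by the parity condition. The computations already carried out in the proof of Lemma \ref{lemma:canonical embedding} amount to the statement that $(d_1,\ldots,d_{k-1})$ satisfies conditions~(i) and~(ii) of that lemma with respect to $d_0$. By the uniqueness part of Lemma \ref{lemma:canonical embedding}, this forces $(d_0,d_1,\ldots,d_{k-1})=\Phi_{\#}(d_0)$, i.e., $\bm{\partial}_k\circ\Phi_{\#}=\Phi_{\#}\circ\partial_{[1]}$.

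For (2), let $z=(c_0,c_1,\ldots,c_k)$ be any cycle. The difference $z-\Phi_{\#}(c_0)$ is a cycle whose zeroth coordinate vanishes, so it suffices to prove that any such cycle is a boundary. I would argue by induction on $m\geq 1$: every cycle with $c_0=\cdots=c_{m-1}=0$ is $\bm{\partial}$-homologous to a cycle with $c_0=\cdots=c_m=0$. Indeed, the cycle condition reduces to $\partial_{[0]}(c_m)=0$. If $m$ is even and positive, injectivity of $\partial_{[0]}$ on $\mathcal{C}_m(B_{k-m})$ forces $c_m=0$ directly. If $m$ is odd, acyclicity of the row provides $e\in\mathcal{C}_{m+1}(B_{k-m})$ with $\partial_{[0]}(e)=c_m$, and subtracting $\bm{\partial}_{k+1}$ of the element supported only at position $m+1$ with value $e$ replaces $c_m$ by zero while altering only coordinates at positions strictly greater than $m$. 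After at most $k$ iterations the cycle is reduced to zero.

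Step (3) is immediate once (1) is in hand: reading off the zeroth coordinate of $\bm{\partial}_{k+1}(e_0,e_1,\ldots,e_{k+1})=\Phi_{\#}(c_0)$ yields $c_0=\partial_{[1]}(e_0)+\partial_{[0]}(e_1)$, and $\partial_{[0]}$ vanishes on $\mathcal{C}_1(B_k)$ by the parity analysis, so $c_0=\partial_{[1]}(e_0)$ is a Morse--Smale--Witten boundary. The main obstacle is the bookkeeping in the surjectivity induction in step (2), since each killing move reshuffles later coordinates through the off-diagonal operators $\partial_{[j]}$ with $j\geq 1$; what closes the argument is that modifications are always pushed to strictly later positions, so the procedure terminates at position $k$.
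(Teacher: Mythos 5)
Your argument is correct and follows the same overall strategy as the paper: verify the chain-map identity, then establish injectivity and surjectivity separately, all via the parity dichotomy for $\partial_{[0]}$ (isomorphism in positive even degree, zero in odd degree). The chain-map and injectivity steps are essentially the paper's. For the chain map you package the verification through the uniqueness clause of Lemma~\ref{lemma:canonical embedding} rather than comparing the defining recursions for $c_i$ and $d_i$ term by term; this is a cosmetic repackaging, but to invoke uniqueness cleanly you should remark that $(d_1,\ldots,d_{k-1})$ satisfies condition~(ii) of the lemma with respect to $d_0$ because $\bm{\partial}_{k-1}(d_0,\ldots,d_{k-1})=\bm{\partial}_{k-1}\bm{\partial}_k\Phi_{\#}(c_0)=0$ and the zero tuple satisfies the recursion trivially; the lemma's proof alone only gives you condition~(i). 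Your surjectivity argument is the one place where you diverge substantively: you subtract $\Phi_{\#}(c_0)$ and then iteratively kill $c_1,c_2,\ldots,c_k$ one coordinate at a time, each modification affecting only strictly higher slots, so the procedure terminates. The paper instead subtracts a single boundary $\bm{\partial}_{k+1}(e_0,\ldots,e_{k+1})$ (with $e_i$ nonzero only for positive even $i$) designed to annihilate all odd coordinates at once, and then uses the cycle condition to recognize the resulting even coordinates as those of $\Phi_{\#}(c_0)$. Both routes work; yours is more step-by-step but has the minor advantage of never needing to describe the even entries of $\bm{\partial}_{k+1}(e_0,\ldots,e_{k+1})$, which are in general nonzero (e.g.\ $\partial_{[1]}(e_2)$ appears at slot~$2$), a point the paper's displayed formula glosses over.
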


\begin{proof}
We shall show that $\Phi_{\#}\circ\partial_1=\bm{\partial}_k\circ\Phi_{\#}$.
Let $c_0\in\mathcal{C}_0(B_{k+1})$.
Then we can write
\[
    \Phi_{\#}\bigl(\partial_{[1]}(c_0)\bigr)=\bigl(\partial_{[1]}(c_0),c_1,\ldots,c_k\bigr),
\]
and
\[
    \bm{\partial}_k\bigl(\Phi_{\#}(c_0)\bigr)=\bm{\partial}_k(c_0,*,\ldots,*)=\bigl(\partial_{[1]}(c_0),d_1,\ldots,d_k\bigr).
\]
If $i$ is odd, then $c_i=0=d_i$.
If $i$ is positive and even,
then the definition of $\Phi_{\#}$ and the property of $d_i$ imply that
\[
    c_i = -\partial_{[0]}^{-1}%
    \left(%
    \partial_{[i]}\bigl(\partial_{[1]}(c_0)\bigr) + \partial_{[i-2]}(c_2) + \partial_{[i-4]}(c_4) + \cdots + \partial_{[2]}(c_{i-2})%
    \right),
\]
and
\[
    d_i = -\partial_{[0]}^{-1}%
    \left(%
    \partial_{[i]}\bigl(\partial_{[1]}(c_0)\bigr) + \partial_{[i-2]}(d_2) + \partial_{[i-4]}(d_4) + \cdots + \partial_{[2]}(d_{i-2})%
    \right).
\]
Therefore, $c_i=d_i$ holds for all $i$ inductively.
It shows that $\Phi_{\#}\circ\partial_1=\bm{\partial}_k\circ\Phi_{\#}$.
Thus, the map $\Phi_{\#}$ induces a homomorphism $\Phi_*\colon\HM_k(f)\to\HB_k(f,\mathcal{R})$.

It is obvious that $\Phi_*$ is injective.
Indeed, if a homology class $[c_0]\in\HM_k(f)$ satisfies $\Phi_{\#}(c_0)=\bm{\partial}_k(e_0,*,\ldots,*)$ for some $e_0$,
then $c_0=\partial_{[1]}(e_0)\in\Image{\partial_{k+1}^{\mathrm{Morse}}}$.

Now we show the surjectivity.
Choose $[(c_0,c_1,\ldots,c_k)]\in\HB_k(f,\mathcal{R})$ arbitrarily.
Without loss of generality, we may assume that $c_i=0$ when $i$ is odd.
Indeed, one can find $(e_0,e_1,\ldots,e_{k+1})\in\CB_{k+1}(f,\mathcal{R})$
such that $\bm{\partial}_{k+1}(e_0,e_1,\ldots,e_{k+1})=(0,c_1,0,c_3,\ldots,c_k)$ as follows.
When $i$ is either non-positive or odd, we set $e_i=0$.
When $i$ is positive and even, we define $e_i$ by the formula
\[
    c_{i-1} = \partial_{[i]}(e_0) + \partial_{[i-2]}(e_2) + \cdots + \partial_{[2]}(e_{i-2}) + \partial_{[0]}(e_i)
\]
inductively.
Since $(c_0,c_1,\ldots,c_k)\in\Ker{\bm{\partial}_k}$,
we have $c_0\in\Ker{\partial_{[1]}}$ and
\[
    c_i = -\partial_{[0]}^{-1}%
    \left(%
    \partial_{[i]}\bigl(\partial_{[1]}(c_0)\bigr) + \partial_{[i-2]}(c_2) + \partial_{[i-4]}(c_4) + \cdots + \partial_{[2]}(c_{i-2})%
    \right)
\]
whenever $i$ is positive and even.
Now the definition of $\Phi_{\#}$ yields that
$\Phi_*([c_0])=[\Phi_{\#}(c_0)]=[(c_0,c_1,\ldots,c_k)]$.
This completes the proof.
\end{proof}

\begin{proof}[Proof of Theorem \ref{theorem:Morse homology Theorem}]
Let $f\colon M\to\RR$ be a Morse--Smale function on $(M,g)$
and $\mathcal{R}$ a good representing chain system for $f$.
By Lemma \ref{lemma:diagonal_is_MSW},
the pair $(\CM_*(f),\partial_*^{\mathrm{Morse}})$ is a chain complex.
Now Theorem \ref{theorem:Morse_MB_isom} and Corollary \ref{corollary:MB_is_sing} conclude that
\[
    \HM_*(f) \cong \HB_*(f,\mathcal{R}) \cong H_*(M;\ZZ).\qedhere
\]
\end{proof}


\section{Proof of Theorem \ref{theorem:MB homology theorem}}\label{section:independence}

In this section, we prove Theorem \ref{theorem:MB homology theorem}.
Since the proof proceeds in the same manner as in \cite[Section 6]{BH10},
we focus here specifically on the definition and existence of good representing chain systems for chain maps and chain homotopies,
and present the necessary modifications.

Let $(M,g)$ be a closed oriented Riemannian manifold.


\subsection{Chain map}\label{section:chain map}

Let $f_1$, $f_2\colon M\to\RR$ be Morse--Bott--Smale functions on $(M,g)$.
For each $\heartsuit\in\{1,2\}$ and $i=0$,~1, \dots,~$\dim{M}$
let $B_i^{f_{\heartsuit}}$ denote the critical submanifold of $f_{\heartsuit}$ of Morse--Bott index $i$.
For simplicity, we assume that the components of $B_i^{f_{\heartsuit}}$ have the same dimension.
Let $b_i^{f_{\heartsuit}}=\dim{B_i^{f_{\heartsuit}}}$.

Let $F_{21}\colon M\times \RR\to\RR$ be a smooth function which is strictly decreasing in the second component and satisfies
\[
    F_{21}(\cdot,t) = \begin{cases}%
        f_1 & \text{for $t\ll -1$}, \\
        f_2 - \text{(constant)} & \text{for $t\gg 1$}.
    \end{cases}
\]
Let $\{\varphi_{\alpha}\}_{\alpha\in\RR}$ denote the flow of $-\grad{F_{21}}$ with respect to the product metric on $M\times \RR$.
Let $i$, $j=0$,~1, \dots,~$\dim{M}$.
We set
\[
     W^u_{F_{21}}(B_i^{f_1})%
     = \left\{\,(x,t)\in M\times\RR \relmiddle| \lim_{\alpha\to -\infty} \mathrm{pr}_1\bigl(\varphi_{\alpha}(x,t)\bigr)\in B_i^{f_1}\,\right\}
\]
and
\[
     W^s_{F_{21}}(B_j^{f_2})%
     = \left\{\,(x,t)\in M\times\RR \relmiddle| \lim_{\alpha\to\infty} \mathrm{pr}_1\bigl(\varphi_{\alpha}(x,t)\bigr)\in B_j^{f_2}\,\right\}.
\]
For a generic choice of $F_{21}$ the moduli space
\[
    \mathcal{M}_{F_{21}}(B_i^{f_1},B_j^{f_2}) = \bigl(W^u_{F_{21}}(B_i^{f_1}) \cap W^s_{F_{21}}(B_j^{f_2})\bigr) / \RR
\]
is either empty or a compact oriented smooth manifold with corners of dimension $b_i^{f_1}+i-j$.
As with Theorems \ref{theorem:Gluing} and \ref{theorem:Compactification},
we have gluing and compactification results for $F_{21}$ (cf.\ \cite[Lemmas 6.3 and 6.4]{BH10}).
Let $\overline{\mathcal{M}}_{F_{21}}(B_i^{f_1},B_j^{f_2})$ denote the compactification of $\mathcal{M}_{F_{21}}(B_i^{f_1},B_j^{f_2})$.
We define the degree of $\overline{\mathcal{M}}_{F_{21}}(B_i^{f_1},B_j^{f_2})$ to be $b_i^{f_1}+i-j$.
Moreover, taking into account both the orientation induced on the boundary and the orientation defined on the fibered product,
we define
\begin{align*}
    \partial\left(\overline{\mathcal{M}}_{F_{21}}(B_i^{f_1},B_j^{f_2})\right)%
    =(-1)^{i+b_i^{f_1}}%
    \Biggl(%
    & \sum_{n<i}%
    \overline{\mathcal{M}}(B_i^{f_1},B_n^{f_1}) \times_{B_n^{f_1}} \overline{\mathcal{M}}_{F_{21}}(B_n^{f_1},B_j^{f_2}) \\
    &-\sum_{n>j}%
    \overline{\mathcal{M}}_{F_{21}}(B_i^{f_1},B_n^{f_2}) \times_{B_n^{f_2}} \overline{\mathcal{M}}(B_n^{f_2},B_j^{f_2})%
    \Biggr) ,
\end{align*}
where the fibered products are taken over the beginning and endpoint maps.
Then the map $\partial$ satisfies $\deg{\partial}=-1$ and $\partial\circ\partial=0$.

Let $N>\dim{M}$.
For $p\geq 0$
let $C^{F_{21}}_p$ be the set consisting of $p$-faces of $\Delta^N$
and $p$-dimensional connected components of fibered products of the form
\begin{align*}
    \dotDelta^q%
    \times_{B_{i_1}^{f_1}} \overline{\mathcal{M}}(B_{i_1}^{f_1},B_{i_2}^{f_1})%
    \times_{B_{i_2}^{f_1}} \cdots%
    \times_{B_{i_{n-1}}^{f_1}} \overline{\mathcal{M}}(B_{i_{n-1}}^{f_1},B_{i_n}^{f_1})%
    \times_{B_{i_n}^{f_1}} \overline{\mathcal{M}}_{F_{21}}(B_{i_n}^{f_1},B_{j_1}^{f_2}) \\
    \times_{B_{j_1}^{f_2}} \overline{\mathcal{M}}(B_{j_1}^{f_2},B_{j_2}^{f_2})%
    \times_{B_{j_2}^{f_2}} \cdots%
    \times_{B_{j_{k-1}}^{f_2}} \overline{\mathcal{M}}(B_{j_{k-1}}^{f_2},B_{j_k}^{f_2}),
\end{align*}
where $\dotDelta^q$ is a $q$-face of $\Delta^N$ for some $q\leq p$,
$\dim{M} \geq i_1 > i_2 > \cdots > i_n \geq 0$,
and $\dim{M} \geq j_1 > j_2 > \cdots > j_k \geq 0$.
Let $S^{F_{21}}_p=\ZZ[C^{F_{21}}_p]$.
For $p<0$ or $C^{F_{21}}_p=\emptyset$ we set $S^{F_{21}}_p=0$.
Then the pair $(S^{F_{21}}_*,\partial_*)$ is a chain complex of abstract topological chains.

A representing chain system for $F_{21}$ is defined in the same way as in Definition \ref{definition:rep.sys.}.
As the construction of Morse--Bott--Smale chain complexes,
we specify \textit{good} representing chains for every element of $C^{F_{21}}_p$.
We fix good representing chain systems $\mathcal{R}^{f_1}$ and $\mathcal{R}^{f_2}$ for $f_1$ and $f_2$, respectively.

\begin{definition}\label{definition:good family of rep. chain for F}
A representing chain system $\mathcal{R}^{F_{21}}$ for $F_{21}$ is called \textit{good}
if it satisfies the following conditions.

\begin{enumerate}
    \item For any $P\in C_p^{F_{21}}$
    there exists a unique element $(P,s_P)\in\mathcal{R}^{F_{21}}$
    such that $s_P$ represents the positive relative fundamental class of $P$ in $H_p(P,\partial P)$.
    
    \item Let $Q\in C_q^{f_1}$ be a connected component of a fibered product of the form 
    \[
       \dotDelta%
       \times_{B_{i_1}^{f_1}} \overline{\mathcal{M}}(B_{i_1}^{f_1},B_{i_2}^{f_1})%
       \times_{B_{i_2}^{f_1}} \cdots%
       \times_{B_{i_{n-1}}^{f_1}} \overline{\mathcal{M}}(B_{i_{n-1}}^{f_1},B_{i_n}^{f_1}),
    \]
    where $\dotDelta$ is a face of $\Delta^N$.
    Let $s^{f_1}_Q = \sum_{\alpha} n_{\alpha} \sigma_{\alpha} \in \mathcal{R}^{f_1}$
    be the representing chain for $Q$.
    For each $\alpha$ and any critical submanifold $B^{f_2}$ of $f_2$
    consider the fibered product $\Delta^q_{\alpha} \times_{B_{i_n}^{f_1}} \overline{\mathcal{M}}_{F_{21}}(B_{i_n}^{f_1},B^{f_2})$
    of the composition map
    \[
        \mathrm{ev}_+ \circ \mathrm{pr} \circ \sigma_{\alpha} \colon%
        \Delta^q_{\alpha} \xrightarrow{\sigma_{\alpha}} Q \xrightarrow{\mathrm{pr}} \overline{\mathcal{M}}(B_{i_{n-1}}^{f_1},B_{i_n}^{f_1})%
        \xrightarrow{\mathrm{ev}_+} B_{i_n}^{f_1},
    \]
    and the beginning map $\mathrm{ev}_-\colon\overline{\mathcal{M}}_{F_{21}}(B_{i_n}^{f_1},B^{f_2})\to B_{i_n}^{f_1}$.
    If the representing chain for
    $\Delta^q_{\alpha} \times_{B_{i_n}^{f_1}} \overline{\mathcal{M}}_{F_{21}}(B_{i_n}^{f_1},B^{f_2}) \in C_*^{F_{21}}$
    is of the form
    \[
        s_{\Delta^q_{\alpha} \times_{B_{i_n}^{f_1}} \overline{\mathcal{M}}_{F_{21}}(B_{i_n}^{f_1},B^{f_2})}%
        = \sum_{\beta} m_{\beta} \tau_{\beta} \in \mathcal{R}^{F_{21}}, 
    \]
    then the representing chain for the fibered product
    $Q \times_{B_{i_n}^{f_1}} \overline{\mathcal{M}}_{F_{21}}(B_{i_n}^{f_1},B^{f_2})$
    coincides with
    \[
        s_{Q \times_{B_{i_n}^{f_1}} \overline{\mathcal{M}}_{F_{21}}(B_{i_n}^{f_1},B^{f_2})}%
        = \sum_{\alpha,\,\beta} n_{\alpha} m_{\beta} \bigl((\sigma_{\alpha} \times \mathrm{id}) \circ \tau_{\beta} \bigr).
        \]

    \item Let $P\in C_p^{F_{21}}$ be a connected component of a fibered product of the form 
    \begin{align*}
        \dotDelta%
        \times_{B_{i_1}^{f_1}} \overline{\mathcal{M}}(B_{i_1}^{f_1},B_{i_2}^{f_1})%
        \times_{B_{i_2}^{f_1}} \cdots%
        \times_{B_{i_{n-1}}^{f_1}} \overline{\mathcal{M}}(B_{i_{n-1}}^{f_1},B_{i_n}^{f_1})%
        \times_{B_{i_n}^{f_1}} \overline{\mathcal{M}}_{F_{21}}(B_{i_n}^{f_1},B_{j_1}^{f_2}) \\
        \times_{B_{j_1}^{f_2}} \overline{\mathcal{M}}(B_{j_1}^{f_2},B_{j_2}^{f_2})%
        \times_{B_{j_2}^{f_2}} \cdots%
        \times_{B_{j_{k-1}}^{f_2}} \overline{\mathcal{M}}(B_{j_{k-1}}^{f_2},B_{j_k}^{f_2}),
    \end{align*}
    where $\dotDelta$ is a face of $\Delta^N$.
    Let $s^{F_{21}}_P = \sum_{\alpha} n_{\alpha} \sigma_{\alpha}\in \mathcal{R}^{F_{21}}$
    be the representing chain for $P$.
    For each $\alpha$ and any critical submanifold $B^{f_2}$ of $f_2$
    consider the fibered product $\Delta^p_{\alpha} \times_{B_{j_k}^{f_2}} \overline{\mathcal{M}}(B_{j_k}^{f_2},B^{f_2})$
    of the composition map
    \[
        \mathrm{ev}_+ \circ \mathrm{pr} \circ \sigma_{\alpha} \colon%
        \Delta^p_{\alpha} \xrightarrow{\sigma_{\alpha}} P \xrightarrow{\mathrm{pr}} \overline{\mathcal{M}}(B_{j_{k-1}}^{f_2},B_{j_k}^{f_2})%
        \xrightarrow{\mathrm{ev}_+} B_{j_k}^{f_2},
    \]
    and the beginning map $\mathrm{ev}_-\colon\overline{\mathcal{M}}(B_{j_k}^{f_2},B^{f_2})\to B_{j_k}^{f_2}$.
    If the representing chain for
    $\Delta^p_{\alpha} \times_{B_{j_k}^{f_2}} \overline{\mathcal{M}}(B_{j_k}^{f_2},B^{f_2})\in C_*^{f_2}$
    is of the form
    \[
        s_{\Delta^p_{\alpha} \times_{B_{j_k}^{f_2}} \overline{\mathcal{M}}(B_{j_k}^{f_2},B^{f_2})}%
        = \sum_{\beta} m_{\beta} \tau_{\beta} \in \mathcal{R}^{f_2}, 
    \]
    then the representing chain for the fibered product $P \times_{B_{j_k}^{f_2}} \overline{\mathcal{M}}(B_{j_k}^{f_2},B^{f_2})$
    coincides with
    \[
        s_{P \times_{B_{j_k}^{f_2}} \overline{\mathcal{M}}(B_{j_k}^{f_2},B^{f_2})}%
        = \sum_{\alpha,\,\beta} n_{\alpha} m_{\beta} \bigl((\sigma_{\alpha} \times \mathrm{id}) \circ \tau_{\beta} \bigr).
    \]
\end{enumerate}
\end{definition}

\begin{lemma}\label{lemma:exsitence of a good rep. chains for F}
There exists a good representing chain system for $F_{21}$.
\end{lemma}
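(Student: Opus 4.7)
The plan is to mimic the inductive construction in the proof of Lemma \ref{lemma:existence of a good rep. chains}, assigning exactly one representing chain to every element $P\in C_p^{F_{21}}$ while simultaneously enforcing the compatibility clauses (2) and (3) of Definition \ref{definition:good family of rep. chain for F}. For each face $\dotDelta^p$ of $\Delta^N$ I would take the standard embedding $\iota\colon\Delta^p\to\dotDelta^p$ as the representing chain, exactly as in the single-function case. Every remaining element $P\in C^{F_{21}}_p$ is a connected component of a fibered product containing precisely one copy of an $F_{21}$-moduli factor $\overline{\mathcal{M}}_{F_{21}}(B^{f_1}_{i_n},B^{f_2}_{j_1})$, flanked by moduli factors associated to $f_1$ on the left and to $f_2$ on the right.

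I would proceed by a nested induction: first on the maximal relative index drop $d$ appearing in any of the flanking factors (mirroring the key inductive parameter in Lemma \ref{lemma:existence of a good rep. chains}), second on the pair $(n,k)$ counting how many flanking $f_1$- and $f_2$-factors occur, and finally on the dimension $p$. The base case, fibered products of the simplest shape $\dotDelta^q \times_{B^{f_1}} \overline{\mathcal{M}}_{F_{21}}(B^{f_1},B^{f_2})$ with the $F_{21}$-moduli already compact and boundaryless, is handled directly: triangulate zero-dimensional components first, then extend component by component, with signs dictated by Definition \ref{definition:orientation of fiber product}. For the inductive step, the boundary $\partial P$ decomposes into codimension-one strata coming from (a) the boundary of $\dotDelta$, (b) breaking one of the $f_1$-moduli factors, (c) breaking the central $F_{21}$-moduli factor, or (d) breaking one of the $f_2$-moduli factors. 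The representing chain on each such stratum is already prescribed either by the inductive hypothesis or by clauses (2) and (3) of Definition \ref{definition:good family of rep. chain for F} applied to the fixed systems $\mathcal{R}^{f_1}$ and $\mathcal{R}^{f_2}$. I would then extend this triangulation of $\partial P$ to the interior of $P$ as a representative of the positive relative fundamental class in $H_p(P,\partial P)$, exactly as in the proof of Lemma \ref{lemma:existence of a good rep. chains}.

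The main obstacle I expect is verifying that the triangulations prescribed on the different codimension-one strata of $\partial P$ actually fit together into a single well-defined singular simplicial chain on $\partial P$; equivalently, the restrictions induced on any codimension-two corner, where two of the breakings (a)--(d) occur simultaneously, must coincide. This compatibility should follow from the associativity of oriented fibered products (Lemma \ref{lemma:orientations are associative}) together with the goodness of $\mathcal{R}^{f_1}$ and $\mathcal{R}^{f_2}$, which ensures that the multiplicative formulas in clauses (2) and (3) of Definition \ref{definition:good family of rep. chain for F}, and the analogous clause (2) of Definition \ref{definition:good rep.sys.} applied to $\mathcal{R}^{f_1}$ and $\mathcal{R}^{f_2}$, propagate consistently from one end of the fibered product to the other. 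Once this agreement at corners is verified, the standard boundary-to-interior extension for relative fundamental classes (cf.\ \cite[Lemma VI.9.1]{Br93}) produces the desired representing chain and the induction closes.
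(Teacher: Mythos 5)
Your proposal follows the same general strategy as the paper's proof: pick the standard embedding for faces of $\Delta^N$, build representing chains for the ``atomic'' pieces $\dotDelta^q\times_{B^{f_1}}\overline{\mathcal{M}}_{F_{21}}(B^{f_1},B^{f_2})$ inductively, and then let the multiplicative formulas in clauses (2) and (3) force the representing chains for fibered products with flanking moduli factors. You also correctly identify the subtlety that has to be checked---agreement of the prescribed chains on codimension-two corners of $\partial P$, which rests on associativity of oriented fibered products (Lemma \ref{lemma:orientations are associative}) and the goodness of $\mathcal{R}^{f_1}$ and $\mathcal{R}^{f_2}$. These are the right ingredients, and the paper's own proof relies on the same building blocks (without spelling the corner check out explicitly).

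However, the inductive ordering you propose does not terminate as stated. You order first by the maximal relative index drop $d$ \emph{among the flanking $f_1$- and $f_2$-factors}, then by the pair $(n,k)$, then by the dimension $p$. The problem is case (c) of your boundary decomposition: breaking the central factor $\overline{\mathcal{M}}_{F_{21}}(B_i^{f_1},B_j^{f_2})$ produces a new flanking factor $\overline{\mathcal{M}}(B_i^{f_1},B_m^{f_1})$ or $\overline{\mathcal{M}}(B_m^{f_2},B_j^{f_2})$, which can strictly \emph{increase} your $d$ (for instance, if $P$ originally had no flanking factors, $d$ jumps from $0$ to $i-m>0$), and always increases $(n,k)$. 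So the boundary strata coming from case (c) are not smaller in your ordering, and the induction as written does not close. The parameter that actually decreases under all four types of breaking is the difference $i-j$ of the indices attached to the central $F_{21}$-factor, not the flanking drop: the analogue of $d$ from Lemma \ref{lemma:existence of a good rep. chains} here must track the central factor, not the flanking ones. With the outer induction replaced by this quantity (choose freely only for the atoms $\dotDelta^q\times_{B^{f_1}}\overline{\mathcal{M}}_{F_{21}}(B^{f_1},B^{f_2})$, ordering them by $i-j$ and then by $q$, and then letting clauses (2)--(3) force everything with flanking factors), your argument matches the paper's and goes through. Relatedly, $(n,k)$ should not be treated as a parameter for free choices at all---as in the paper, once an atom has been handled, every $P$ with $n+k\ge 1$ flanking factors is determined by the multiplicative formulas rather than constructed independently.
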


\begin{proof}
As in Lemma \ref{lemma:existence of a good rep. chains},
we construct a good representing chain systems for $F_{21}$
by choosing exactly one representing chain for each $P\in C^{F_{21}}_p$.
Here we only give the necessary changes.

For each connected component of a fibered product
$\dotDelta^0 \times_{B_i^{f_1}} \mathcal{M}_{F_{21}}(B_i^{f_1},B^{f_2})$,
we choose exactly one representing chain and then extend it inductively
to obtain a representing chain for a connected component of a fibered product
$\dotDelta^q \times_{B_i^{f_1}} \mathcal{M}_{F_{21}}(B_i^{f_1},B^{f_2})$.

Let $Q\in C^{f_1}_q$ be a connected component of a fibered product
\[
    \dotDelta^0%
    \times_{B_{i_1}^{f_1}} \overline{\mathcal{M}}(B_{i_1}^{f_1},B_{i_2}^{f_1})%
    \times_{B_{i_2}^{f_1}} \cdots%
    \times_{B_{i_{n-1}}^{f_1}} \overline{\mathcal{M}}(B_{i_{n-1}}^{f_1},B_{i_n}^{f_1}),
\]
and let $s_Q = \sum_{\alpha} n_{\alpha} \sigma_{\alpha} \in \mathcal{R}^{f_1}$.
Assume that $P \in C^{F_{21}}_p$ is a connected component of a fibered product
$Q \times_{B_{i_n}^{f_1}} \mathcal{M}_{F_{21}}(B_{i_n}^{f_1},B^{f_2})$.
Consider the fibered product $\Delta^q_{\alpha} \times_{B_{i_n}^{f_1}} \mathcal{M}_{F_{21}}(B_{i_n}^{f_1},B^{f_2})$
of the maps $\mathrm{ev}_+ \circ \mathrm{pr} \circ \sigma_{\alpha}$ and $\mathrm{ev}_-$,
and let $\sum_{\beta} m_{\beta} \tau_{\beta} \in \mathcal{R}^{F_{21}}$ be its representing chain.
We then define the representing chain for $P$ by
\[
    s_P = \sum_{\alpha,\,\beta} n_{\alpha} m_{\beta} \bigl((\sigma_{\alpha} \times \mathrm{id}) \circ \tau_{\beta}\bigr) \in \mathcal{R}^{F_{21}}.
\]

Let $Q\in C^{F_{21}}_q$ be a connected component of a fibered product of the form
$\dotDelta^0 \times_{B_{i_n}^{f_1}} \overline{\mathcal{M}}_{F_{21}}(B_{i_n}^{f_1},B_{j_1}^{f_2})$
and let $s_Q = \sum_{\alpha} n_{\alpha} \sigma_{\alpha} \in \mathcal{R}^{F_{21}}$.
Assume that $P\in C^{F_{21}}_p$ is a connected component of a fibered product
$Q \times_{B_{j_1}^{f_2}} \mathcal{M}(B_{j_1}^{f_2},B_{j_2}^{f_2})$.
Consider the fibered product $\Delta^q_{\alpha} \times_{B_{j_1}^{f_2}} \overline{\mathcal{M}}(B_{j_1}^{f_2},B_{j_2}^{f_2}) \in C^{f_2}_*$
and let $\sum_{\beta} m_{\beta} \tau_{\beta} \in \mathcal{R}^{f_2}$ be its representing chain.
We then define the representing chain for $P$ by
\[
    s_P = \sum_{\alpha,\,\beta} n_{\alpha} m_{\beta} \bigl((\sigma_{\alpha} \times \mathrm{id}) \circ \tau_{\beta}\bigr) \in \mathcal{R}^{F_{21}}.
\]

For a general $P\in C^{F_{21}}_p$ we choose its representing chain
by extending the representing chains selected above in an inductive manner.
\end{proof}

We fix a good representing chain system $\mathcal{R}^{F_{21}}$ for $F_{21}$.
We define a homomorphism
\[
    (F_{21})_{\#}\colon (\CB_*(f_1,\mathcal{R}^{f_1}),\bm{\partial}_*) \to (\CB_*(f_2,\mathcal{R}^{f_2}),\bm{\partial}_*)
\]
as follows.
Let $\sigma\colon P\to B_i^{f_1}$ be a singular $C^{f_1}_p$-space in $B_i^{f_1}$.
For each $j=0$,~1, \dots,~$\dim{M}$
let $\sum_{\alpha} n^j_{\alpha} \sigma^j_{\alpha} \in \mathcal{R}^{F_{21}}$
be the representing chain for the fibered product $P\times_{B_i^{f_1}} \overline{\mathcal{M}}_{F_{21}}(B_i^{f_1},B_j^{f_2})$.
We then define
\begin{align*}
    F_{21}^j(\sigma) = \sum_{\alpha} n^j_{\alpha} (\mathrm{ev}_+\circ\mathrm{pr}\circ\sigma^j_{\alpha})%
    \quad \text{and}\quad
    (F_{21})_{\#}(\sigma) = \bigoplus_{j=0}^{\dim{M}} F_{21}^j(\sigma),
\end{align*}
which is well-defined due to the second condition of Definition \ref{definition:good family of rep. chain for F}.

A straightforward computation shows the following lemma (cf.\ \cite[Proposition 6.11]{BH10}).
However, the third condition of Definition \ref{definition:good family of rep. chain for F} is needed for the proof.

\begin{lemma}\label{lemma:chain map between MB multi-complexes}
The map $(F_{21})_{\#} \colon (\CB_*(f_1,\mathcal{R}^{f_1}),\bm{\partial}_*) \to (\CB_*(f_2,\mathcal{R}^{f_2}),\bm{\partial}_*)$
is a chain map and hence
it induces a homomorphism $(F_{21})_* \colon \HB_*(f_1,\mathcal{R}^{f_1}) \to \HB_*(f_2,\mathcal{R}^{f_2})$.
\end{lemma}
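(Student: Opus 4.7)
The plan is to verify that $\bm{\partial}_*\circ(F_{21})_\# = (F_{21})_\#\circ\bm{\partial}_*$ holds on every singular $C^{f_1}_p$-space $\sigma\colon P\to B_i^{f_1}$ and then extend by linearity. Writing $\bm{\partial}_* = \bigoplus_m \partial_{[m]}$ on both sides and $(F_{21})_\# = \bigoplus_j F_{21}^j$, the chain map condition decomposes into the family of identities
\[
    \sum_{m=0}^{\dim M} \partial_{[m]}\bigl(F_{21}^{j+m}(\sigma)\bigr)
    = \sum_{\ell=0}^{\dim M} F_{21}^{j}\bigl(\partial_{[\ell]}(\sigma)\bigr)
\]
in $\mathcal{C}_{p+i-j-1}(B_j^{f_2})$, one for each $j = 0, 1, \ldots, \dim M$.

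First I would analyze the $m = 0$ term. By the definition of $\partial_{[0]}$ and $F_{21}^j$, the chain $\partial_{[0]}\bigl(F_{21}^{j}(\sigma)\bigr)$ equals, up to a sign depending only on the Morse--Bott degree, the boundary of the representing chain for the fibered product $P\times_{B_i^{f_1}}\overline{\mathcal{M}}_{F_{21}}(B_i^{f_1},B_j^{f_2})$. Applying the Leibniz rule of Definition \ref{definition:boundary operator to fiber product} together with the explicit formula for $\partial\overline{\mathcal{M}}_{F_{21}}(B_i^{f_1},B_j^{f_2})$ stated earlier in this subsection yields three types of contributions: (i) terms coming from $\partial P$, (ii) terms of the form $P\times_{B_i^{f_1}}\overline{\mathcal{M}}(B_i^{f_1},B_n^{f_1})\times_{B_n^{f_1}}\overline{\mathcal{M}}_{F_{21}}(B_n^{f_1},B_j^{f_2})$ for $n<i$, and (iii) terms of the form $P\times_{B_i^{f_1}}\overline{\mathcal{M}}_{F_{21}}(B_i^{f_1},B_n^{f_2})\times_{B_n^{f_2}}\overline{\mathcal{M}}(B_n^{f_2},B_j^{f_2})$ for $n>j$. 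Contribution (i) matches $F_{21}^{j}\bigl(\partial_{[0]}(\sigma)\bigr)$ directly. Contribution (ii) matches $F_{21}^{j}\bigl(\partial_{[i-n]}(\sigma)\bigr)$ modulo $\mathcal{D}_*$, thanks to condition (2) of Definition \ref{definition:good family of rep. chain for F} applied to the $f_1$-side factor. Contribution (iii) matches $\partial_{[n-j]}\bigl(F_{21}^{n}(\sigma)\bigr)$ modulo $\mathcal{D}_*$, thanks to condition (3) of Definition \ref{definition:good family of rep. chain for F} applied to the $f_2$-side factor together with Lemma \ref{lemma:induced hom}.

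Second, for each $m \geq 1$ I would expand $\partial_{[m]}\bigl(F_{21}^{j+m}(\sigma)\bigr)$ as the representing chain of a further fibered product with $\overline{\mathcal{M}}(B_{j+m}^{f_2},B_j^{f_2})$, and use condition (3) of Definition \ref{definition:good family of rep. chain for F} once more to identify it with the chain appearing in contribution (iii) above. After this identification, all type-(iii) contributions on the left-hand side pair off against the corresponding $\partial_{[m]}\circ F_{21}^{j+m}$ terms with $m \geq 1$, while contributions (i) and (ii) on the left-hand side exactly produce $\sum_{\ell} F_{21}^{j}\bigl(\partial_{[\ell]}(\sigma)\bigr)$ on the right-hand side. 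The main obstacle will be the careful bookkeeping of signs: the boundary of $\overline{\mathcal{M}}_{F_{21}}(B_i^{f_1},B_j^{f_2})$ carries the prefactor $(-1)^{i+b_i^{f_1}}$, the gluing convention of Definition \ref{definition:orientation of fiber product} inserts a factor of $(-1)^{\dim X\cdot\dim P_2}$ at each fibered product, and the sign conventions $\partial_{[0]}=(-1)^{p+i}\partial_p$ together with the opposite signs attached to the two families in $\partial\overline{\mathcal{M}}_{F_{21}}$ must combine so that the $n<i$ and $n>j$ contributions emerge with exactly the signs needed for the identity. These are the same sign calculations as in \cite[Proposition 6.11]{BH10}; the structural novelty is merely that conditions (2) and (3) of Definition \ref{definition:good family of rep. chain for F} make the simplified degeneracy relation of Definition \ref{definition:deg con} compatible with the continuation map.
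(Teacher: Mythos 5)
Your proof is correct and matches the approach the paper intends: the paper itself gives only a one-sentence deferral to \cite[Proposition 6.11]{BH10} plus the remark that condition (iii) of Definition \ref{definition:good family of rep. chain for F} is needed, and your argument is precisely the ``straightforward computation'' being referenced, expanded into its componentwise form with the Leibniz rule, the boundary formula for $\overline{\mathcal{M}}_{F_{21}}$, and conditions (ii) and (iii) of Definition \ref{definition:good family of rep. chain for F} playing exactly the roles you assign them.
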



\subsection{Chain homotopy}\label{section:chain homotopy}

Let $f_1$, $f_2$, $f_3$, $f_4\colon M\to \RR$ be Morse--Bott--Smale functions on $(M,g)$.
For each $\heartsuit\in\{1,2,3,4\}$ and $i=0$,~1, \dots,~$\dim{M}$
let $B_i^{f_{\heartsuit}}$ denote the critical submanifold of $f_{\heartsuit}$ of Morse--Bott index $i$.
We assume that the components of $B_i^{f_{\heartsuit}}$ have the same dimension.
Let $b_i^{f_{\heartsuit}}=\dim{B_i^{f_{\heartsuit}}}$.

Let $H\colon M\times\RR\times\RR\to \RR$ be a smooth function which is strictly decreasing in the second and third component,
and satisfies
\[
    H(\cdot,s,t) = \begin{cases}%
        f_1 & \text{for $s,t\ll -1$}, \\
        f_2 - \text{(constant)} & \text{for $s\gg 1$ and $t\ll -1$}, \\
        f_3 - \text{(constant)} & \text{for $s\ll -1$ and $t\gg 1$}, \\
        f_4 - \text{(constant)} & \text{for $s,t\gg 1$}.
    \end{cases}
\]
We note that the function $H$ defines functions $F_{21},F_{42},F_{31},$ and $F_{43}$ as in Section \ref{section:chain map}.
Let $\{\varphi_{\alpha}\}_{\alpha\in\RR}$ denote the flow of $-\grad{H}$ with respect to the product metric on $M\times\RR\times\RR$.
Let $i$, $j=0$,~1, \dots,~$\dim{M}$.
One can define the moduli space $\mathcal{M}_H(B_i^{f_1},B_j^{f_4})$
which has a compactification $\overline{\mathcal{M}}_H(B_i^{f_1},B_j^{f_4})$.
This compactified moduli space $\overline{\mathcal{M}}_{H}(B _{i} ^{f_{1}},B _{j} ^{f_{4}})$
is either empty or a compact oriented smooth manifold with corners of dimension $b_i^{f_1}+i-j+1$.
Moreover, we define
\begin{align*}
    \partial\bigl(\overline{\mathcal{M}}_H(B_i^{f_1},B_j^{f_4})\bigr)%
    = (-1)^{i+b_i^{f_1}}%
    \Biggl(%
    & \sum_{n=0}^{\dim{M}}%
    \overline{\mathcal{M}}_{F_{21}}(B_i^{f_1},B_n^{f_2}) \times_{B_n^{f_2}} \overline{\mathcal{M}}_{F_{42}}(B_n^{f_2},B_j^{f_4}) \\
    &-\sum_{n=0} ^{\dim{M}}%
    \overline{\mathcal{M}}_{F_{31}}(B_i^{f_1},B_n^{f_3}) \times_{B_n^{f_3}} \overline{\mathcal{M}}_{F_{43}}(B_n^{f_3},B_j^{f_4}) \\
    &+\sum_{n<i}%
    \overline{\mathcal{M}}(B_i^{f_1},B_n^{f_1}) \times_{B_n^{f_1}} \overline{\mathcal{M}}_H(B_n^{f_1},B_j^{f_4}) \\
    &+\sum_{n>j}%
    \overline{\mathcal{M}}_H(B_i^{f_1},B_n^{f_4}) \times_{B_n^{f_4}} \overline{\mathcal{M}}(B_n^{f_4},B_j^{f_4})%
    \Biggr).
\end{align*}

Let $N>\dim{M}$.
For $p\geq 0$
let $C^H_p$ be the set consisting of $p$-faces of $\Delta^N$
and $p$-dimensional connected components of fibered products of the form
\begin{align*}
    \dotDelta^q%
    \times_{B_{i_1}^{f_1}} \overline{\mathcal{M}}(B_{i_1}^{f_1},B_{i_2}^{f_1})%
    \times_{B_{i_2}^{f_1}} \cdots%
    \times_{B_{i_{n-1}}^{f_1}} \overline{\mathcal{M}}(B_{i_{n-1}}^{f_1},B_{i_n}^{f_1})%
    \times_{B_{i_n}^{f_1}} \overline{\mathcal{M}}_H(B_{i_n}^{f_1},B_{j_1}^{f_4}) \\
    \times_{B_{j_1}^{f_4}} \overline{\mathcal{M}}(B_{j_1}^{f_4},B_{j_2}^{f_4})%
    \times_{B_{j_2}^{f_4}} \cdots%
    \times_{B_{j_{k-1}}^{f_4}} \overline{\mathcal{M}}(B_{j_{k-1}}^{f_4},B_{j_k}^{f_4}),
\end{align*}
or
\begin{align*}
    \dotDelta^q%
    \times_{B_{i_1}^{f_1}} \overline{\mathcal{M}}(B_{i_1}^{f_1},B_{i_2}^{f_1})%
    \times_{B_{i_2}^{f_1}} \cdots%
    \times_{B_{i_{n-1}}^{f_1}} \overline{\mathcal{M}}(B_{i_{n-1}}^{f_1},B_{i_n}^{f_1})%
    \times_{B_{i_n}^{f_1}} \overline{\mathcal{M}}_{F_{\heartsuit 1}}(B_{i_n}^{f_1},B_{j_1}^{f_{\heartsuit}}) \\
    \times_{B_{j_1}^{f_{\heartsuit}}} \overline{\mathcal{M}}(B_{j_1}^{f_{\heartsuit}},B_{j_2}^{f_{\heartsuit}})%
    \times_{B_{j_2}^{f_{\heartsuit}}} \cdots%
    \times_{B_{j_{k-1}}^{f_{\heartsuit}}} \overline{\mathcal{M}}(B_{j_{k-1}}^{f_{\heartsuit}},B_{j_k}^{f_{\heartsuit}})%
    \times_{B_{j_k}^{f_{\heartsuit}}} \overline{\mathcal{M}}_{F_{4\heartsuit}}(B_{j_k}^{f_{\heartsuit}},B_{r_1}^{f_4}) \\
    \times_{B_{r_1}^{f_4}} \overline{\mathcal{M}}(B_{r_1}^{f_4},B_{r_2}^{f_4})%
    \times_{B_{r_2}^{f_4}} \cdots%
    \times_{B_{r_{s-1}}^{f_4}} \overline{\mathcal{M}}(B_{r_{s-1}}^{f_4},B_{r_s}^{f_4}),
\end{align*}
where $\heartsuit\in\{2,3\}$.
Let $S^H_p=\ZZ[C^H_p]$.
For $p<0$ or $C^H_p=\emptyset$ we set $S^H_p=0$.
Then the pair $(S^H_*,\partial_*)$ is a chain complex of abstract topological chains.

A representing chain system for $H$ is also defined in the same way as in Definition \ref{definition:rep.sys.}.
Let $\heartsuit\in\{2,3\}$.
We fix good representing chain systems
$\mathcal{R}^{f_1}$, $\mathcal{R}^{f_2}$, $\mathcal{R}^{F_{\heartsuit 1}}$, and $\mathcal{R}^{F_{4\heartsuit}}$
for $f_1$, $f_2$, $F_{\heartsuit 1}$, and $F_{4\heartsuit}$, respectively.

\begin{definition}\label{definition:good family of rep. chain for H}
A representing chain system $\mathcal{R}^H$ for $H$ is called \textit{good}
if it satisfies the following conditions.

\begin{enumerate}
    \item For any $P\in C^H_p$
    there exists a unique element $(P,s_P)\in\mathcal{R}^H$
    such that $s_P$ represents the positive relative fundamental class of $P$ in $H_p(P,\partial P)$.

    \item Let $Q\in C_q^{f_1}$ be a connected component of a fibered product of the form 
    \[
       \dotDelta%
       \times_{B_{i_1}^{f_1}} \overline{\mathcal{M}}(B_{i_1}^{f_1},B_{i_2}^{f_1})%
       \times_{B_{i_2}^{f_1}} \cdots%
       \times_{B_{i_{n-1}}^{f_1}} \overline{\mathcal{M}}(B_{i_{n-1}}^{f_1},B_{i_n}^{f_1}),
    \]
    where $\dotDelta$ is a face of $\Delta^N$.
    Let $s^{f_1}_Q = \sum_{\alpha} n_{\alpha} \sigma_{\alpha} \in \mathcal{R}^{f_1}$
    be the representing chain for $Q$.
    For each $\alpha$ and any critical submanifold $B^{f_4}$ of $f_4$
    consider the fibered product $\Delta^q_{\alpha} \times_{B_{i_n}^{f_1}} \overline{\mathcal{M}}_H(B_{i_n}^{f_1},B^{f_4})$.
    If its representing chain is of the form $\sum_{\beta} m_{\beta} \tau_{\beta} \in \mathcal{R}^H$,
    then the representing chain for the fibered product
    $Q \times_{B_{i_n}^{f_1}} \overline{\mathcal{M}}_H(B_{i_n}^{f_1},B^{f_4})$
    coincides with
    \[
        s_{Q \times_{B_{i_n}^{f_1}} \overline{\mathcal{M}}_H(B_{i_n}^{f_1},B^{f_4})}%
        = \sum_{\alpha,\,\beta} n_{\alpha} m_{\beta} \bigl((\sigma_{\alpha} \times \mathrm{id}) \circ \tau_{\beta} \bigr).
    \]

    \item Let $P\in C_p^H$ be a connected component of a fibered product of the form 
    \begin{align*}
        \dotDelta^q%
        \times_{B_{i_1}^{f_1}} \overline{\mathcal{M}}(B_{i_1}^{f_1},B_{i_2}^{f_1})%
        \times_{B_{i_2}^{f_1}} \cdots%
        \times_{B_{i_{n-1}}^{f_1}} \overline{\mathcal{M}}(B_{i_{n-1}}^{f_1},B_{i_n}^{f_1})%
        \times_{B_{i_n}^{f_1}} \overline{\mathcal{M}}_H(B_{i_n}^{f_1},B_{j_1}^{f_4}) \\
        \times_{B_{j_1}^{f_4}} \overline{\mathcal{M}}(B_{j_1}^{f_4},B_{j_2}^{f_4})%
        \times_{B_{j_2}^{f_4}} \cdots%
        \times_{B_{j_{k-1}}^{f_4}} \overline{\mathcal{M}}(B_{j_{k-1}}^{f_4},B_{j_k}^{f_4}),
    \end{align*}
    where $\dotDelta$ is a face of $\Delta^N$.
    Let $s^H_P = \sum_{\alpha} n_{\alpha} \sigma_{\alpha}\in \mathcal{R}^H$
    be the representing chain for $P$.
    For each $\alpha$ and any critical submanifold $B^{f_4}$ of $f_4$
    consider the fibered product $\Delta^p_{\alpha} \times_{B_{j_k}^{f_4}} \overline{\mathcal{M}}(B_{j_k}^{f_4},B^{f_4})$.
    If its representing chain is of the form $\sum_{\beta} m_{\beta} \tau_{\beta} \in \mathcal{R}^{f_4}$,
    then the representing chain for the fibered product $P \times_{B_{j_k}^{f_4}} \overline{\mathcal{M}}(B_{j_k}^{f_4},B^{f_4})$
    coincides with
    \[
        s_{P \times_{B_{j_k}^{f_4}} \overline{\mathcal{M}}(B_{j_k}^{f_4},B^{f_4})}%
        = \sum_{\alpha,\,\beta} n_{\alpha} m_{\beta} \bigl((\sigma_{\alpha} \times \mathrm{id}) \circ \tau_{\beta} \bigr).
    \]

    \item Let $P\in C^{H} _{p}$ be a connected component of a fibered product of the form
    \begin{align*}
    \dotDelta^q%
    \times_{B_{i_1}^{f_1}} \overline{\mathcal{M}}(B_{i_1}^{f_1},B_{i_2}^{f_1})%
    \times_{B_{i_2}^{f_1}} \cdots%
    \times_{B_{i_{n-1}}^{f_1}} \overline{\mathcal{M}}(B_{i_{n-1}}^{f_1},B_{i_n}^{f_1})%
    \times_{B_{i_n}^{f_1}} \overline{\mathcal{M}}_{F_{\heartsuit 1}}(B_{i_n}^{f_1},B_{j_1}^{f_{\heartsuit}}) \\
    \times_{B_{j_1}^{f_{\heartsuit}}} \overline{\mathcal{M}}(B_{j_1}^{f_{\heartsuit}},B_{j_2}^{f_{\heartsuit}})%
    \times_{B_{j_2}^{f_{\heartsuit}}} \cdots%
    \times_{B_{j_{k-1}}^{f_{\heartsuit}}} \overline{\mathcal{M}}(B_{j_{k-1}}^{f_{\heartsuit}},B_{j_k}^{f_{\heartsuit}})%
    \times_{B_{j_k}^{f_{\heartsuit}}} \overline{\mathcal{M}}_{F_{4\heartsuit}}(B_{j_k}^{f_{\heartsuit}},B_{r_1}^{f_4}) \\
    \times_{B_{r_1}^{f_4}} \overline{\mathcal{M}}(B_{r_1}^{f_4},B_{r_2}^{f_4})%
    \times_{B_{r_2}^{f_4}} \cdots%
    \times_{B_{r_{s-1}}^{f_4}} \overline{\mathcal{M}}(B_{r_{s-1}}^{f_4},B_{r_s}^{f_4}).
    \end{align*}
    Then its representing chain $s_P$ is compatible with the good representing chain systems
    $\mathcal{R}^{f_1}$, $\mathcal{R}^{F_{\heartsuit 1}}$, $\mathcal{R}^{f_\heartsuit}$, and $\mathcal{R}^{F_{4\heartsuit}}$.
\end{enumerate}
\end{definition}

As with Lemmas \ref{lemma:existence of a good rep. chains} and \ref{lemma:exsitence of a good rep. chains for F},
one has the following.

\begin{lemma}\label{lemma:exsitence of a good rep. chains for H}
There exists a good representing chain system for $H$.
\end{lemma}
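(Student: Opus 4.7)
The plan is to mimic the proof of Lemma \ref{lemma:exsitence of a good rep. chains for F}, but now I must handle the two distinct families of elements that make up $C^H_p$: the ones involving a single factor $\overline{\mathcal{M}}_H$, and the ones involving a factor $\overline{\mathcal{M}}_{F_{\heartsuit 1}}$ together with a factor $\overline{\mathcal{M}}_{F_{4\heartsuit}}$ for $\heartsuit\in\{2,3\}$. I will construct a representing chain for every $P\in C^H_p$ by a double induction: primarily on the total number of moduli-space factors in the fibered product description of $P$, and secondarily on $p=\deg P$, beginning with the $p$-faces $\dotDelta^p$ (for which the standard embedding $\iota\colon\Delta^p\to\dotDelta^p$ is chosen, as before).

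First I would dispose of the elements of the second type. For each such $P$, condition (iv) of Definition \ref{definition:good family of rep. chain for H} prescribes $s_P$ completely from the already fixed good systems $\mathcal{R}^{f_1}$, $\mathcal{R}^{F_{\heartsuit 1}}$, $\mathcal{R}^{f_\heartsuit}$ and $\mathcal{R}^{F_{4\heartsuit}}$: iterating the formula $\sum_{\alpha,\beta}n_\alpha m_\beta\bigl((\sigma_\alpha\times\mathrm{id})\circ\tau_\beta\bigr)$ across each of the $B^{f_1}_{i_n}$-, $B^{f_\heartsuit}_{j_k}$-, and $B^{f_4}_{r_1}$-fibered products yields $s_P$. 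There is nothing to choose here; what must be checked is that the resulting chain represents the positive relative fundamental class of $P$ and that condition (ii) of Definition \ref{definition:rep.sys.} is satisfied. Both follow from Lemma \ref{lemma:orientations are associative} together with Definition \ref{definition:orientation of fiber product}, which guarantee that fibered products of representing chains multiply (relative) fundamental classes in the expected way.

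Next I would construct representing chains for elements of the first type. The base case is a connected component of $\dotDelta^0\times_{B^{f_1}_i}\mathcal{M}_H(B^{f_1}_i,B^{f_4})$, where no boundary of $\overline{\mathcal{M}}_H$ is involved; any triangulation representing the positive fundamental class may be chosen. Inductively, a representing chain for $\dotDelta^q\times_{B^{f_1}_i}\overline{\mathcal{M}}_H(B^{f_1}_i,B^{f_4})$ is obtained by extending the already chosen triangulation of the boundary to the interior, exploiting that the connecting homomorphism $H_q(P,\partial P)\to H_{q-1}(\partial P)$ sends the relative fundamental class to the fundamental class of $\partial P$ (see \cite[Lemma VI.9.1]{Br93} or \cite[Theorem 7.6]{Ma91}). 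Having done so, representing chains for the longer fibered products of the first type (those with pre- and post-factors $\overline{\mathcal{M}}(B^{f_1}_{i_j},B^{f_1}_{i_{j+1}})$ and $\overline{\mathcal{M}}(B^{f_4}_{j_k},B^{f_4}_{j_{k+1}})$) are then forced by conditions (ii) and (iii) of Definition \ref{definition:good family of rep. chain for H}, exactly as in the proofs of Lemmas \ref{lemma:existence of a good rep. chains} and \ref{lemma:exsitence of a good rep. chains for F}.

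The delicate point, and the main obstacle, arises in the inductive step for $\overline{\mathcal{M}}_H$: its boundary decomposition includes the "splitting at infinity" pieces $\overline{\mathcal{M}}_{F_{\heartsuit 1}}\times_{B^{f_\heartsuit}}\overline{\mathcal{M}}_{F_{4\heartsuit}}$, whose representing chains are rigidly prescribed by the first paragraph above, and also the propagation pieces $\overline{\mathcal{M}}\times\overline{\mathcal{M}}_H$ and $\overline{\mathcal{M}}_H\times\overline{\mathcal{M}}$, whose representing chains have already been fixed by earlier stages of the induction. I must verify that, with the signs dictated by the boundary formula for $\overline{\mathcal{M}}_H$, these pre-assigned chains fit together into a singular cycle that represents the fundamental class of the topological boundary $\partial\bigl(\dotDelta^q\times_{B^{f_1}_i}\overline{\mathcal{M}}_H(B^{f_1}_i,B^{f_4})\bigr)$; this is where the careful orientation conventions of Definition \ref{definition:orientation of fiber product} and Lemma \ref{lemma:orientations are associative} do all the real work. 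Once this compatibility is in hand, the extension to the interior exists, and iterating through the double induction completes the construction of $\mathcal{R}^H$.
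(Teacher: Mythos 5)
Your proposal follows the paper's (very terse) indication — mimic the proofs of Lemmas~\ref{lemma:existence of a good rep. chains} and~\ref{lemma:exsitence of a good rep. chains for F} — and the essential ingredients are all present: the separation of $C^H_p$ into the two families, the observation that representing chains for the $\overline{\mathcal{M}}_{F_{\heartsuit 1}}\times\overline{\mathcal{M}}_{F_{4\heartsuit}}$--type pieces are forced by Definition~\ref{definition:good family of rep. chain for H}(iv) and the previously fixed systems, the extension-of-triangulations step for the $\overline{\mathcal{M}}_H$--type pieces, and the final compatibility check. This is the same approach the paper gestures at.

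However, the stated induction order is backwards and, taken literally, does not close. You propose to induct ``primarily on the total number of moduli-space factors.'' But the boundary of a one-factor piece $\dotDelta^q\times_{B_i^{f_1}}\overline{\mathcal{M}}_H(B_i^{f_1},B_j^{f_4})$ contains two-factor strata of the form $\dotDelta^q\times_{B_i^{f_1}}\overline{\mathcal{M}}(B_i^{f_1},B_n^{f_1})\times_{B_n^{f_1}}\overline{\mathcal{M}}_H(B_n^{f_1},B_j^{f_4})$ and $\dotDelta^q\times_{B_i^{f_1}}\overline{\mathcal{M}}_H(B_i^{f_1},B_n^{f_4})\times_{B_n^{f_4}}\overline{\mathcal{M}}(B_n^{f_4},B_j^{f_4})$, so the number of moduli-space factors strictly \emph{increases} for boundary strata. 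The quantity that actually decreases, and should be the primary parameter, is the degree $p=\deg P$ (or a relative-index quantity playing the role of the $d$ in the proof of Lemma~\ref{lemma:existence of a good rep. chains}); within each degree one then treats the single-$\overline{\mathcal{M}}_H$ pieces first and lets conditions (ii)--(iv) of Definition~\ref{definition:good family of rep. chain for H} propagate the choice to the longer products. A related slip is the description of the base case: $\overline{\mathcal{M}}_H$ almost never has empty boundary, because the splitting-at-infinity strata $\overline{\mathcal{M}}_{F_{\heartsuit 1}}\times_{B^{f_{\heartsuit}}}\overline{\mathcal{M}}_{F_{4\heartsuit}}$ are generically present; the genuine base of the induction is the zero-dimensional fibered products. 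Your third and fourth paragraphs, read charitably, do process the pieces in the right sequence, so these are errors of formulation rather than missing ideas — but as written the induction scheme must be re-ordered before the proof is sound.
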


We fix a good representing chain system $\mathcal{R}^H$ for $H$.
We define a homomorphism
\[
    H_{\#}\colon (\CB_*(f_1,\mathcal{R}^{f_1}),\bm{\partial}_*) \to (\CB_*(f_4,\mathcal{R}^{f_4}),\bm{\partial}_*)
\]
as follows.
Let $\sigma\colon P\to B_i^{f_1}$ be a singular $C^{f_1}_p$-space in $B_i^{f_1}$.
For each $j=0$,~1, \dots,~$\dim{M}$
let $\sum_{\alpha} n^j_{\alpha} \sigma^j_{\alpha} \in \mathcal{R}^H$
be the representing chain for the fibered product $P\times_{B_i^{f_1}} \overline{\mathcal{M}}_H(B_i^{f_1},B_j^{f_4})$.
We then define
\begin{align*}
    H^j(\sigma) = \sum_{\alpha} n^j_{\alpha} (\mathrm{ev}_+\circ\mathrm{pr}\circ\sigma^j_{\alpha})%
    \quad \text{and}\quad
    H_{\#}(\sigma) = \bigoplus_{j=0}^{\dim{M}} H^j(\sigma).
\end{align*}

A similar argument to the proof \cite[Theorem 6.13]{BH10} and
the third condition of Definition \ref{definition:good family of rep. chain for H} shows the following lemma.

\begin{lemma}\label{lemma:chain homotopy}
The map $H_{\#} \colon (\CB_*(f_1,\mathcal{R}^{f_1}),\bm{\partial}_*) \to (\CB_*(f_4,\mathcal{R}^{f_4}),\bm{\partial}_*)$ satisfies
\[
    (F_{43})_{\#} \circ (F_{31})_{\#} - (F_{42})_{\#} \circ (F_{21})_{\#} = \partial H_{\#} + H_{\#} \partial.
\]
\end{lemma}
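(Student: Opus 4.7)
The strategy is to mimic the proof of \cite[Theorem 6.13]{BH10}, but using the compatibility built into our good representing chain systems (Definitions \ref{definition:good rep.sys.}, \ref{definition:good family of rep. chain for F}, and \ref{definition:good family of rep. chain for H}) in place of Banyaga--Hurtubise's five degeneracy relations. The key input is the boundary formula for $\overline{\mathcal{M}}_H(B_i^{f_1},B_j^{f_4})$ stated just before Definition \ref{definition:good family of rep. chain for H}, which already lists precisely the four types of broken trajectories needed to produce the four terms of the claimed identity.

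First, I would fix a singular $C^{f_1}_p$-space $\sigma\colon P\to B_i^{f_1}$ in $\mathcal{S}_p(B_i^{f_1})$, and separately unpack $\bm{\partial}\circ H_{\#}(\sigma)$ and $H_{\#}\circ\bm{\partial}(\sigma)$ as elements of $\bigoplus_{j}\mathcal{C}_{*}(B_j^{f_4})$. Each component is, by definition, a sum over representing chains of fibered products of the form $Q\times_{B^{f_1}_{\ast}}\overline{\mathcal{M}}_H(B^{f_1}_{\ast},B_j^{f_4})$, $P\times_{B_i^{f_1}}\overline{\mathcal{M}}_H(B_i^{f_1},B^{f_4}_{\ast})\times_{B^{f_4}_{\ast}}\overline{\mathcal{M}}(B^{f_4}_{\ast},B_j^{f_4})$, and so on, composed with $\mathrm{ev}_+\circ\mathrm{pr}$. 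By applying the Leibniz-type rule for $\bm{\partial}$ on fibered products (Definition \ref{definition:boundary operator to fiber product}, extended in \eqref{eq:boundary_op}) and using the compatibility clauses (2) and (3) of Definition \ref{definition:good family of rep. chain for H}, the contributions $\partial\bigl(P\times_{B_i^{f_1}}\overline{\mathcal{M}}_H(B_i^{f_1},B_j^{f_4})\bigr)$ and $\partial(P)\times_{B_i^{f_1}}\overline{\mathcal{M}}_H$ in the sum $\bm{\partial}H_{\#}(\sigma)+H_{\#}\bm{\partial}(\sigma)$ will reassemble into
\[
    P\times_{B_i^{f_1}}\partial\bigl(\overline{\mathcal{M}}_H(B_i^{f_1},B_j^{f_4})\bigr)
\]
modulo degenerate chains, with signs controlled by Lemma \ref{lemma:orientations are associative} and Definition \ref{definition:orientation of fiber product}.

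Next, I would substitute the boundary formula for $\overline{\mathcal{M}}_H(B_i^{f_1},B_j^{f_4})$. The third and fourth sums in that formula cancel exactly with $H_{\#}\bm{\partial}(\sigma)$ and the ``interior'' part of $\bm{\partial}H_{\#}(\sigma)$ that comes from $\partial_{[0]}$ and from $\partial_{[k]}$ with $k\ge 1$ inside the $f_1$- and $f_4$-moduli spaces, again up to degenerate chains. What remains are precisely the first two sums, i.e.,
\[
    \sum_n\overline{\mathcal{M}}_{F_{21}}(B_i^{f_1},B_n^{f_2})\times_{B_n^{f_2}}\overline{\mathcal{M}}_{F_{42}}(B_n^{f_2},B_j^{f_4})
\]
and its $f_3$-analogue with opposite sign. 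At this point, clause (4) of Definition \ref{definition:good family of rep. chain for H} is essential: it ensures that the representing chain chosen for each such two-step fibered product in $\mathcal{R}^H$ agrees (as a singular simplicial chain, and in particular modulo $\mathcal{D}$) with the composition of the representing chains selected independently in $\mathcal{R}^{F_{21}}$ and $\mathcal{R}^{F_{42}}$ (respectively $\mathcal{R}^{F_{31}}$ and $\mathcal{R}^{F_{43}}$). This compatibility is what lets us identify the first sum with $(F_{42})_{\#}\circ(F_{21})_{\#}(\sigma)$ and the second with $(F_{43})_{\#}\circ(F_{31})_{\#}(\sigma)$.

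The main obstacle is precisely this sign and matching bookkeeping: there are four kinds of terms to track, each with a sign coming from Definition \ref{definition:orientation of fiber product} and from the $(-1)^{i+b_i^{f_1}}$ prefactor in the boundary of $\overline{\mathcal{M}}_H$, and one must verify that the overall sign on the two composed-chain-map terms is $+$ and $-$, matching $(F_{43})_{\#}\circ(F_{31})_{\#}-(F_{42})_{\#}\circ(F_{21})_{\#}$. I would handle this by running the same sign verification as in \cite[Proposition 6.11 and Theorem 6.13]{BH10}, since our orientation conventions (Definition \ref{definition:orientation of fiber product}) and our boundary operator \eqref{eq:boundary_op} are chosen to agree with theirs. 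The only genuinely new step beyond \cite{BH10} is the appeal to clause (4) of Definition \ref{definition:good family of rep. chain for H} to descend the equality from the level of singular topological chains to the quotient $\CB_*(f_4,\mathcal{R}^{f_4})$, and this is exactly the reason the ``good'' condition was imposed on $\mathcal{R}^H$.
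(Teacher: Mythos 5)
Your proposal is correct and follows essentially the same route as the paper, which itself reduces the proof to a one-line pointer to \cite[Theorem 6.13]{BH10} augmented by the compatibility built into the good representing chain system $\mathcal{R}^H$. Your expansion is a faithful unpacking: compute $\bm{\partial} H_{\#} + H_{\#}\bm{\partial}$ via the Leibniz rule, reassemble it into $P\times_{B_i^{f_1}}\partial\bigl(\overline{\mathcal{M}}_H(B_i^{f_1},B_j^{f_4})\bigr)$ modulo degenerate chains, and then substitute the stated boundary formula so that the third and fourth families of boundary strata cancel while the first two give the compositions of $(F_{\heartsuit 1})_{\#}$ and $(F_{4\heartsuit})_{\#}$. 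One small point worth flagging: the paper's text singles out ``the third condition'' of Definition \ref{definition:good family of rep. chain for H} (mirroring the phrasing used for the chain map in Lemma \ref{lemma:chain map between MB multi-complexes}, where condition (iii) of Definition \ref{definition:good family of rep. chain for F} is indeed the decisive one), whereas you correctly observe that condition (iv) is the genuinely new ingredient needed to identify the $\overline{\mathcal{M}}_{F_{\heartsuit 1}}\times_{B^{f_\heartsuit}}\overline{\mathcal{M}}_{F_{4\heartsuit}}$ boundary strata with $(F_{4\heartsuit})_{\#}\circ(F_{\heartsuit 1})_{\#}$ up to degenerate chains; conditions (ii) and (iii) are needed in the reassembly step, as you also use, so all of (ii)--(iv) enter, and the paper's emphasis on (iii) alone reads as terse or slightly miscopied rather than as a different argument.
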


Now the exactly same argument in \cite[Corollaries 6.14--6.16]{BH10} completes the proof of Theorem \ref{theorem:MB homology theorem}.







\bibliographystyle{amsalpha}
\bibliography{orita_bibtex}

\providecommand{\bysame}{\leavevmode\hbox to3em{\hrulefill}\thinspace}
\providecommand{\MR}{\relax\ifhmode\unskip\space\fi MR }
\providecommand{\MRhref}[2]{%
  \href{http://www.ams.org/mathscinet-getitem?mr=#1}{#2}
}
\providecommand{\href}[2]{#2}
\begin{thebibliography}{BH04b}

\bibitem[AB95]{AB95}
D.~M. Austin and P.~J. Braam, \emph{Morse-{B}ott theory and equivariant cohomology}, The {F}loer memorial volume, Progr. Math., vol. 133, Birkh\"auser, Basel, 1995, pp.~123--183. \MR{1362827}

\bibitem[BH04a]{BH04a}
Augustin Banyaga and David Hurtubise, \emph{Lectures on {M}orse homology}, Kluwer Texts in the Mathematical Sciences, vol.~29, Kluwer Academic Publishers Group, Dordrecht, 2004. \MR{2145196}

\bibitem[BH04b]{BH04b}
Augustin Banyaga and David~E. Hurtubise, \emph{A proof of the {M}orse-{B}ott lemma}, Expo. Math. \textbf{22} (2004), no.~4, 365--373. \MR{2075744}

\bibitem[BH10]{BH10}
\bysame, \emph{Morse-{B}ott homology}, Trans. Amer. Math. Soc. \textbf{362} (2010), no.~8, 3997--4043. \MR{2608393}

\bibitem[BH25]{BH25}
\bysame, private communication, 2025.

\bibitem[Bot54]{Bo54}
Raoul Bott, \emph{Nondegenerate critical manifolds}, Ann. of Math. (2) \textbf{60} (1954), 248--261. \MR{64399}

\bibitem[Bou02]{Bourgeois2002}
Frederic Bourgeois, \emph{A {M}orse-{B}ott approach to contact homology}, ProQuest LLC, Ann Arbor, MI, 2002, Thesis (Ph.D.)--Stanford University. \MR{2703292}

\bibitem[Bre93]{Br93}
Glen~E. Bredon, \emph{Topology and geometry}, Graduate Texts in Mathematics, vol. 139, Springer-Verlag, New York, 1993. \MR{1224675}

\bibitem[Fra03]{Fr03}
Urs~Adrian Frauenfelder, \emph{Floer homology of symplectic quotients and the {A}rnold-{G}ivental conjecture}, ProQuest LLC, Ann Arbor, MI, 2003, Thesis (Dr.sc.math.)--Eidgenoessische Technische Hochschule Zuerich (Switzerland). \MR{2715556}

\bibitem[Fuk96]{Fu96}
Kenji Fukaya, \emph{Floer homology of connected sum of homology {$3$}-spheres}, Topology \textbf{35} (1996), no.~1, 89--136. \MR{1367277}

\bibitem[Nie81]{Ni82}
Lars~Tyge Nielsen, \emph{Transversality and the inverse image of a submanifold with corners}, Math. Scand. \textbf{49} (1981), no.~2, 211--221. \MR{661891}

\bibitem[Zho24]{Zhou2024}
Zhengyi Zhou, \emph{Morse-{B}ott cohomology from homological perturbation theory}, Algebr. Geom. Topol. \textbf{24} (2024), no.~3, 1321--1429. \MR{4767874}

\end{thebibliography}
\end{document}